\newtheorem{theorem}{Theorem}[section]
\newtheorem{lemma}[theorem]{Lemma}
\newtheorem{corollary}[theorem]{Corollary}
\newtheorem{proposition}[theorem]{Proposition}
\theoremstyle{remark}
\theoremstyle{definition}
\numberwithin{equation}{section} \makeatother
\DeclareMathOperator{\spn}{Span} 
\DeclareMathOperator{\Kdb}{{\mathbb K}}
\DeclareMathOperator{\Cdb}{{\mathbb C}}
\DeclareMathOperator{\Rdb}{{\mathbb R}}
\DeclareMathOperator{\Tdb}{{\mathbb T}}
\DeclareMathOperator{\Ndb}{{\mathbb N}}
\begin{document}

\title[Operator algebras with cai II]{Operator algebras with contractive approximate identities II}
\author{David P. Blecher}
\author{Charles John Read}

\address{Department of Mathematics, University of Houston, Houston, TX
77204-3008}
 \email[David P.
Blecher]{dblecher@math.uh.edu}
\date{Revision of November 15, 2012}
\address{Department of Pure Mathematics,
University of Leeds,
Leeds LS2 9JT,
England}
 \email[Charles John Read]{read@maths.leeds.ac.uk}
\thanks{*Blecher was partially supported by a grant  from
the National Science Foundation.}

\begin{abstract}
We make several contributions to our recent program investigating structural properties of algebras of operators on a Hilbert space.  For example, we make substantial contributions to the  noncommutative peak interpolation program begun by Hay and the first author, Hay and Neal.  Another
sample result: an operator algebra has a  contractive approximate identity iff the linear span of the elements with positive real part is dense.   We also extend the theory of compact projections to the most general case.  Despite the title, our algebras are often allowed to have 
no approximate identity.
\end{abstract}

\subjclass[2010]{Primary 46L85, 46L52, 47L30, 46L07;
Secondary   32T40, 46H10, 47L50, 47L55} 

\maketitle

\section{Introduction}  For us, an operator algebra is a norm closed algebra of operators on a Hilbert space.
In a long series of papers (e.g.\ \cite{BHN,ABS,BRead,BNII,ABR,Read}), we and coauthors have investigated 
structural properties of approximately unital 
operator algebras  in terms of certain one-sided ideals, hereditary subalgebras
(HSA's), noncommutative topology (open and closed projections), etc.  Most recently, we have proposed the set 
$\frac{1}{2} {\mathfrak F}_A = \{ x \in A : \Vert 1 - 2 x \Vert_{A^1} \leq 1 \}$,
where $A^1$ is the unitization of an operator algebra $A$,
 as the analogue of the positive part of the unit ball of a 
$C^*$-algebra, and this approach is proving fruitful \cite{BRead, BNI, BNII}.
In the present paper we advance this program in 
  several ways.
For example, in Sections 2 and 5 we tackle two important and related
topics.  First, we show that not all right
ideals having a contractive approximate identity (cai) 
 in a unital operator algebra are proximinal (we recall that $J$ is 
proximinal in $A$ if
the distance $d(x,J)$ is achieved for all $x \in A$).  Our example is quite simple, and 
solves a  question that dates to the time of \cite{BEZ} (we recall the 
useful fact that two-sided ideals with cai, and more generally all $M$-ideals, 
are proximinal \cite{HWW}; as are closed one-sided ideals in $C^*$-algebras (see e.g.\ \cite{Brown,Ki}), and this
is extremely important, as may be seen for example in the use 
made of proximinality in the latter references).  
As a complement, we give a
very natural  sufficient condition that 
does ensure best approximation in such a right ideal.   These allow us to make some substantial advances, and also to  put some boundaries  in place, in the 
 noncommutative peak interpolation
 program begun by Hay in his Ph.D.\ thesis
\cite{Hayth}, and the first author, Hay and Neal \cite{BHN,Hay}. 
That is, we close in on the possible noncommutative generalizations of classical facts
such as: if $A$ is a 
function algebra on a compact space $K$ and if $E$ is {\em peak set} in $K$ (defined below), then the
functions  in $C(E)$ which are restrictions  of functions in $A$ to $E$, have norm preserving
extensions in $A$.  More generally, if $f$ is
a strictly positive function on $K$, then the continuous functions  on $E$  which are restrictions of
functions in $A$, and which are dominated
by $f$ on $E$, have extensions $h$ in $A$ satisfying $|h| \leq f$ on all of $K$ 
(see e.g.\ II.12.5 in \cite{Gam}).  In particular we give (in Sections 2, 5 and 6)
several new noncommutative peak interpolation results which
generalize such function algebra results.   We remark that after this paper
had been refereed we discovered one of our best
noncommutative peak interpolation result, a generalization of the just mentioned
classical result involving $h$ and $f$,  which we state
here as Theorem \ref{babint}.
  Moreover the new idea in the latter result yields  
short proofs \cite{Bnew} both of Read's theorem on approximate identities \cite{Read}, and
Hay's main theorem in \cite{Hay,Hayth}.   These are absolutely foundational results in
the subject, and the extreme depth of their proofs hindered their accessibility 
until now. 

In Section 3 we prove some results that should allow many facts from  
\cite{BRead, BNI, BNII}  to be generalized
by widening the set 
 $\frac{1}{2} {\mathfrak F}_A$  to ${\mathfrak r}_A = \{ a \in A :  {\rm Re}(a) =  a  +a^* \geq 0 \}$.
This will have implications to our study of operator algebras in terms of a new kind of positivity,
and we are investigating this in independent works.   This does not mean that the ${\mathfrak F}_A$
approach has been superseded;  however using   ${\mathfrak r}_A$ instead
will be adequate and simpler for certain applications.
In Section 4 we present a mechanism to 
extend some of our results on  approximately unital operator 
algebras to algebras with no kind
of approximate identity.   Indeed such an algebra has a HSA 
containing all other subalgebras having a contractive approximate identity
(cai), and this HSA is the closure of the {\em linear} span of $\frac{1}{2} {\mathfrak F}_A$.
 Thus an operator algebra $A$ has a cai  
 iff the linear span of ${\mathfrak F}_A$ or  ${\mathfrak r}_A$ is dense.  In Section 6 we turn
to noncommutative topology, and generalize the compact projections in the 
sense of the recent paper \cite{BNII}, and their theory, to algebras with no approximate identity.  We also identify the
right  ideals which correspond to
 compact projections.    

As with \cite{BRead}, many of our results apply immediately to 
give new results for function algebras, but we will not take the time
to point these out. 
 
We now turn to notation and more precise definitions.
The reader is referred for example to \cite{BLM,BHN,BRead} for more details on
some of the topics below if needed.   By an {\em ideal} of an operator algebra $A$ we shall
always mean a closed two-sided ideal in $A$.   For us a {\em projection}
is always an orthogonal projection.
 The letter $H$ is reserved for Hilbert spaces.
If $E, F$ are sets, then $EF$ denotes the
closure of the span of products of the form $xy$ for $x \in E, y \in F$.   
We often use silently the fact from basic analysis that
$X^{\perp \perp}$ is the weak* closure in $Y^{**}$ of
a subspace  $X \subset 
Y$.   We recall that by a theorem due to  Ralf Meyer,
every operator algebra $A$ has a unique unitization $A^1$ (see
 \cite[Section 2.1]{BLM}). Below $1$ always refers to
the identity of $A^1$ if $A$ has no identity (we set $A^1 = A$ if $A$ has an identity of norm $1$).  If $A$ is a nonunital
operator algebra represented (completely) isometrically on a Hilbert
space $H$ then one may identify $A^1$ with $A + \Cdb I_H$.   The
second dual $A^{**}$ is also an operator algebra with its (unique)
Arens product, this is also the product inherited from the von Neumann
algebra $B^{**}$ if
$A$ is a subalgebra of a $C^*$-algebra $B$.     The `meet' or `join' in
$B^{**}$ of projections in $A^{**}$ remains in $A^{**}$ 
(since for example if $p,q$ are projections in $A^{**}$
then it is well known that $p \wedge q$ (resp.\ $p \vee q$) is a weak* limit of $(pq)^n$
(resp.\ of polynomials in $p$ and $q$ with no constant term), which stays inside 
the von Neumann algebra
$\{ x \in A^{\perp \perp} : x^* \in A^{\perp \perp} \} \subset B^{**}$).   Note that
$A$ has a  contractive approximate identity (cai) iff $A^{**}$ has an identity $1_{A^{**}}$ of norm $1$,
and then $A^1$ is sometimes identified with $A + \Cdb 1_{A^{**}}$.
Often our algebras 
and ideals  are {\em approximately unital}, i.e.\ have a cai; when this
is the case we will say so.

We recall that an {\em r-ideal} is a right ideal with a left cai, and an {\em $\ell$-ideal} is a left ideal with a right cai.
We say that an operator algebra $D$ with cai, which is a subalgebra of
another operator algebra $A$, is a HSA (hereditary subalgebra)
in $A$, if $DAD \subset D$.    See
\cite{BHN} for the theory of HSA's (a few more results may be found in \cite{ABS,BRead}).
HSA's in $A$ are in an order preserving,
bijective correspondence with the r-ideals in $A$, and 
 with the $\ell$-ideals in $A$.   Because of this symmetry 
we will usually restrict our results to the r-ideal case; the $\ell$-ideal case will be analogous.
There is also a bijective correspondence with the  
{\em open projections} $p \in A^{**}$, by which we mean that there
is a net $x_t \in A$ with $x_t = p x_t  \to p$ weak*, 
or equivalently with $x_t = p x_t p \to p$ weak* (see  \cite[Theorem 2.4]{BHN}).  These are
also the open projections $p$ in the sense of Akemann \cite{Ake2} in $B^{**}$, where $B$ is a $C^*$-algebra containing $A$, such that
$p \in A^{\perp \perp}$.   If $A$ is approximately unital then the complement $p^\perp = 1_{A^{**}} - p$
 of an open projection for $A$
 is called a {\em closed projection} for $A$.  We spell out some of the correspondences above:
if $D$ is a HSA in $A$, then $DA$ (resp.\ $AD$) is the matching  r-ideal (resp.\ $\ell$-ideal),
and $D = (DA)(AD) = (DA) \cap (AD)$.
The weak* limit of a cai for $D$, or of a left cai for an r-ideal, is
an open projection, and is called the {\em support projection}.  
Conversely, if $p$ is an open projection in $A^{**}$, then
$pA^{**} \cap A$ and $pA^{**}p \cap A$ is 
the matching r-ideal and HSA pair
in $A$.    It is  well known that the closure $J$ of a sum of r-ideals $J_i$ is an r-ideal,
but for the readers convenience we include a proof (using basic facts from e.g.\ \cite[Section 2.5]{BLM}).
Obviously $J$ is  a right ideal, and $J^{\perp \perp}$ is the weak* closure $K$ of the span of $e_i A^{**}$, 
where $e_i$ is the  support projection of $J_i$.  If $p = \vee_i \, e_i$, then
 $K \subset p A^{**}$ since $e_i \leq p$.
Conversely, $p \in J^{\perp \perp}$ by a remark above about
meets and joins, and since $J^{\perp \perp}$ is a weak* closed right ideal
we have $p A^{**} \subset K$.  So $J^{\perp \perp} = p A^{**}$ has left identity $p$
and it follows from e.g.\ \cite[Proposition 2.5.8]{BLM} that $J$ is an r-ideal (with support projection
$p$).  
 
Let $A$ be an operator algebra.   If $x \in A$ then oa$(x)$ denotes the closed subalgebra 
of $A$ generated by $a$. The set
${\mathfrak F}_A = \{ x \in A : \Vert 1 - x \Vert \leq 1 \}$ equals
$\{ x \in A : \Vert 1 - x \Vert_{A^1} = 1 \}$ if $A$ is nonunital, whereas
if $A$ is unital then ${\mathfrak F}_A = 1 + {\rm Ball}(A)$.  Many properties of ${\mathfrak F}_A$ are 
developed in \cite{BRead}.  An $x \in A$ is in ${\mathfrak F}_A$ iff $x + x^* \geq x^* x$;
and $x \in \Rdb^+  {\mathfrak F}_A$ iff there is a constant $C > 0$ with  $x + x^* \geq C x^* x$.
 If $A$ is a closed subalgebra of an operator algebra $B$
then it is easy to see, using the
uniqueness of the unitization, that ${\mathfrak F}_A = A \cap {\mathfrak F}_B$.  
 If $x \in \frac{1}{2} {\mathfrak F}_A$
then $\Vert x \Vert \leq 1$ and $\Vert 1- x \Vert \leq 1$.   The following
is a slight rephrasing of \cite[Theorem 2.12]{BRead}, and it follows from its proof (we remark that there 
is a typo in the parallelogram law stated in that proof, the quantity after the `$=$' should be 
multiplied by $2$):

\begin{lemma}  \label{abr}  Let $T$ be an operator in
 $B(H)$ with $\Vert I - T \Vert \leq 1$.  Then  $T$ is not invertible if
and only if
$\Vert I - \frac{1}{2} T \Vert = 1$.
Also, $T$ is invertible iff $T$ is invertible in the closed
algebra generated by $I$ and $T$, and iff  {\rm oa}$(T)$ contains $I$.
Here $I = I_H$.
 \end{lemma}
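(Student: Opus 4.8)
The plan is to handle the two assertions separately: the first is a short computation, and the second combines classical one-variable spectral theory with the geometric restriction that $\|I - T\| \leq 1$ places on $\sigma(T)$.

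For the first assertion, I would first note that $\|I - \frac{1}{2} T\| \leq 1$ holds automatically, since $I - \frac{1}{2} T = \frac{1}{2} I + \frac{1}{2}(I - T)$. Next I would recall that $\|I - T\| \leq 1$ is equivalent to $T^* T \leq T + T^*$ (expand $(I - T)^*(I - T) \leq I$), and compute $(I - \frac{1}{2} T)^*(I - \frac{1}{2} T) = I - \frac{1}{2}(T + T^* - T^* T) - \frac{1}{4} T^* T \leq I - \frac{1}{4} T^* T$. Hence if $T$ is invertible then $T^* T \geq c I$ for some $c > 0$, so $\|I - \frac{1}{2} T\|^2 \leq 1 - \frac{c}{4} < 1$; conversely, if $\|I - \frac{1}{2} T\| < 1$ then $\frac{1}{2} T = I - (I - \frac{1}{2} T)$ is invertible by a Neumann series, hence so is $T$. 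Since $\|I - \frac{1}{2} T\| \leq 1$ always, this is precisely the statement that $T$ is non-invertible iff $\|I - \frac{1}{2} T\| = 1$.

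For the second assertion, let $C$ denote the closed unital subalgebra of $B(H)$ generated by $I$ and $T$, that is, the norm closure of the polynomials in $T$; then $\mathrm{oa}(T)$ is the norm closure of the polynomials in $T$ with zero constant term, equivalently the closure of $\{ T q(T) : q \text{ a polynomial} \}$. Since ``$T$ invertible in $C$'' means exactly ``$T^{-1} \in C$'', and this trivially forces $T$ invertible in $B(H)$, it suffices to establish the cycle $T$ invertible $\Rightarrow T^{-1} \in C \Rightarrow I \in \mathrm{oa}(T) \Rightarrow T$ invertible. The first implication is where I would use the hypothesis: $\|I - T\| \leq 1$ forces $\sigma(T) \subseteq \{ z \in \mathbb{C} : |z - 1| \leq 1 \}$, a closed disc contained in the right half-plane and meeting the imaginary axis only at $0$; so if $T$ is invertible, $0$ lies in the unbounded component of $\mathbb{C} \setminus \sigma(T)$ (connect $0$ to $-\infty$ along the negative real axis, through a small disc about $0$ missing $\sigma(T)$). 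Then, by the standard fact that the resolvent $(\lambda - T)^{-1}$ at points $\lambda$ of the unbounded component of the resolvent set is a norm limit of polynomials in $T$ (an open-and-closed argument in that component, seeded by the Neumann series for large $|\lambda|$), I conclude $-T^{-1} = (0 - T)^{-1} \in C$. For the second implication, I would write $T^{-1} = \lim_n p_n(T)$ with $p_n$ polynomials, so $I = T\, T^{-1} = \lim_n T p_n(T)$ with each $T p_n(T)$ a polynomial in $T$ having zero constant term, whence $I \in \mathrm{oa}(T)$. For the third, suppose $T$ is not invertible; then $T$ or $T^*$ fails to be bounded below, so there are unit vectors $\zeta_n \in H$ with $\|T \zeta_n\| \to 0$, or unit vectors $\eta_n \in H$ with $\|T^* \eta_n\| \to 0$. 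Choosing a polynomial $p$ with $p(0) = 0$ and $\|I - p(T)\| < 1$ (possible as $I \in \mathrm{oa}(T)$) and writing $p(z) = z q(z)$, I get $p(T) \zeta_n = q(T) T \zeta_n \to 0$ in the first case and $p(T)^* \eta_n = \overline{q}(T^*) T^* \eta_n \to 0$ in the second; then $1 = \|\zeta_n\| \leq \|I - p(T)\| + \|p(T) \zeta_n\|$ (respectively the same with $\eta_n$ and adjoints) is $< 1$ for large $n$, a contradiction, so $T$ is invertible.

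The one genuinely nonroutine step is the approximation of $-T^{-1}$ by polynomials in $T$, and this is exactly where $\|I - T\| \leq 1$ is used (it keeps $\sigma(T)$ from separating $0$ from $\infty$); if one prefers, it can be quoted as an instance of Runge's theorem together with the polynomial convexity of the disc $\{ |z - 1| \leq 1 \}$. Everything else is bookkeeping with Neumann series and adjoints.
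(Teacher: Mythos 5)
Your proof is correct. Note that the paper does not actually write out a proof of this lemma: it is quoted as a rephrasing of Theorem 2.12 of the first reference \cite{BRead}, whose proof runs the first assertion through the parallelogram law, $\Vert \zeta \Vert^2 + \Vert (I-T)\zeta \Vert^2 = 2\bigl(\Vert (I-\tfrac12 T)\zeta\Vert^2 + \tfrac14 \Vert T \zeta \Vert^2\bigr)$, which yields exactly your inequality $\Vert (I - \tfrac12 T)\zeta\Vert^2 \le \Vert \zeta \Vert^2 - \tfrac14 \Vert T\zeta\Vert^2$; so your operator-inequality computation $(I-\tfrac12 T)^*(I-\tfrac12 T) \le I - \tfrac14 T^*T$ is the same argument in different clothing. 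Where you genuinely diverge is the second assertion. You prove it from scratch with one-variable spectral theory: $\sigma(T)$ sits in the disc $\{|z-1|\le 1\}$, so $0$ lies in the unbounded component of the resolvent set when $T$ is invertible, and the open-and-closed/Runge argument puts $T^{-1}$ in the closed algebra generated by $I$ and $T$. That is correct, but it ignores that the first assertion already hands you everything: once $\Vert I - \tfrac12 T\Vert < 1$, the Neumann series $T^{-1} = \tfrac12 \sum_{n\ge 0} (I - \tfrac12 T)^n$ converges in norm and its partial sums are polynomials in $T$, so $T^{-1}$ lies in that closed algebra, and $I = \tfrac12 \sum_{n \ge 0} T(I-\tfrac12 T)^n$ exhibits $I$ as a norm limit of polynomials in $T$ with zero constant term, i.e.\ $I \in {\rm oa}(T)$. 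This is the route the cited proof takes, and it buys brevity and the observation that the whole lemma is really one estimate; your route buys nothing extra here but would survive if the hypothesis $\Vert I - T\Vert \le 1$ were weakened to any condition keeping $0$ in the unbounded component of $\Cdb \setminus \sigma(T)$. Your closing implication ($I \in {\rm oa}(T) \Rightarrow T$ invertible, via approximate kernel vectors of $T$ or $T^*$) is fine, though the purely algebraic version (some $\Vert I - Tq(T)\Vert < 1$ forces $Tq(T)$, hence $T$, to be invertible since everything commutes) avoids Hilbert-space geometry altogether.
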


Generalizing Akemann's $C^*$-algebraic notion (see e.g.\ \cite{Ake,AP}), a {\em compact} projection for an approximately 
unital operator algebra $A$  is a closed projection in $A^{**}$ with $q = qa$ for some $a \in {\rm Ball}(A)$
(this may then  be done with $a \in  \frac{1}{2} {\mathfrak F}_A$).  See \cite{BNII}.
We recall that if $A$ is a
space of continuous functions on a compact set $K$, then a closed
set $E \subset K$ is called a {\em peak set}
 if there exists $f \in A$ such that $f|_E = 1$ and
$|f(x)| < 1$ for all $x \notin E$.   These sets have been generalized to 
operator algebras in \cite{Hayth,Hay,BHN,BRead,BNII}.  There are
various equivalent definitions of peak projections in 
the latter papers (we warn the reader that if $A$ is not unital then the 
definition late in \cite[Section 2]{BRead} is not equivalent and will not be 
considered here).   If $a \in {\rm Ball}(A)$ define $u(a)$ to be the weak* limit
of $a (a^* a)^n$.  If this is a projection then this also equals
the weak* limit $\lim_n \, a^n$ (see \cite[Lemma 3.1]{BNII}), and in this case
we will call $u(a)$ a  {\em peak projection} for $A$
 and say that $a$ {\em peaks at} $u(a)$.

\begin{lemma} \label{haye}  Let $x \in {\rm Ball}(B)$ for a $C^*$-algebra $B$,
  and let $q$ be a closed projection in $B^{**}$ such that $xq=q$.  The following
   are equivalent:
  \begin{enumerate}
 \item $x$ peaks at $q$ in the sense above (that is, $q = u(x)$),
   \item $\| px \| < 1$ for every closed projection $p$ in $B^{**}$
with $p \le 1-q$,
   \item $\| xp \| < 1$ for every closed projection $p \le 1-q$.
 \end{enumerate}
\end{lemma}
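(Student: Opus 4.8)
\emph{Proof plan.} The plan is to reduce statement (1) to a spectral equation for $q$, and then to derive (2) and (3) by an excision argument inside the $C^*$-algebra $B$. Throughout I would use the identities $\|xp\|^{2}=\|p\,x^{*}x\,p\|$ and $\|px\|^{2}=\|p\,xx^{*}\,p\|$, valid for any projection $p$.

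First I would collect the elementary structural facts. Since $\|x\|\le1$, $x$ acts as a contraction on the Hilbert space carrying $B^{**}$, so every vector fixed by $x$ is also fixed by $x^{*}$; from $xq=q$ this gives $x^{*}q=q=qx=qx^{*}$, whence $q$ commutes with $x^{*}x$ and $xx^{*}$ and $q\le e:=\chi_{\{1\}}(x^{*}x)$, $q\le e':=\chi_{\{1\}}(xx^{*})$ (both closed projections in $B^{**}$). Since $(x^{*}x)^{n}\to e$ weak* and left multiplication by $x$ is weak* continuous, $u(x)=xe$; so if $q=e$ then $u(x)=xe=xq=q$ is a projection and (1) holds (and then also $u(x)=\lim_{n}x^{n}$, by the remark before the lemma), whereas if (1) holds then $q=u(x)=xe$, so $q=q^{*}q=ex^{*}xe=e$. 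Thus (1) is equivalent to $q=\chi_{\{1\}}(x^{*}x)$. A short Hilbert-space argument then shows $q=\chi_{\{1\}}(x^{*}x)\iff q=\chi_{\{1\}}(xx^{*})$: assuming $q=e$, if $\xi$ were a unit vector in the range of $e'-q$ then $xx^{*}\xi=\xi$, so $\eta:=x^{*}\xi$ satisfies $\|\eta\|=1$ and $x\eta=\xi$; from $\|x\eta\|^{2}=\langle x^{*}x\eta,\eta\rangle=1$ we get $x^{*}x\eta=\eta$, so $\eta$ lies under $e=q$, and then $0=q\xi=qx\eta=q\eta=\eta$, a contradiction (the reverse implication being symmetric).

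The heart of the argument is an excision lemma: \emph{if $d\in B$ with $0\le d\le1$ and $p$ is a closed projection in $B^{**}$ with $p\,\chi_{\{1\}}(d)=0$, then $\|pdp\|<1$}. I would prove this by contradiction. If $\|pdp\|=1$, choose unit vectors $\xi_{n}$ in the range of $p$ with $\langle d\xi_{n},\xi_{n}\rangle\to1$ and put $\psi_{n}=\langle(\,\cdot\,)\xi_{n},\xi_{n}\rangle$, a state on $B$ with $\psi_{n}(d)\to1$; since $p$ is closed, fix an increasing net $(a_{\lambda})$ in ${\rm Ball}(B)_{+}$ with weak* (equivalently, strong) limit $1-p$, and note $\psi_{n}(a_{\lambda})=0$ for all $n,\lambda$. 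A weak* cluster point $\psi$ of $(\psi_{n})$ in the state space of $B$ satisfies $\psi(1-d)=0$ and $\psi(a_{\lambda})=0$ for all $\lambda$; the set of all such states is a nonempty weak* closed face, so by Krein--Milman it contains a pure state $\phi$. From $\phi(1-d)=0$ one gets $s(\phi)\le\chi_{\{1\}}(d)$ for the support projection $s(\phi)$, and from $\phi(a_{\lambda})=0$ for all $\lambda$ together with the strong convergence $a_{\lambda}\to1-p$ one gets $s(\phi)(1-p)=0$, i.e.\ $s(\phi)\le p$; hence $s(\phi)\le p\,\chi_{\{1\}}(d)=0$, contradicting $s(\phi)\ne0$. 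I would also invoke the standard fact that every nonzero projection in $B^{**}$ dominates a nonzero closed projection (take a pure state $\phi$ taking value $1$ on it; then $0\ne s(\phi)$ is a closed projection below it).

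With these in hand, the four implications are routine. If (1) holds then $q=\chi_{\{1\}}(x^{*}x)=\chi_{\{1\}}(xx^{*})$, so for any closed $p\le1-q$ we have $p\,\chi_{\{1\}}(x^{*}x)=pq=0=p\,\chi_{\{1\}}(xx^{*})$, and the excision lemma (with $d=x^{*}x$ and then $d=xx^{*}$) yields $\|xp\|^{2}=\|p\,x^{*}x\,p\|<1$ and $\|px\|^{2}=\|p\,xx^{*}\,p\|<1$, which is (3) and (2). Conversely, if (1) fails then, by the reduction above and its symmetric form, $q$ lies strictly below both $\chi_{\{1\}}(x^{*}x)$ and $\chi_{\{1\}}(xx^{*})$, so each of the nonzero projections $\chi_{\{1\}}(x^{*}x)-q$ and $\chi_{\{1\}}(xx^{*})-q$ dominates a nonzero closed projection $p_{1}$, resp.\ $p_{2}$, lying under $1-q$; since $x^{*}x\,p_{1}=p_{1}$ and $xx^{*}\,p_{2}=p_{2}$ we obtain $\|xp_{1}\|=1$ and $\|p_{2}x\|=1$, violating (3) and (2) respectively. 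I expect the excision lemma to be the only delicate point: it is the noncommutative replacement for the classical compactness fact that a continuous function which is $<1$ on a closed subset of a compact space attains there a maximum $<1$, and it is the step where the description of open projections as increasing weak* limits of positive elements of $B$ is used essentially.
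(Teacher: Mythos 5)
Your implication (1) $\Rightarrow$ (2), (3) is correct and self-contained: the reduction of (1) to the spectral equation $q=\chi_{\{1\}}(x^*x)=\chi_{\{1\}}(xx^*)$ is sound, and your excision lemma (producing, via Krein--Milman applied to the weak* closed face $\{\phi:\phi(d)=1,\ \phi(a_\lambda)=0\ \forall\lambda\}$, a pure state whose support lies under both $p$ and $\chi_{\{1\}}(d)$) is a valid reconstruction of the argument which the paper simply imports by citing \cite[Lemma 3.1]{BNII} and noting that its proof of (1) $\Rightarrow$ (5) only uses that $p$ is closed. The problem is the converse. The ``standard fact'' you invoke --- that every nonzero projection in $B^{**}$ dominates a nonzero closed projection --- is false, as is its parenthetical justification: a general projection in $B^{**}$ need not take the value $1$ at any pure state. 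Concretely, for $B=C[0,1]$ write $B^{**}\cong\bigoplus_\mu L^\infty(\mu)$ over a maximal family of mutually singular measures; the projection which is the identity of the $L^\infty(\mathrm{Lebesgue})$ summand and $0$ in all other summands is annihilated by every pure state (point evaluation), and it dominates no nonzero closed projection, since a nonzero closed projection is $\chi_F$ for a nonempty closed set $F$ and is therefore nonzero in the atomic summand at each point of $F$.

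This is not a repairable slip of wording, because the instance you need is exactly the content of the hard direction. Indeed, with $e=\chi_{\{1\}}(x^*x)$, your own excision lemma shows that a closed $p\le 1-q$ with $\|xp\|=1$ yields a pure state with support under $p\wedge e\le e-q$; and conversely any nonzero closed projection $m\le e-q$ satisfies $x^*xm=m$, hence $\|xm\|=1$. So ``(not 1) $\Rightarrow$ (not 3)'' is \emph{equivalent} to the assertion that $e-q$ dominates a nonzero closed projection whenever it is nonzero --- and $e-q$, being the difference of two closed projections (equivalently the meet of a closed and an open projection), is in general neither closed nor obviously above any closed projection. You have therefore reduced the implication to itself and discharged it with a false general principle. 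The cheapest repair is the paper's: conditions (2) and (3), quantified over all \emph{closed} $p\le 1-q$, formally imply the corresponding conditions over all \emph{compact} $p\le 1-q$, which is condition (5) of \cite[Lemma 3.1]{BNII}, and that cited result then gives (1). Only the direction (1) $\Rightarrow$ (2), (3) genuinely required a new argument, and that is the part you have supplied correctly.
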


\begin{proof}   Item (2) is the same as (5) in
\cite[Lemma 3.1]{BNII}, but with the words `closed' and `compact' interchanged.
However if one traces through the proof that (1) implies (5) there, one sees that one only
really used that $p$ is closed.  Thus (1) is equivalent to (2).  Since
(1) is a symmetric condition, and $xq=q$ iff $qx = q$ since $\Vert x \Vert \leq 1$,
  we must have (1)  equivalent to (3) too.
\end{proof}

\begin{lemma}  For any operator algebra $A$, the peak projections for $A$ 
are the weak* limits of $a^n$ for $a \in {\rm Ball}(A)$ in the cases that such 
limit exists.  \end{lemma}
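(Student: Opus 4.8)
The plan is to establish the two inclusions. One is immediate from the definitions: if $q$ is a peak projection for $A$ then $q = u(a)$ for some $a \in {\rm Ball}(A)$ with $u(a)$ a projection, and as recalled above (see \cite[Lemma 3.1]{BNII}) this forces $q = \lim_n a^n$ weak*; so every peak projection is a weak* limit of powers of an element of ${\rm Ball}(A)$.

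For the converse, suppose $q = \lim_n a^n$ weak* for some $a \in {\rm Ball}(A)$, and fix a $C^*$-algebra $B$ with $A \subseteq B$. First I would check that $q$ is a projection: since the product on $A^{**}$ is separately weak* continuous, $aq = \lim_n a^{n+1} = q = qa$, so $a^n q = q$ for all $n$, and taking the weak* limit in $n$ gives $q^2 = q$; as $\Vert q \Vert \le \liminf_n \Vert a^n \Vert \le 1$ and a contractive idempotent in the $C^*$-algebra $B^{**}$ is an orthogonal projection, $q$ is a projection. Putting $a' := a - q$, the relations $aq = qa = q = q^2$ give $qa' = a'q = 0$, so $a = q + a'$ is block-diagonal with respect to $q$ in $B^{**}$, $\Vert a' \Vert \le \Vert a \Vert \le 1$, and a binomial expansion gives $(a')^n = a^n - q \to 0$ weak*.

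Now the real issue is that one cannot in general take $q = u(a)$ (for example a norm-one rank-one nilpotent $a$ has $\lim_n a^n = 0$ but $u(a) = a \ne 0$), so instead I would peak at $q$ with the perturbed element $b := \tfrac{1}{2}(a + a^2) \in {\rm Ball}(A)$. With respect to $q$ this is block-diagonal, $b = q + b'$ with $b' := \tfrac{1}{2}(a' + (a')^2)$, so (as $b(b^*b)^n = q + b'((b')^*b')^n$) one gets $u(b) = q + u(b')$, and it suffices to prove $u(b') = 0$. Writing $u(b') = b'p_0$ with $p_0 = \lim_n ((b')^*b')^n$ a projection, and supposing $p_0 \ne 0$, the parallelogram identity applied to $(b'p_0)^*(b'p_0) = p_0$ should force $a'p_0 = (a')^2p_0$ and $(a'p_0)^*(a'p_0) = p_0$; putting $w := a'p_0$ we then have $w \ne 0$, $a'w = w$, and $qw = 0$, hence $aw = w$ and $a^n w = w$ for all $n$, whereas $a^n \to q$ weak* and right multiplication by $w$ is weak* continuous, forcing $w = qw = 0$, a contradiction. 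Hence $p_0 = 0$, $u(b') = 0$, and $q = u(b)$ is a peak projection. The hard part is this final step: seeing that $b = a$ is not adequate, choosing a perturbation that works, and using the fact that $a'$ has no norm-preserving vector (which is exactly what $(a')^n \to 0$ provides) to kill $u(b')$.
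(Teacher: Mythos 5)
Your proof is correct, and it is worth noticing that you end up peaking at exactly the element $\tfrac{1}{2}(a+a^2)=a\cdot\tfrac{1}{2}(1+a)$ that the paper itself uses; the difference lies entirely in how this is justified. The paper's converse direction is a citation chain: it invokes \cite[Corollary 6.9]{BHN} to conclude that $q=\lim_n a^n$ is a peak projection for $A^1$ with $q=u(\tfrac{1+a}{2})$, and then the last lines of the proof of \cite[Theorem 3.4 (2)]{BNII} (the observation that $q=u(x)$ together with $q=qa$ gives $q=u(ax)$) to replace the peaking element $\tfrac{1+a}{2}\in A^1$ by $\tfrac{a+a^2}{2}\in A$. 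You instead verify everything directly: the block decomposition $a=q+a'$ with $qa'=a'q=0$ and $(a')^n\to 0$ weak*, the resulting identity $u(b)=q+u(b')$ for $b=\tfrac{1}{2}(a+a^2)$, and the parallelogram/strict convexity argument showing that a nonzero ``peak part'' $p_0$ of $b'$ would yield a nonzero $w=a'p_0$ with $a^nw=w$ for all $n$, contradicting $a^n\to q$ weak* and $qw=0$. Each ingredient you use (weak* lower semicontinuity of the norm, separate weak* continuity of multiplication in $B^{**}$, the fact that a contractive idempotent in a $C^*$-algebra is an orthogonal projection, and $h^n\searrow \chi_{\{1\}}(h)$ for a positive contraction $h$) is standard and correctly applied. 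What your route buys is independence from the two cited results -- in effect you have reproved the relevant part of \cite[Corollary 6.9]{BHN} in passing, by essentially the same convexity idea that underlies it -- at the cost of length; the paper's version buys brevity by outsourcing precisely the step on which you work hardest. Your observation that one cannot simply take $u(a)$ itself (the rank-one nilpotent example) is also exactly the right motivation for the perturbation to $\tfrac{1}{2}(a+a^2)$.
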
  

\begin{proof}  The one direction is proved in 
 \cite[Lemma 3.1]{BNII}, which is a slight generalization of 
Hay's unital variant of that result \cite{Hay}.
Conversely, if the weak* limit $\lim_n \, a^n$ exists, then it is
a closed projection $q$, indeed it is a peak projection,
 with respect to $A^1$ by \cite[Corollary 6.9]{BHN}.  By that result $q = u(\frac{1+a}{2})$,
and by the last lines of the proof of \cite[Theorem 3.4 (2)]{BNII}, $q = u(x)$ for $x = \frac{a+a^2}{2}
\in {\rm Ball}(A)$.
\end{proof} 
 
If $a \in \frac{1}{2} {\mathfrak F}_A$
then the weak* limit $\lim_n \, a^n$ exists, and equals $u(a)$ (see 
\cite[Corollary 3.3]{BNII}).
 If $A$ is unital then $u(a) = s(1-a)^\perp$ and
$s(a) = u(1-a)^\perp$
for $a \in \frac{1}{2} {\mathfrak F}_A$,
where $s(\cdot)$ is the {\em support projection}
from \cite[Section 2]{BRead} (see \cite[Proposition 2.22]{BRead}).
Compact projections for an approximately 
unital operator algebra are just the  decreasing limits of peak projections
\cite[Theorem 3.4]{BNII}.  They are also the projections in $A^{**}$ 
 compact  with respect to any $C^*$-algebra containing $A$, or 
 with respect to $A^1$,
 by e.g.\ \cite[Theorem 2.2]{BNII}. 
If $A$ is separable and 
unital (resp.\ approximately 
unital) then every closed (resp.\ compact)
projection in $A^{**}$ is a
 peak projection (see \cite[p.\ 200]{BRead} 
and \cite[Theorem 3.4]{BNII}).   In Section 6 we generalize these facts.

\section{Proximinality and noncommutative peak interpolation}  

Peak interpolation focuses on constructing functions in a given algebra
of functions on a topological space $K$, 
which have certain behaviors on fixed open or closed subsets $E$ of $K$.  Usually 
$E$ (or its complement) is a peak set, or an intersection of peak sets.  
In \cite{Hayth,Hay,BHN} Hay, the first author, and Neal, began a program of
 noncommutative peak interpolation, with closed projections in $A^{\perp \perp}$
playing the role of peak sets.  A very nice  application 
of these ideas appears in \cite{Ueda}.
One may find several peak interpolation results in \cite{Hayth,Hay,BHN}.
For example, our noncommutative  Urysohn lemmas in 
\cite{BRead,BNII} are noncommutative peak interpolation theorems. 
  Another
example: in  \cite[Proposition 3.2]{Hay} 
it is shown that if $A$ is a unital operator algebra, if $q$ is a closed projection
in $A^{\perp\perp}$,  and if we are given $b \in A$  
with $\Vert b q \Vert \leq 1$, and an $\epsilon > 0$, then
there exists $a
\in (1+\epsilon) {\rm Ball}(A)$ such that $a q =bq$.   In the commutative case
one may take $\epsilon = 0$ here, but  it has been open for several years whether this 
is true for  noncommutative algebras.
In this section we answer this question.   We had noticed earlier that the question was 
related to an open question that dates to the time of \cite{BEZ} about proximinality (discussed
in the intriduction) which we settle too.  

\begin{theorem} \label{peakth}   Suppose that $A$ is an  operator algebra 
(not necessarily approximately unital), $p$ is an open projection in $A^{**}$,
and $b \in A$ with $bp= p b$.   Then $b$ achieves its
distance to the $\ell$-ideal $J = \{ a \in A : ap = a \}$  associated with $p$  (that is, there exists 
 a point $x \in J$ with $\Vert b - x \Vert =
d(b,J)$).  \end{theorem}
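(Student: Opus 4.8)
The plan is to deduce the statement from the classical fact, recalled in the introduction, that a closed two-sided ideal with a cai in a Banach algebra is proximinal (\cite{HWW}). The key point is that the hypothesis $bp=pb$ forces $b$ to lie in the relative commutant $A_p = \{\, a \in A : ap = pa \,\}$, a closed subalgebra of $A$, inside which the hereditary subalgebra $D = pA^{**}p \cap A$ associated with the open projection $p$ (see \cite[Theorem 2.4]{BHN}) is an honest two-sided ideal. Indeed every $d \in D$ satisfies $pd = dp = d$, so $d \in A_p$; and if $a \in A_p$ and $d \in D$ then $p(ad)p = (pa)(dp) = (ap)d = a(pd) = ad$ (using $pd=dp=d$ and $ap=pa$), so $ad \in pA^{**}p \cap A = D$, and symmetrically $da \in D$. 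Since $D$ is a HSA it has a cai, and this cai lies in $D \subseteq A_p$; thus $D$ is a closed two-sided ideal with cai in the operator algebra $A_p$, hence proximinal in $A_p$.

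Granting this, it suffices to prove that $d(b,D) = d(b,J)$ (these distances being the same whether computed in $A$ or in $A_p$): a point $x_0 \in D$ realizing $d(b,D)$ then lies in $D \subseteq J$ and realizes $d(b,J)$. Since $D \subseteq J$ we trivially have $d(b,D) \ge d(b,J)$, so only the reverse inequality uses the commuting hypothesis. Pass to a $C^*$-algebra $B \supseteq A$, so that $p \in A^{\perp\perp} \subseteq B^{**}$. From $bp=pb$ we get in $B^{**}$ the Peirce decomposition $b = pbp + (1-p)b(1-p)$, with $(1-p)b(1-p) = b(1-p)$; hence $\| b - pbp \| = \| b(1-p) \|$. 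Since $D^{\perp\perp} = pA^{**}p$ contains $pbp$, and $d(b,D) = d(b,D^{\perp\perp})$, this gives $d(b,D) \le \| b(1-p) \|$. On the other hand, every $x \in J$ has $x(1-p)=0$, so $\| b - x \| \ge \| (b-x)(1-p) \| = \| b(1-p) \|$, and thus $d(b,J) \ge \| b(1-p) \|$. Combining, $d(b,D) \le \|b(1-p)\| \le d(b,J) \le d(b,D)$, so all four quantities agree, which completes the argument.

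The reduction is the substance of the proof; once it is set up the remaining verifications are routine, and I do not expect a serious obstacle. The place to be careful is to locate exactly where $bp=pb$ is used: it is needed both to place $b$ in $A_p$ (so that proximinality of $D$ in $A_p$ is applicable to $b$ at all) and, decisively, in the identity $d(b,D) = \| b(1-p) \|$ — without the hypothesis the distance from $b$ to $D$ can be strictly larger than $\| b(1-p)\|$, and indeed, as the example elsewhere in this section shows, best approximation in a one-sided ideal can genuinely fail. A minor technical point to confirm is that $D^{\perp\perp} = pA^{**}p$ and that the cai of the HSA $D$ lies inside $A_p$; both are immediate from the structure theory of HSA's.
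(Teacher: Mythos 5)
Your argument is correct and is essentially the paper's own proof: both place $b$ and the HSA $D = pA^{**}p\cap A$ inside a common subalgebra in which $D$ becomes a two-sided approximately unital ideal (the paper uses the closed unital algebra generated by $b$, $D$ and $1$; you use the relative commutant of $p$), invoke proximinality of such ideals via the Effros--Ruan observation that approximately unital ideals in operator algebras are $M$-ideals, and then match distances through $d(b,D)=\Vert b(1-p)\Vert = d(b,J)$. The only point to tighten is the justification of proximinality: cite the operator-algebra fact just mentioned (valid since $A_p$ is a closed subalgebra of $A$, hence an operator algebra) rather than a blanket claim about two-sided ideals with cai in general Banach algebras.
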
   \begin{proof}    It is well known, and straightforward,
 that for an $\ell$-ideal $J$ 
with support projection $p$ in a unital operator algebra $A$, if $b \in A$ then
$d(b,J) = \Vert b (1-p) \Vert$
(indeed in the second dual of $A/J$, which may be identified with $A^{**}/J^{\perp \perp} =
A^{**}/A^{**}p \cong A^{**} (1-p)$, the canonical copy of $b + J$ corresponds to $b (1-p)$).
 Let $D = pA^{**} p \cap A$, the HSA associated with $p$. 
We have  $$b D = b p D =
pb D \subset pA^{**} p \cap A = D,$$ and similarly $D b \subset D$. 
Now $d(b,J) = \Vert b (1-p) \Vert$ by the fact at the start of the proof  applied in the unitization  $A^1$. 
If $C$ is the closed unital algebra generated by
$b$  and $D$ and the identity of $A^1$, then $D$ is an approximately unital ideal in $C$,
and $p$ is its support projection in $C^{**} \cong C^{\perp \perp} \subset (A^1)^{**}$.
However approximately unital ideals are $M$-ideals
(an observation of Effros and Ruan, see 
e.g.\ \cite[Theorem 4.8.5 (1)]{BLM}),
and hence are proximinal \cite{HWW,BAIC}.  Thus (and also applying
the fact at the start of the proof  again but now with respect to the ideal $D$ of $C$), there exists 
an element $d \in D \subset J$ with $\Vert b - d \Vert = d(b,D) = \Vert b (1-p) \Vert = d(b,J) .$
  \end{proof}

Note that the last result applies to every $\ell$- or $r$-ideal in $A$ (and to all $b \in A$ commuting with the support projection of the
ideal).
 
The following is an `$\epsilon = 0$ variant' of the peak  interpolation result mentioned above Theorem \ref{peakth}.  It is
a generalization of a classical fact about peak sets \cite{Gam} mentioned in our introduction.

\begin{corollary} \label{peakth2}  
 Suppose that $A$ is an operator algebra (not necessarily approximately unital),
 $p$ is an open projection in $A^{**}$,
and $b \in A$ with $bp= p b$ 
 and $\Vert b (1-p) \Vert \leq 1$ (where $1$ is the identity of the unitization of $A$ if $A$ is nonunital).
Then  there exists an element $g \in {\rm Ball}(A)$ with $g (1-p) =  (1-p) g = b(1-p)$.  
\end{corollary}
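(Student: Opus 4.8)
The plan is to obtain $g$ as $b - d$, where $d$ is a best approximant to $b$ drawn not merely from the $\ell$-ideal $J$ of Theorem \ref{peakth} but from the symmetric hereditary subalgebra $D = pA^{**}p \cap A$ determined by $p$. The hypothesis $bp = pb$ is precisely what makes $b$ normalize $D$: indeed $bD = bpD = pbD \subset pA^{**}p \cap A = D$, and similarly $Db \subset D$. So, just as in the proof of Theorem \ref{peakth}, inside the unital operator algebra $C$ generated by $b$, $D$ and $1$ the subalgebra $D$ is an approximately unital ideal, hence an $M$-ideal, hence proximinal, and $d(b,D) = \Vert b(1-p)\Vert$.

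Concretely, I would first quote the construction in the proof of Theorem \ref{peakth} to get $d \in D$ with $\Vert b - d \Vert = d(b,D) = \Vert b(1-p) \Vert$, and set $g = b - d \in A$. Since by hypothesis $\Vert b(1-p) \Vert \leq 1$, we have $\Vert g \Vert = \Vert b(1-p) \Vert \leq 1$, so $g \in {\rm Ball}(A)$. It remains to check the two interpolation identities. Because $d \in D = pA^{**}p \cap A$ we have $pd = dp = d$, whence $d(1-p) = (1-p)d = 0$. Therefore $g(1-p) = b(1-p) - d(1-p) = b(1-p)$, and likewise $(1-p)g = (1-p)b - (1-p)d = (1-p)b = b(1-p)$, where the last equality uses $bp = pb$.

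The one step that genuinely needs care is that the \emph{statement} of Theorem \ref{peakth} only guarantees a best approximant in the one-sided ideal $J = \{a \in A : ap = a\}$, for which merely $d(1-p) = 0$, giving just the one-sided conclusion $g(1-p) = b(1-p)$. To also obtain $(1-p)g = b(1-p)$ one must notice that the element produced in that proof actually lies in the two-sided HSA $D$, so that $(1-p)d = 0$ as well; this is the main (modest) obstacle, and once it is observed everything else is routine bookkeeping with the relation $bp = pb$.
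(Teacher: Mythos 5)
Your proposal is correct and follows essentially the same route as the paper: both take the best approximant produced in the proof of Theorem \ref{peakth}, observe that it actually lies in the HSA $D = pA^{**}p \cap A$ (so that it is annihilated by $1-p$ on both sides), and then use $bp = pb$ to convert the resulting identity $g(1-p) = b(1-p)$ into the two-sided conclusion. The ``step needing care'' you flag is exactly the point the paper makes parenthetically (``the latter since $x \in D$ in the proof of Theorem \ref{peakth}''), so nothing further is required.
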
 

\begin{proof}    Suppose that $x$ is a best approximation to $b$ found in the 
previous proof in the $\ell$-ideal $J$ supported by $p$,
and let $g = b - x$.  Then $g (1-p) = b(1-p)$ since $x p = p x =  x$ (the latter 
since $x \in D$ in the proof of Theorem \ref{peakth}.    If $A$ is nonunital then $J$ is
also the $\ell$-ideal in $A^1$  supported by $p$.  Indeed 
$A^{**} p \cap A = (A^1)^{**} p \cap A^1$  since if $x \in A^1$ with $x = xp \in A^{\perp \perp}$,
then $x \in A^1 \cap A^{\perp \perp} = A$.   As stated in  the previous proof,
 we have $\Vert b (1-p) \Vert = d(b,J)$, which equals 
$\Vert g \Vert$.  So  $g \in {\rm Ball}(A)$.
  \end{proof}

It is of interest to replace
the $1-p$ in the last result by a `compact' projection $q$ in $A^{**}$.
This is possible, as we shall discuss in Section 5.

We now turn to showing that the results above are best possible.
 
\begin{theorem} \label{nomi}   Not every  left ideal with a cai  in  a unital operator algebra is proximinal.  
\end{theorem}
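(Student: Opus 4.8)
The plan is to exhibit a \emph{concrete} unital operator algebra $A$, an $\ell$-ideal $J \subset A$ (a left ideal with a right cai), and an element $b \in A$ for which $\inf_{x \in J} \norm{b-x}$ is not attained. Two structural constraints guide the search. First, by Theorem \ref{peakth} such a $b$ must \emph{fail} to commute with the support projection $p$ of $J$, so the example has to be genuinely noncommutative. Second, the distance to a closed subspace is always attained when the ambient space is reflexive, so $A$ cannot be reflexive; more precisely, the whole point will be that a minimizing sequence $(x_n)$ in $J$ has weak* cluster points in $A^{**}$ but none of these lie in $J$ --- they land in $A^{**}p \setminus A$. I would therefore look for $A$ of a ``field of operator algebras'' type, or a small non-selfadjoint algebra built over a function algebra, with $p \in A^{**}\setminus A$ so that $J$ is nonunital.

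The skeleton of the argument is the same regardless of the construction. Working inside $A$ (which equals its own unitization), the proof of Theorem \ref{peakth} supplies the distance formula $d(b,J) = \norm{b(1-p)}_{A^{**}}$. Since $x \in J$ means $xp = x$, hence $x(1-p)=0$, writing $g = b-x$ gives $g(1-p) = b(1-p)$ with $\norm{g} = \norm{b-x}$. Thus $d(b,J)$ is attained if and only if there is $g \in A$ with $g(1-p)=b(1-p)$ and $\norm{g} = \norm{b(1-p)}$: exactly the ``$\epsilon=0$'' interpolation of Corollary \ref{peakth2}, now demanded of a \emph{non-commuting} datum. The element $g_0 := b - bp = b(1-p)$ realises this inside $A^{**}$, but $g_0$ in general lies outside $A$, and the task is to rule out any genuine element of $A$ doing the same job.

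For the construction I expect to take $A$ to consist of (say $2\times 2$ upper triangular) operator-valued ``functions'' over a suitable index set, with $J$ assembled from a varying family of one-sided corners, arranged so that: (i) $J$ is norm closed and carries a visible right cai --- one writes down an explicit net, typically by truncation, and checks it is contractive, lies in $J$, and absorbs the elements of $J$, all routine; (ii) the support projection $p \in A^{**}$ is computed and seen to lie outside $A$; and (iii) for a carefully chosen $b \in A$ with $bp \ne pb$ one computes $\delta := \norm{b(1-p)}$ by a short norm calculation and then shows $\norm{b-x} > \delta$ for \emph{every} $x \in J$. Step (iii) is the heart of the matter: given a putative minimizer $x$, one leverages $bp \ne pb$ to produce a unit vector (or state) witnessing $\norm{b-x} > \delta$, the strict inequality expressing that the needed ``correction'' of $b$ is available only in $A^{**}$, not in $A$; equivalently one shows any minimizing sequence in $J$ has no subnet whose weak* limit stays in $J$.

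The main obstacle is designing $A$ so that (iii) genuinely holds: $J$ must be rich enough to possess a cai, yet $A$ rigid enough that $b$ cannot be ``shrunk'' to norm $\delta$ modulo $J$ from within $A$. For naive candidates --- e.g.\ finite-dimensional fibres over a compact metric index set --- a continuous-selection argument, using continuity of nearest-point data in the fibres, in fact produces a best approximation, so such algebras are proximinal; the construction must be delicate enough to defeat every such selection argument, and checking that is the crux. Once $A$, $J$, and $b$ are in hand, Theorem \ref{nomi} is immediate; and, read through the reduction above (with $q = 1-p$), the very same example shows that the ``$\epsilon = 0$'' form of the noncommutative peak interpolation statement recalled before Theorem \ref{peakth} fails in general.
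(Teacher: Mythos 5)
Your structural analysis is sound and, in fact, accurately predicts the shape of the paper's example: the obstruction must be genuinely noncommutative (because of Theorem \ref{peakth}, the element $b$ cannot commute with the support projection $p$ of $J$), the algebra cannot be reflexive, $d(b,J)=\Vert b(1-p)\Vert$, and non-attainment is equivalent to failure of the ``$\epsilon=0$'' interpolation for a non-commuting datum. All of that is correct and matches the paper's reduction. But the theorem is an existence statement, and what you have written is a search strategy, not a proof: you never exhibit a concrete $A$, $J$, and $b$, and you explicitly defer the decisive step --- ``the main obstacle is designing $A$ so that (iii) genuinely holds \dots checking that is the crux'' --- without carrying it out. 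Until that construction and verification are supplied, nothing has been proved.

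For comparison, the paper's example is simpler than the ``field of operator algebras'' you envisage. Take $A = C + c_0 + \Cdb I \subset B(\ell^2)$, where $C$ is the first column of $\Kdb(\ell^2)$ and $c_0$ sits on the main diagonal; let $J$ be the copy of $c_0$ supported on $(I-E_{11})\ell^2$ (an $\ell$-ideal with cai given by the finite diagonal truncations $e_n = E_{22}+\cdots+E_{nn}$), and let $b$ have $1$'s on the diagonal and a first column $\vec v\in\ell^2$ with all entries nonzero. One computes $d(b,J)=\Vert\vec v\Vert_2$ by showing $\Vert b-e_n\Vert \le \Vert \vec v_n\Vert_2 + \Vert\vec v - \vec v_n\Vert_2 \to \Vert\vec v\Vert_2$, and then non-attainment follows from a two-column norm computation: if $\Vert b-y\Vert = \Vert\vec v\Vert_2$ for some $y\in J$, then for each $j>1$ the matrix $c$ with columns $\vec v$ and $\vec c_j$ satisfies $\Vert c^*c\Vert = \Vert\vec v\Vert_2^2$, which forces the off-diagonal entry $\bar{v_j}(1-y_j)$ of $c^*c$ to vanish, i.e.\ $y_j=1$ for all $j$, contradicting $(y_j)\in c_0$. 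That last computation is exactly the ``witness'' you gesture at in step (iii); producing it for a specific algebra is the whole content of the theorem, and it is the part your proposal omits.
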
 

\begin{proof}  Identify $c_0$ as the `main  diagonal' of $\Kdb(\ell^2)$, and $C$ as the `first column'
 of $\Kdb(\ell^2)$, and let $A = 
C + c_0 + \Cdb I$, a closed unital subalgebra of $B(\ell^2)$.  Explicitly,
$A$ consists of infinite matrices
$$
a=\left( \begin{array}{ccccccc}
v_1 & 0 & 0 & 0 & 0 & 0 & \ldots \\
v_2& d_2 & 0 & 0 & 0 & 0 & \ldots \\
v_3 & 0 & d_3 & 0 & 0 & 0 & \ldots \\
v_4 & 0 & 0 & d_4  & 0 & 0 & \ldots \\
v_5 & 0 & 0 & 0 & d_5  & 0  & \ldots \\
\ldots& \ldots&\ldots&\ldots&\ldots&\ldots&\ldots\end{array} \right)
$$
with $\vec v = (v_j) \in \ell^2$, and $(d_j) \in c$, the space of all convergent sequences.
Let $J$ be the copy of
$c_0$ multiplied by $I - E_{11}$, which is
an $\ell$-ideal in $A$, involving those matrices $a$ as above with $\vec v = \vec 0$ and 
$\lim_n d_n = 0$.  Set $b \in A$ to be a matrix like $a$ as above 
but with $1$'s on the main diagonal (so $1 = v_1 = d_2 = d_3 = \cdots$), and where $v_j \neq 0$ for all $j$.
    Since $J E_{11} = (0)$ and 
$$\Vert \vec v \Vert 
= \Vert b E_{11} \Vert =  \Vert (b - y)  E_{11} \Vert \leq 
\Vert b - y \Vert , \qquad  y \in J,$$   we have $d(b,J) \geq \Vert \vec v \Vert
=  \Vert \vec v \Vert_2$.
If $e_n = E_{22} + \cdots + E_{nn} \in J$  then one may write  $b - e_n$  as the sum of $\vec v_n \oplus I$ and
$\vec v - \vec v_n$, where $\vec v_n = (E_{11} + e_n) \vec v$.  Hence $$\Vert  b - e_n \Vert 
\leq \max \{ \Vert \vec v_n \Vert , 1 \} + \Vert \vec v -  \vec v_n \Vert =   \Vert \vec v_n \Vert_2 + 
\Vert \vec v -  \vec v_n \Vert_2 \to \Vert \vec v \Vert_2.$$
We deduce that $d(b,J) =  \Vert \vec v \Vert_2.$   We now show that this distance is not achieved.
By way of contradiction suppose that  $y \in J$ with $\Vert  b-y \Vert = d(b,J)$.
Now $b -y$ has first column $\vec v$, and for $j > 1$ the $j$th column $\vec c_j$  of  $b -y$ has at most one nonzero entry, and that entry is $1 - y_j$, where $(y_j) \in c_0$.  If  
 $c$ is the matrix with two columns  $\vec v$ and $\vec c_j$, then 
$$\Vert \vec v  \Vert  \leq   \Vert  c  \Vert  \leq \Vert  b-y \Vert .$$
Hence if $\Vert  b-y \Vert = d(b,J) = \Vert \vec v  \Vert_2$ then $\Vert  c^* c  \Vert  =
\Vert  c  \Vert^2 =  \Vert \vec v  \Vert_2^2$.
However the first row of $c^* c$ is $( \Vert \vec v  \Vert_2^2 , \bar{v_j} (1-y_j))$,
and so   $\bar{v_j} (1-y_j) $ is forced to be $0$, and hence $y_j = 1$ for all $j$.
But this is impossible since $(y_j) \in c_0$.    
 \end{proof}

\begin{corollary} \label{nopi}   Suppose that $A$ is a  unital operator algebra  and that $q$ is a peak projection
for $A$.
If  $b \in A$ with $\Vert bq \Vert \leq 1$ then there need not exist an element $g \in {\rm Ball}(A)$ with $g q = bq$.  
\end{corollary}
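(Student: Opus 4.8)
The plan is to reuse the example from the proof of Theorem \ref{nomi}. Take $A = C + c_0 + \Cdb I$, the $\ell$-ideal $J$, and the element $b$ exactly as constructed there, with the one extra demand that the first column $\vec v$ of $b$ satisfy $\Vert \vec v \Vert_2 = 1$; this is harmless, since the part of that proof showing the distance is unattained used only that every $v_j \neq 0$, that the diagonal of $b$ is identically $1$, and that the perturbing sequences lie in $c_0$. Since $C$ and $c_0$ are separable, $A$ is a separable unital operator algebra, so by the fact recalled near the end of Section 1 every closed projection in $A^{**}$ is a peak projection for $A$.

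Next I would let $p$ be the support (open) projection of $J$ and set $q = 1 - p$. This is a closed projection, hence a peak projection for $A$ by the previous paragraph, and $q \neq 0$ since $J \neq A$. By the elementary fact recalled at the start of the proof of Theorem \ref{peakth}, $\Vert b q \Vert = \Vert b(1-p) \Vert = d(b,J)$, and the explicit computation in the proof of Theorem \ref{nomi} gives $d(b,J) = \Vert \vec v \Vert_2 = 1$. Hence $\Vert b q \Vert \leq 1$, so $A$, $q$, $b$ satisfy the hypotheses of the corollary.

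Finally I would argue by contradiction: if some $g \in {\rm Ball}(A)$ had $g q = b q$, then (as $A$ is unital) $(b-g)(1-p) = 0$, so $b - g = (b-g) p$ lies in $A^{**} p \cap A = J$; writing $y = b - g \in J$ yields $\Vert b - y \Vert = \Vert g \Vert \leq 1 = d(b,J)$, forcing $\Vert b - y \Vert = d(b,J)$. Thus $y$ would attain the distance from $b$ to $J$, which the last paragraph of the proof of Theorem \ref{nomi} shows is impossible.

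I do not expect any genuine obstacle; the argument is just a restatement of the non-proximinality of Theorem \ref{nomi} via the equivalences $g q = b q \Leftrightarrow b - g \in J$ and $\Vert b q \Vert = d(b,J)$, plus the separability upgrade from closed to peak projection. The only things to watch are that the normalization $\Vert \vec v \Vert_2 = 1$ stays compatible with every property of $b$ invoked in Theorem \ref{nomi}, and that $q$ really is a peak (not merely closed) projection, for which separability of $A$ is exactly what is needed.
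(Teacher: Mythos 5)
Your route is essentially the paper's: take $A$, $J$, $b$ from Theorem \ref{nomi}, use separability of $A$ to upgrade the closed projection $q=1-p$ to a peak projection, identify $\Vert bq\Vert$ with $d(b,J)$, and convert a norm-preserving interpolant $g$ into a best approximant in $J$. Your final step is actually a welcome improvement in self-containedness: where the paper invokes ``a slight modification of the proof of \cite[Proposition 6.6]{BHN}'', you spell out the elementary equivalence $gq=bq \Leftrightarrow b-g\in A^{**}p\cap A=J$ and conclude directly. That part is correct.

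There is, however, one concrete slip: the normalization $\Vert \vec v\Vert_2=1$ is incompatible with the construction you say you are preserving. In the example, $v_1$ is the $(1,1)$ entry of $b$, so ``the diagonal of $b$ is identically $1$'' forces $v_1=1$; together with $v_j\neq 0$ for all $j\geq 2$ this forces $\Vert\vec v\Vert_2>1$ strictly, so no $b$ with your three properties exists, and as written your $b$ has $\Vert bq\Vert=\Vert\vec v\Vert_2>1$, violating the hypothesis of the corollary. The repair is immediate and is exactly what the paper does: keep $b$ as in Theorem \ref{nomi} and replace it by $b'=b/\Vert bq\Vert=b/\Vert\vec v\Vert_2$ at the end. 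Since $J$ is a subspace, $d(b',J)=d(b,J)/\Vert\vec v\Vert_2=1=\Vert b'q\Vert$ and attainment for $b'$ is equivalent to attainment for $b$, so your contradiction goes through verbatim for $b'$. (Alternatively one could rebuild the example with constant diagonal $\delta<1$ and $\Vert\vec v\Vert_2=1$, rechecking the non-attainment computation with $1$ replaced by $\delta$; but the rescaling is simpler.) With that one-line fix your argument is complete and matches the paper's proof.
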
 

\begin{proof}    Let $A, J,$ and $b$ be as in the proof of Theorem \ref{nomi}, with $d(b,J)$ 
not achieved.  Since $A$ is separable, the complement of 
the support projection of $J$ is a peak projection $q$
by \cite[Section 2]{BRead} (see also \cite[Theorem 3.4 (2)]{BNII}).  Let $b' = b/\Vert bq \Vert$.  If there was an element $g \in {\rm Ball}(A)$ with $g q = b'q$
then a slight modification  to the  proof of \cite[Proposition 6.6]{BHN} shows that  $d(b,J)$ is achieved. 
 \end{proof}

The last corollary shows that there is probably little point in looking for more sophisticated and completely 
general  peak interpolation
results of this flavor, other  than the ones we have 
already found.   Clearly the way to proceed from this point, in noncommutative peak interpolation,  is to insist on a 
commutativity assumption of the type considered earlier in this section.  

\medskip

{\bf Remark.}  We can answer a question raised in \cite[Section 6]{BHN}. It follows from the above and  \cite[Proposition 6.6]{BHN} that not every $p$-projection in the sense of \cite{Hay,BHN}  is a
`strict $p$-projection'
as defined above  \cite[Proposition 6.6]{BHN}.  

Since r-ideals and $\ell$-ideals are examples of the (complete) one-sided $M$-ideals of \cite{BEZ}
(see Proposition 6.4 there), we can also answer a question raised around the time of that 
investigation (see e.g.\ \cite[Chapter 8]{BZ}):    

\begin{corollary} \label{nomid}    One-sided $M$-ideals in operator spaces
in the sense of {\rm \cite{BEZ}} need not be proximinal.  
\end{corollary}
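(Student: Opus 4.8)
The plan is to observe that this corollary requires essentially no new work: it is an immediate consequence of Theorem \ref{nomi} once the relevant classification is recalled. By Proposition 6.4 of \cite{BEZ}, every $\ell$-ideal in an operator algebra (that is, every left ideal possessing a right cai) is a complete left $M$-ideal in the ambient operator space, and symmetrically every r-ideal is a complete right $M$-ideal. Hence the first step is to invoke this identification for the $\ell$-ideal $J$ constructed in the proof of Theorem \ref{nomi}, namely the left ideal with cai sitting inside the unital operator algebra $A = C + c_0 + \Cdb I \subseteq B(\ell^2)$.

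Next I would simply transcribe the conclusion already obtained in the proof of Theorem \ref{nomi}: for the particular $b \in A$ exhibited there (ones on the main diagonal, nonzero first column $\vec v \in \ell^2$), one has $d(b,J) = \norm{\vec v}_2$, and this distance is \emph{not} attained in $J$. Therefore $J$ is a one-sided $M$-ideal in the operator space $A$, in the sense of \cite{BEZ}, which is not proximinal, and this is precisely the assertion of the corollary.

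The only point that warrants a moment's care is to make sure that the operator space structure on $A$ inherited from $B(\ell^2)$ is the one to which \cite{BEZ} refers, and that Proposition 6.4 there delivers the \emph{complete} (not merely the Banach-space) one-sided $M$-ideal property for r- and $\ell$-ideals; but this is exactly the content of that proposition, so no genuine obstacle arises. In short, the potential difficulty — constructing a one-sided $M$-ideal that fails best approximation — has already been dispatched in Theorem \ref{nomi}, and the corollary is obtained just by renaming $J$ according to the dictionary supplied by \cite{BEZ}.
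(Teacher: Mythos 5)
Your proposal is correct and follows exactly the route the paper takes: the sentence preceding the corollary invokes Proposition 6.4 of \cite{BEZ} to identify r-ideals and $\ell$-ideals as complete one-sided $M$-ideals, and the non-proximinal $\ell$-ideal $J$ from Theorem \ref{nomi} then serves as the counterexample. Nothing further is needed.
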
 

Note that $A$ in our example above is very simple as an operator space, indeed it is a subalgebra of the nuclear locally reflexive $C^*$-algebra 
$\Kdb(\ell^2) + \Cdb I$.   It is clearly exact and  locally reflexive (since these properties are hereditary
\cite{P}).
Thus we see that the obstacle to  proximinality,
and to more sophisticated peak interpolation results
than the ones we have already obtained, is not an operator space
phenomenon, rather it is simply that one needs a certain amount of commutativity.

\section{Elements with positive real part}

If $A$ is any nonunital operator algebra then as we said earlier $A^1$ is uniquely defined, and hence so is
$A^1 + (A^1)^*$ by e.g.\ 1.3.7 in \cite{BLM}.  We define $A + A^*$ to be the obvious subspace
of $A^1 + (A^1)^*$.  This is well defined.  To see this, suppose that $A$ is a subalgebra of
$B(H)$, and that $\theta : A \to B(K)$ is a completely isometric (resp.\ 
completely contractive) homomorphism.
By Meyer's result (see \cite[Section 2.1]{BLM}, the map  $\lambda I_H + a \mapsto \lambda I_K + \theta(a)$ is
completely isometric (resp.\ 
completely contractive), and by e.g.\ 1.3.7 in \cite{BLM} it extends further to a 
unital completely 
isometric complete order 
isomorphism (resp.\ 
completely contractive unital) from $\Cdb  I_H + \, A + A^* \to \Cdb I_K + \, \theta(A) +  \theta(A)^*$, 
namely $\lambda I_H + a + b^* \mapsto \lambda I_K + \theta(a) + \theta(b)^*$.  
This last map restricts to a $*$-linear  completely isometric (resp.\  
completely contractive) surjection $A + A^* \to  \theta(A) +  \theta(A)^*
: a + b^* \mapsto \theta(a) + \theta(b)^*$, for $a, b \in A$.  Thus a statement such as
$a + b^* \geq 0$ makes sense whenever $a, b \in A$, and is independent of the particular $H$ on which $A$
is represented.  We set ${\mathfrak r}_A = \{ a \in A : a + a^* \geq 0 \}$.   This is a closed cone in $A$, and is
weak* closed if $A$ is a dual operator algebra.  

If $x \in A$ with $x + x^* \geq 0$
then  $x$ has a unique $m$th root $x^{\frac{1}{m}}$ for each $m \in \Ndb$
with numerical range having argument in $(-\frac{\pi}{m}, \frac{\pi}{m})$ (see \cite[Theorem 0.1]{LRS}).    It is
shown there that $x^{\frac{1}{m}} \in {\rm oa}(x)^1$, but 
in fact obviously this root lies in ${\rm oa}(x)$ if the latter is
nonunital (since if $x^{\frac{1}{m}} = \lambda 1 + a$ for $\lambda \in \Cdb$ and 
$a \in {\rm oa}(x)$ then by taking $m$th powers 
$x - \lambda^m 1 \in {\rm oa}(x)$).   
We thank the referee for providing a proof of the  
next result, which may be known to experts on sectorial
operators.

\begin{theorem}  \label{sect}  If $x \in B(H)$ with $x + x^* \geq 0$
then $x^{\frac{1}{m}} \,  x \to x$ and $\Vert x^{\frac{1}{m}} \Vert \to 1$ as $m \to \infty$.
Thus a normalization of $(x^{\frac{1}{m}})$
is a cai for oa$(x)$. \end{theorem}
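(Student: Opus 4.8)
The plan is to represent $x^{1/m}$ by the Balakrishnan integral formula from the theory of fractional powers of bounded sectorial operators and to read both limits off from elementary estimates. Write $y_m = x^{1/m}$ and assume $x \neq 0$ (everything being trivial otherwise). Since $x + x^* \geq 0$ says exactly that the numerical range $W(x)$ lies in the closed right half-plane, and $W(x)$ is bounded, $x$ is sectorial of angle at most $\frac{\pi}{2}$; in particular $-t \notin \overline{W(x)} \supseteq \sigma(x)$ for every $t > 0$, so $t+x$ is invertible, and $(t+x)^{-1}$ lies in ${\rm oa}(x)^1$ since $-t$ is in the unbounded component of $\mathbb{C}\setminus\sigma(x)$. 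For $m \geq 2$ and $\beta = 1/m$ I would then use
\[
y_m \;=\; \frac{\sin(\pi\beta)}{\pi}\int_0^\infty t^{\beta-1}\,(t+x)^{-1}x\;dt ,
\]
the Bochner integral converging absolutely by the estimates below. That this operator coincides with the root $x^{1/m}$ of \cite{LRS} (its $m$-th power being $x$, its numerical range sitting in the required sector, and it lying in ${\rm oa}(x)$, resp.\ ${\rm oa}(x)^1$) is the standard identification of the Balakrishnan power with the sectorial fractional power; pinning this down by citing the sectorial-operator literature, rather than re-deriving it, is the one step I regard as the main obstacle. Everything afterwards is bookkeeping with the estimates below.

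The one non-routine elementary ingredient is: $\Vert x(t+x)^{-1}\Vert \leq 1$ for every $t>0$. Indeed, putting $\eta = (t+x)^{-1}\xi$, so $\xi = t\eta + x\eta$, accretivity gives $\Vert\xi\Vert^2 = t^2\Vert\eta\Vert^2 + 2t\,{\rm Re}\langle x\eta,\eta\rangle + \Vert x\eta\Vert^2 \geq \Vert x\eta\Vert^2$, whence $\Vert x(t+x)^{-1}\xi\Vert = \Vert x\eta\Vert \leq \Vert\xi\Vert$; the same identity yields $\Vert x(t+x)^{-1}\Vert \leq \Vert x\Vert/(t-\Vert x\Vert)$ for $t>\Vert x\Vert$, so in all $\Vert x(t+x)^{-1}\Vert \leq \min\{1,\,2\Vert x\Vert/t\}$. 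The lower bound is immediate: $\Vert x\Vert = \Vert y_m^m\Vert \leq \Vert y_m\Vert^m$ gives $\Vert y_m\Vert \geq \Vert x\Vert^{1/m}\to 1$. For the upper bound, feed this into the integral for $y_m$: since $\int_0^\infty t^{\beta-1}\min\{1,c/t\}\,dt = c^\beta\big(\tfrac1\beta + \tfrac1{1-\beta}\big)$ with $c = 2\Vert x\Vert$, one gets $\Vert y_m\Vert \leq \frac{m^2\sin(\pi/m)}{\pi(m-1)}(2\Vert x\Vert)^{1/m}$, which tends to $1$. Hence $\Vert y_m\Vert \to 1$.

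For $y_m x \to x$ I would start from the scalar identity $a^\beta - 1 = \frac{\sin(\pi\beta)}{\pi}(a-1)\int_0^\infty t^\beta(t+a)^{-1}(t+1)^{-1}\,dt$ (the $(t+1)^{-1}$ piece integrates against $\frac{\sin(\pi\beta)}{\pi}t^{\beta-1}$ to $1$, as $\int_0^\infty t^{\beta-1}(1+t)^{-1}dt = \pi/\sin(\pi\beta)$), in its operator form
\[
y_m - I \;=\; \frac{\sin(\pi\beta)}{\pi}\int_0^\infty \frac{t^\beta}{t+1}\,(x-I)(t+x)^{-1}\;dt ;
\]
multiplying by $x$ and commuting it past $(t+x)^{-1}$ turns the integrand into $\frac{t^\beta}{t+1}(x-I)\cdot x(t+x)^{-1}$, of norm at most $(\Vert x\Vert+1)\frac{t^\beta}{t+1}\min\{1,2\Vert x\Vert/t\}$, whose integral over $(0,\infty)$ stays bounded as $m\to\infty$ while the prefactor $\sin(\pi/m)/\pi\to 0$. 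Hence $\Vert y_m x - x\Vert \to 0$; and $x y_m = y_m x$ since both lie in ${\rm oa}(x)^1$, so nothing is needed on the other side.

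Finally, set $e_m = y_m/\Vert y_m\Vert$ (legitimate since $x \neq 0$): these are contractions lying in ${\rm oa}(x)$, and $e_m x \to x$ because $\Vert y_m\Vert \to 1$; hence $e_m x^j = x^{j-1}(e_m x) \to x^j$ for every $j \geq 1$, and therefore $e_m a \to a$ and $a e_m \to a$ for every $a \in {\rm oa}(x)$ by the usual $\epsilon/3$ argument, using density of the polynomials in $x$ with zero constant term together with $\Vert e_m\Vert = 1$. Thus $(e_m)$ is a (two-sided) cai for ${\rm oa}(x)$, which is the assertion. As noted, the only genuinely delicate point is the passage from \cite{LRS} to the Balakrishnan representation of $x^{1/m}$; the rest is manipulation of the two displayed integrals.
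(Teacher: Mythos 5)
Your proof is correct, but it takes a genuinely different route from the one in the paper. The paper works directly with the machinery of \cite[Theorem 1.2]{LRS}: it represents $x^t$ as a contour integral $\frac{1}{2\pi i}\int_{\Gamma}\lambda^t(\lambda 1-x)^{-1}\,d\lambda$ over a sector boundary through the origin, uses the accretivity bound $\Vert\lambda(\lambda 1-x)^{-1}\Vert\leq K$ on $\Gamma$ to get $\Vert x^t\Vert\leq C/t$ and, by dominated convergence, $x^{1+t}\to x$; the estimate $\limsup_m\Vert x^{1/m}\Vert\leq 1$ is then extracted by a rather delicate argument writing $x^{1/m}=a_m+ib_m$, expanding $(a_m+ib_m)^k$ binomially and exploiting the fact that the numerical range of $x^{1/m}$ lies in a sector of half-angle $\pi/m$ (the lower bound comes from a contradiction with $x^{1/m}x\to x$). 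You instead use the Balakrishnan real-line integral $\frac{\sin(\pi\beta)}{\pi}\int_0^\infty t^{\beta-1}(t+x)^{-1}x\,dt$, and the single elementary resolvent estimate $\Vert x(t+x)^{-1}\Vert\leq\min\{1,2\Vert x\Vert/t\}$ (which checks out, as do the two integral computations) then delivers both limits by direct calculation; your lower bound $\Vert y_m\Vert\geq\Vert x\Vert^{1/m}$ via submultiplicativity is cleaner than the paper's. What your approach buys is a substantially more elementary and quantitative argument, avoiding the paper's binomial expansion entirely; what it costs is exactly the point you flag yourself, namely the identification of the Balakrishnan power with the root of \cite[Theorem 0.1]{LRS}. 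That identification is standard but does need a citation: one must know that the Balakrishnan power satisfies $(x^{1/m})^m=x$ and has numerical range in the sector of half-angle $\pi/m$ (Kato's theorem on fractional powers of accretive operators, or the equivalence of definitions for bounded sectorial operators in \cite{Haase}), whereupon the uniqueness clause of \cite[Theorem 0.1]{LRS} pins it down. With that reference supplied, your argument is complete.
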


\begin{proof}  We use the machinery in \cite[Theorem 1.2]{LRS}.  In particular,
 let $\theta$ be a number slightly bigger than
$\frac{\pi}{2}$,
and let $\Gamma$ be the positively oriented  closed curve in the plane
 with three pieces: the two line segments $\Gamma_1$ and $\Gamma_3$
from $0$ to the two points $R e^{i \theta}$ and $R e^{-i \theta}$, and $\Gamma_2$ the right part of the
circle radius $R$ centered at the origin connecting to the latter two points.   We choose $R$ so that
the numerical range of $x$ is inside the circle radius $R$.
Define $\Gamma_1(\epsilon)$ to be $\Gamma_1$ with the last part of it,
a segment of length $\epsilon$, removed.
Similarly define $\Gamma_3(\epsilon)$ to be the part of $\Gamma_3$ bounded away from $0$.  Let 
$\Gamma(\epsilon)$ be the part of $\Gamma$ comprised by $\Gamma_3(\epsilon), 
\Gamma_2$ and  $\Gamma_1(\epsilon)$; this is
a connected but not closed curve, and 
 \cite{LRS} defines 
$x^t = \lim_{\epsilon \to 0^+} \, \frac{1}{2 \pi i} \int_{\Gamma_\epsilon} \,
\lambda^{t} (\lambda 1 - x)^{-1} \, d \lambda$, for $t \in (0,1)$,
showing that this limit exists in norm.
  Next consider the integral $\frac{1}{2 \pi i} \int_{\Gamma} \,
(\lambda^{1+t} - \lambda) (\lambda 1 - x)^{-1} \, d \lambda$.   It is known that since
$x$ is `accretive', we have that $\lambda (\lambda 1 - x)^{-1}$ is bounded on
$(\Gamma_1 \cup \Gamma_3) \setminus \{ 0 \}$ 
(see e.g.\ \cite[Lemma C.7.2 (v)]{Haase}), and by continuity it is also bounded on $\Gamma_2$.
Thus there is a constant $K$ with
\begin{equation} \label{eqnio}  \Vert \lambda 
(\lambda 1 - x)^{-1} \Vert \leq K , \qquad \lambda \in \Gamma \setminus \{ 0 \} .  \end{equation}
It follows that
$$\Vert \int_{\Gamma_1(\epsilon)}  \, \lambda^{t} (\lambda 1 - x)^{-1} \, d \lambda \Vert
\leq K \int_{\Gamma_1(\epsilon)}  \, |\lambda^{t-1}| ds = K \int_{\epsilon}^R \, r^{t-1} \, dr
= K(\frac{R^t}{t} - \frac{\epsilon^{t}}{t}) .$$
A similar bound holds for
 $\Gamma_3(\epsilon)$.
On $\Gamma_2$ the continuous function $(\lambda 1 - x)^{-1}$ is bounded, and since
$|\lambda^{t}| = R^t \leq \max \{ 1, R \}$ here,
 we see that  $\int_{\Gamma_2} \,
\lambda^{t} (\lambda 1 - x)^{-1} \, d \lambda$ is bounded
independently of $t$.  Letting $\epsilon \to 0$ we
deduce that $\Vert x^t \Vert \leq \frac{C}{t}$ for some constant $C > 0$.

We now consider contour integrals over $\Gamma$.  Since $0 \in \Gamma$
this is not the usual Riesz functional calculus, but rather
an extended version of it of the type considered e.g.\ in \cite{Haase}.
By the functional calculus for such contours, we have
$$x^{1+t} - x = \frac{1}{2 \pi i} \int_{\Gamma} \, (\lambda^{1+t} - \lambda) (\lambda 1 - x)^{-1} \, d \lambda
= \frac{1}{2 \pi i} \int_{\Gamma} \, (\lambda^t - 1) \lambda (\lambda 1 - x)^{-1} \, d \lambda .$$
We are using here the fact that $\Vert \int_{\Gamma \setminus \Gamma(\epsilon)}
(\lambda^t - 1) \lambda (\lambda 1 - x)^{-1} \, d \lambda
\Vert$ is dominated, by Equation (\ref{eqnio}),  by a constant times the
length of $\Gamma \setminus \Gamma(\epsilon)$,  which goes to $0$.
Indeed by the same argument, there is a constant $D$ with
 $\Vert x^{1+t} - x \Vert \leq D \int_{\Gamma} \, |\lambda^t - 1| ds.$
By Lebesgues dominated convergence theorem we
deduce that $x^{\frac{1}{m}} x \to x$ as $m \to \infty$.

 As we said earlier,  $x^{\frac{1}{m}}$ has  numerical range having argument in $(-\frac{\pi}{m},
\frac{\pi}{m})$ (see \cite[Theorem 0.1]{LRS}).
Write $x^{\frac{1}{m}} = a_m + i b_m$ for selfadjoint
$a_m, b_m$ with $a_m \geq 0$.  Suppose that $k < m$.
Expand $x^{\frac{k}{m}} =  (a_m + i b_m)^k$, and use this
to write $a_m^k$ as $x^{\frac{k}{m}}$ minus various products  of powers
of $a_m$ and $b_m$.   Applying the triangle inequality to the
the latter, one obtains $$\Vert a_m \Vert^k \leq \Vert x^{\frac{k}{m}} \Vert
 + \sum_{j=1}^k
\, {k \choose j} \, \Vert b_m \Vert^j  \Vert a_m \Vert^{k-j} .$$
It follows that
$$\frac{C m}{k} \geq \Vert x^{\frac{k}{m}} \Vert \geq
\Vert a_m \Vert^k -
\sum_{j=1}^k  \, {k \choose j} \, \Vert b_m \Vert^j \Vert \Vert a_m \Vert^{k-j}
= \Vert a_m \Vert^k (2 - (1+ \frac{\Vert b_m \Vert}{\Vert a_m \Vert})^k).$$
By the numerical range fact above,
 there is a constant $c > 0$
with $$|\psi(b_m)| < \frac{c}{m} |\psi(a_m)| \leq \frac{c}{m} \Vert a_m \Vert, $$ for every state $\psi$.  Thus
$\Vert b_m \Vert \leq \frac{c}{m} \Vert a_m \Vert$.  We deduce that
$\Vert a_m \Vert^k (2 - (1+\frac{c}{m})^k) \leq \frac{C m}{k}$.
If we ensure that $k \leq \beta m$ for  a fixed $\beta$ with
 $0 < \beta < \frac{\ln \frac{3}{2}}{c}$, then for $m$ and $k$ large
enough we have $2 - (1+\frac{c}{m})^k > \frac{1}{2}$, and so 
$\Vert a_m \Vert < 
(\frac{2C m}{k})^{\frac{1}{k}}$.  If $k$ dominates a small constant times $m$,
it follows that $\limsup_m \Vert a_m \Vert \leq 1$.  Hence
$\limsup_m \Vert x^{\frac{1}{m}}  \Vert \leq 1$.
On the other hand,
if a subsequence $\Vert x^{\frac{1}{m_k}}  \Vert \to \beta < 1$, then
this would  contradict the
fact that $x^{\frac{1}{m}} x \to x$.  So $\lim_m \Vert x^{\frac{1}{m}} \Vert \to 1$.   \end{proof}

{\bf Remark.}  The fact that if $x + x^* \geq 0$  then
 ${\rm oa}(x)$ has a cai was deduced in an earlier version of the paper
from the next result and \cite[Lemma 2.1]{BRead}.
 
\begin{theorem}  \label{oax}  If $A$ is an operator algebra 
and $x \in A$ with $x + x^* \geq 0$
then  ${\rm oa}(x) = {\rm oa}(y)$
for some $y \in \frac{1}{2} {\mathfrak F}_A$.  One 
may take $y = x (I+x)^{-1}$, and if so 
$\Vert y \Vert \leq \frac{\Vert x \Vert}{\sqrt{1 + \Vert x \Vert^2}}$. 
\end{theorem}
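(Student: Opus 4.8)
The plan is to take $y = x(I+x)^{-1}$, and to verify in order that this element is well defined, that $y \in {\rm oa}(x)$, that $y \in \frac{1}{2}{\mathfrak F}_A$, and that $x \in {\rm oa}(y)$ (so that ${\rm oa}(x) = {\rm oa}(y)$); the norm bound then falls out of the estimate showing $y$ is well defined. Fix a completely isometric representation $A \subseteq B(H)$; since ${\rm oa}(x)$, ${\mathfrak F}_A$, and the relation $a + b^* \geq 0$ are all independent of the representation, it suffices to argue in $B(H)$. Since $x + x^* \geq 0$ we have $\mathrm{Re}\, \langle (I+x)\xi, \xi \rangle = \Vert \xi \Vert^2 + \mathrm{Re}\, \langle x \xi, \xi \rangle \geq \Vert \xi \Vert^2$ for all $\xi \in H$, and the same with $I+x$ replaced by its adjoint $I + x^*$; hence $I+x$ is bounded below with dense range, so it is invertible in $B(H)$ and $y := x(I+x)^{-1} = I - (I+x)^{-1}$ is defined. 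Note that $x$ commutes with $(I+x)^{-1}$, that $I - y = (I+x)^{-1}$, and therefore that $x = y(I-y)^{-1} = (I-y)^{-1} y$.

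Next I would show $y \in {\rm oa}(x)$. Since $x + x^* \geq 0$, the bounded operator $-x$ generates a contraction semigroup: $\frac{d}{dt} \Vert e^{-tx} \xi \Vert^2 = -2\, \mathrm{Re}\, \langle x e^{-tx} \xi, e^{-tx} \xi \rangle \leq 0$, so $\Vert e^{-tx} \Vert \leq 1$ for $t \geq 0$. Then $\Vert e^{-t(I+x)} \Vert \leq e^{-t}$, so $\int_0^\infty e^{-t(I+x)} \, dt$ converges in norm and equals $(I+x)^{-1}$ (integrate $\frac{d}{dt}[- (I+x)^{-1} e^{-t(I+x)}]$). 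For each $t \geq 0$ the norm-convergent series $e^{-tx} - I = \sum_{n \geq 1} \frac{(-t x)^n}{n!}$ lies in the closed algebra ${\rm oa}(x)$, and $\Vert e^{-tx} - I \Vert \leq 2$; since ${\rm oa}(x)$ is norm closed,
\[ y = I - (I+x)^{-1} = - \int_0^\infty e^{-t}\, (e^{-tx} - I) \, dt \in {\rm oa}(x). \]
(Alternatively, ${\rm oa}(x)$ is commutative and the spectrum of $x$ relative to ${\rm oa}(x)^1$ is contained in the closure of the numerical range of $x$, which lies in $\{ \mathrm{Re}\, z \geq 0 \}$, so $(I+x)^{-1} \in {\rm oa}(x)^1$, and a short look at constant terms then puts $y$ in ${\rm oa}(x)$.) In particular $y \in A$, and ${\rm oa}(y) \subseteq {\rm oa}(x)$.

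To see $y \in \frac{1}{2}{\mathfrak F}_A$, write $w = I+x$, so $y = I - w^{-1}$, and set $w^{-*} = (w^*)^{-1}$. Expanding and using $w^{-*} w^* w^{-1} = w^{-1}$ and $w^{-*} w w^{-1} = w^{-*}$, one finds
\[ (y + y^*) - 2 y^* y = w^{-1} + w^{-*} - 2 w^{-*} w^{-1} = w^{-*}(w + w^* - 2I) w^{-1} = w^{-*}(x + x^*) w^{-1} \geq 0 . \]
Thus $(2y) + (2y)^* \geq (2y)^* (2y)$, i.e.\ $2y \in {\mathfrak F}_{B(H)}$ by the criterion recalled just before Lemma \ref{abr}; since $y \in A$ and ${\mathfrak F}_A = A \cap {\mathfrak F}_{B(H)}$, this gives $y \in \frac{1}{2}{\mathfrak F}_A$.

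Finally I would show ${\rm oa}(x) \subseteq {\rm oa}(y)$ and prove the norm bound. Since $y \in \frac{1}{2}{\mathfrak F}_A$ we have $\Vert I - 2y \Vert \leq 1$, so Lemma \ref{abr} applies to $T := 2(I-y) = 2(I+x)^{-1}$, which is invertible in $B(H)$; it follows that $T$ is invertible in the closed subalgebra generated by $I$ and $T$, namely ${\rm oa}(y)^1$. Hence $I + x = (I-y)^{-1} = 2 T^{-1} \in {\rm oa}(y)^1$, and so $x = (I-y)^{-1} y \in {\rm oa}(y)$, since ${\rm oa}(y)$ is an ideal in its unitization; with the inclusion from the second paragraph this gives ${\rm oa}(x) = {\rm oa}(y)$. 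For the norm estimate we may assume $x \neq 0$; given $\xi \in H$ put $\eta = (I+x)^{-1} \xi$, so $\xi = \eta + x \eta$, and using $\mathrm{Re}\, \langle x \eta, \eta \rangle \geq 0$ and $\Vert x \eta \Vert \leq \Vert x \Vert\, \Vert \eta \Vert$,
\[ \Vert \xi \Vert^2 = \Vert \eta \Vert^2 + 2\, \mathrm{Re}\, \langle x \eta, \eta \rangle + \Vert x \eta \Vert^2 \geq \Vert \eta \Vert^2 + \Vert x \eta \Vert^2 \geq \Big( 1 + \frac{1}{\Vert x \Vert^2} \Big) \Vert x \eta \Vert^2 = \Big( 1 + \frac{1}{\Vert x \Vert^2} \Big) \Vert y \xi \Vert^2 , \]
which rearranges to $\Vert y \Vert \leq \Vert x \Vert / \sqrt{1 + \Vert x \Vert^2}$. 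The step I expect to be the main obstacle is the reverse inclusion ${\rm oa}(x) \subseteq {\rm oa}(y)$: one must recover $x$ from $y$, and the natural identity $x = y(I-y)^{-1}$ only helps once $(I-y)^{-1}$ is known to lie in ${\rm oa}(y)^1$, which is precisely what Lemma \ref{abr} supplies from the bound $\Vert I - 2y \Vert \leq 1$. Keeping track of whether elements land in an algebra or only in its unitization is the recurring subtlety.
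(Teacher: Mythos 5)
Your proposal is correct and follows essentially the same route as the paper: the same choice $y = x(I+x)^{-1}$, the Cayley-transform idea (your direct verification that $y+y^* \geq 2y^*y$ is just an unpacked form of the paper's observation that $I-2y=(I-x)(I+x)^{-1}$ is a contraction), the same application of Lemma \ref{abr} to recover $I+x=(I-y)^{-1}$ inside ${\rm oa}(y)^1$ and hence $x\in{\rm oa}(y)$, and the identical norm estimate. The only real divergence is cosmetic: you establish $y\in{\rm oa}(x)$ via the semigroup integral $(I+x)^{-1}=\int_0^\infty e^{-t(I+x)}\,dt$, whereas the paper uses the numerical-range/spectrum argument you mention parenthetically as an alternative.
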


\begin{proof} 
Suppose that $A$ acts on a Hilbert
space $H$ and write $I$ for $I_H$, and 
identify oa$(x)^1$ with oa$(I,x)$, the operator algebra generated by
oa$(x)$ and $I$ in $B(H)$.   The numerical range of $x$ in the 
latter algebra lies in the right hand half plane.
We have  $-1 \notin 
{\rm Sp}_{{\rm oa}(I,x)}(x)$ and so $I+x$ is 
invertible in ${\rm oa}(I,x)$.
Set $y = x (I+x)^{-1} \in {\rm oa}(x)$.    
Then $I - 2y = (I-x) (I+x)^{-1}$, the Cayley transform of 
$x$, which is well known (see e.g.\ 
\cite[IV, Section 4]{Sz}), and is easy to see,  is a contraction
if $x + x^* \geq 0$.   Hence $y \in \frac{1}{2} {\mathfrak F}_A$.
Note that $I - y = (I+x)^{-1}$.  It follows from 
Lemma \ref{abr} that oa$(I-y)$ contains  $(I-y)^{-1} =  I + x$
and  $I$.
So  $(I-y)^{-1} \in {\rm oa}(I-y) \subset {\rm oa}(I,y) 
\subset {\rm oa}(I,I-y) = {\rm oa}(I-y)$.  Hence  $x = y (I-y)^{-1} \in {\rm oa}(y)$.   
Thus ${\rm oa}(x) = {\rm oa}(y)$.  

Representing $A \subset B(H)$, for $\zeta \in H$ we have 
$$\Vert (1+x) \zeta \Vert^2 = \langle (1 + x + x^* + x^* x) \zeta , \zeta 
\rangle \geq (\frac{1}{\Vert x \Vert^2} + 1)  \langle x^* x  \zeta , \zeta \rangle  = 
(1 + \frac{1}{\Vert x \Vert^2})  \Vert  x  \zeta  \Vert^2 .$$
Thus $$\Vert x (I+x)^{-1} \zeta \Vert \leq  \frac{\Vert x \Vert}{\sqrt{1 + \Vert x \Vert^2}} \Vert \zeta \Vert
\leq \frac{\Vert x \Vert}{\sqrt{1 + \Vert x \Vert^2}} ,$$
for $\zeta \in {\rm Ball}(H).$  So 
$\Vert y \Vert \leq \frac{\Vert x \Vert}{\sqrt{1 + \Vert x \Vert^2}}$.  \end{proof}

The following result was found by the first author, during a discussion with S. Sharma.
The present proof however is an observation of the referee.

\begin{theorem}  \label{cls}  If $A$ is any (not necessarily
 approximately unital)  operator algebra then the closure of
 $\Rdb^+ {\mathfrak F}_A$ equals $\{ x \in A : x + x^* \geq 0 \}$.
\end{theorem}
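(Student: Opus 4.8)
The plan is to prove the two inclusions separately; the first is formal, and the second rests on Theorem \ref{oax}.

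For $\overline{\Rdb^{+} {\mathfrak F}_{A}} \subseteq \{ x \in A : x + x^{*} \geq 0 \}$: recall from the discussion of ${\mathfrak F}_A$ above that $x \in {\mathfrak F}_{A}$ implies $x + x^{*} \geq x^{*} x \geq 0$, so ${\mathfrak F}_{A} \subseteq {\mathfrak r}_{A}$. Since ${\mathfrak r}_{A}$ is a cone this gives $\Rdb^{+} {\mathfrak F}_{A} \subseteq {\mathfrak r}_{A}$, and since ${\mathfrak r}_{A}$ is norm closed (as noted at the start of this section) the inclusion of closures follows at once.

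For the reverse inclusion, I would fix $x \in A$ with $x + x^{*} \geq 0$ and approximate $x$ in norm by resolvent-type elements. For $n \in \Ndb$ set $w_{n} = \frac{1}{n} x$, so that $w_{n} + w_{n}^{*} = \frac{1}{n}(x + x^{*}) \geq 0$. Applying Theorem \ref{oax} to $w_{n}$ produces $y_{n} := w_{n}(I + w_{n})^{-1} \in \frac{1}{2}{\mathfrak F}_{A}$, the inverse existing in ${\rm oa}(w_{n})^{1} \subseteq {\rm oa}(x)^{1}$ (indeed for $n > \Vert x \Vert$ it is simply a convergent Neumann series in powers of $x$). Scaling by $n$, we get $n y_{n} = x(I + \frac{1}{n} x)^{-1} \in \frac{n}{2}{\mathfrak F}_{A} \subseteq \Rdb^{+}{\mathfrak F}_{A}$. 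Finally, $x(I + \frac{1}{n} x)^{-1} - x = -\frac{1}{n} x^{2}(I + \frac{1}{n} x)^{-1}$, and since $\Vert (I + \frac{1}{n} x)^{-1} \Vert \leq (1 - \Vert x \Vert/n)^{-1}$ for $n > \Vert x \Vert$, this forces $\Vert n y_{n} - x \Vert \leq \Vert x \Vert^{2}/(n - \Vert x \Vert) \to 0$. Hence $x \in \overline{\Rdb^{+}{\mathfrak F}_{A}}$, which completes the proof.

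The only genuine idea here is the choice of approximant. The step that, in a from-scratch argument, would be the main obstacle — showing that the Cayley transform $(I - w_{n})(I + w_{n})^{-1}$ of the accretive element $w_{n}$ is a contraction, equivalently that $w_{n}(I + w_{n})^{-1} \in \frac{1}{2}{\mathfrak F}_{A}$ — is exactly the content of Theorem \ref{oax} (and, underneath it, the classical fact that the Cayley transform of an accretive operator is a contraction), so in the present setting nothing remains but the routine norm estimate above.
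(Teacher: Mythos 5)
Your proof is correct and is essentially the paper's own argument: the forward inclusion is handled the same way, and your approximants $x(I+\tfrac{1}{n}x)^{-1} = n\,w_n(I+w_n)^{-1}$ are exactly the paper's $\frac{1}{t}\,tx(1+tx)^{-1}$ with $t = \frac{1}{n}$, drawn from Theorem \ref{oax}. The only difference is that you spell out the Neumann-series norm estimate for the convergence $n y_n \to x$, which the paper leaves implicit.
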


\begin{proof}    
If $x \in {\mathfrak F}_A$ then $x + x^* \geq x^* x \geq 0$.  It follows that
  the closure of
 $\Rdb^+ {\mathfrak F}_A$ is contained in the closed cone $\{ x \in A : x + x^* \geq 0 \}$.  For the converse, 
if $x + x^* \geq 0$ then we appeal to the proof of the last result.  We have
$x = \lim_{t \to 0^+} \, \frac{1}{t} t x (1 + tx)^{-1}$, and $t x (1 + tx)^{-1} \in {\mathfrak F}_A$.  
\end{proof}

We are currently  working on implications of some of the results above with S. Sharma.
We mention a few now.  Following the ideas in the route presented in \cite{BRead} 
one immediately obtains:

\begin{corollary}   \label{supp}  {\rm (Cf.\ \cite[Lemma 2.5]{BRead})} \ For any operator algebra $A$,
if $x \in A$ with $x + x^* \geq 0$ and  $x \neq 0$,
then the left support projection of $x$
 equals the right support projection, and equals $s(x (1+x)^{-1})$,
where $s(\cdot)$ is the support projection 
studied in {\rm \cite{BRead}}.
If $A \subset B(H)$ via a representation $\pi$, for a Hilbert space
$H$, such that the unique weak* continuous extension $\tilde{\pi} :
A^{**} \to B(H)$ is (completely) isometric, then this
support projection $s(x)$ also may be
identified with the smallest projection $p$ on $H$ such that $p x =
x$ (and $x p = x$). That is, $s(x)H  = \overline{{\rm Ran}(x)} =
{\rm Ker}(x)^\perp$. Also,  $s(x)$ is an open projection in $A^{**}$
in the sense of  {\rm \cite{BHN}}. 
 \end{corollary}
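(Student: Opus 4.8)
The plan is to derive all parts of the statement from the corresponding assertions for elements of $\frac{1}{2}{\mathfrak F}_A$ --- that is, from \cite[Lemma 2.5]{BRead}, the $\frac{1}{2}{\mathfrak F}_A$-analogue of this corollary --- by transporting them along Theorem \ref{oax}. Set $y = x(1+x)^{-1}$. By Theorem \ref{oax} we have $y \in \frac{1}{2}{\mathfrak F}_A$ and ${\rm oa}(x) = {\rm oa}(y)$, and the proof of that theorem shows that $1-y = (1+x)^{-1}$ is invertible in ${\rm oa}(1,x) \subseteq A^1$ with $(1-y)^{-1} = 1+x$, whence $x = y(1-y)^{-1} = (1-y)^{-1}y$ (powers of $x$ commute). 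The first step is to check that $x$ and $y$ have the same left support projection in $A^{**}$, and likewise the same right support projection. For a projection $e \in A^{**}$: if $ex = x$ then $ey = (ex)(1+x)^{-1} = x(1+x)^{-1} = y$, and conversely if $ey = y$ then $ex = (ey)(1-y)^{-1} = y(1-y)^{-1} = x$. So the smallest $e$ with $ex = x$ is the smallest $e$ with $ey = y$; the symmetric computation, using $x = (1-y)^{-1}y$ and $y = (1+x)^{-1}x$, handles right support projections.

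By \cite[Lemma 2.5]{BRead} the left and right support projections of $y$ coincide, equal $s(y)$, and $s(y)$ is an open projection in $A^{**}$ (being the support projection of the approximately unital subalgebra ${\rm oa}(y)$; cf.\ \cite{BHN}). Combined with the previous paragraph, the left and right support projections of $x$ coincide and equal $s(y) = s(x(1+x)^{-1})$, an open projection. (Openness can also be seen directly: by Theorem \ref{sect}, applied through the representation $\pi$, ${\rm oa}(x) = {\rm oa}(y)$ has a cai $(e_t)$, which is a net in $A$ with $s(x) e_t = e_t$ and $e_t \to s(x)$ weak* in $A^{**}$ --- precisely the defining property of an open projection.)

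For the Hilbert space picture, assume $A \subset B(H)$ with $\tilde{\pi} : A^{**} \to B(H)$ (completely) isometric, and identify $s(x)$ with a projection on $H$. It remains to see that $\overline{{\rm Ran}(x)} = \overline{{\rm Ran}(y)}$ and ${\rm Ker}(x) = {\rm Ker}(y)$, for then $s(x)H = s(y)H$ yields the claimed equalities via \cite[Lemma 2.5]{BRead}. Writing $T = (1-y)^{-1} = 1+x$, an invertible operator on $H$, and $x = yT$, one gets ${\rm Ran}(x) = {\rm Ran}(yT) = {\rm Ran}(y)$ since $T$ is invertible; and $x\zeta = 0$ iff $T\zeta \in {\rm Ker}(y)$ iff $\zeta \in {\rm Ker}(y)$, the last step because $(1-y)^{\pm 1}$ maps ${\rm Ker}(y)$ onto itself (indeed $(1-y)\eta = \eta$ for $\eta \in {\rm Ker}(y)$). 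Hence $s(x)H = \overline{{\rm Ran}(x)} = {\rm Ker}(x)^\perp$. (Alternatively: with $(e_t)$ as above, each $e_t$ is a norm limit of polynomials in $x$ with zero constant term, so ${\rm Ran}(e_t) \subseteq \overline{{\rm Ran}(x)}$ and ${\rm Ker}(x) \subseteq {\rm Ker}(e_t)$; letting $e_t \to s(x)$ in the weak operator topology, and using that $\overline{{\rm Ran}(x)}$ is weakly closed and that $\overline{{\rm Ran}(x)} = {\rm Ker}(x)^\perp$ for accretive $x$, gives the conclusion.)

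I expect no serious obstacle here. The two points needing care are: (i) verifying that the invertible transforms relating $x$ and $y$ genuinely preserve the support-projection conditions --- which is clean only because the relevant invertibles lie in the commutative algebra ${\rm oa}(1,x)$ and sit on the outside of the products; and (ii) in the represented case, not conflating the least projection in $A^{**}$ fixing $x$ with the least projection on $H$ fixing $x$; these coincide exactly because of the hypothesis that $\tilde{\pi}$ is (completely) isometric, which is what makes the formula $s(x)H = \overline{{\rm Ran}(x)}$ legitimate. The write-up should be short.
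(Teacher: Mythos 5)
Your proposal is correct and is essentially the paper's intended argument: the paper gives no written proof beyond the remark that the result follows ``immediately'' by the route of \cite{BRead}, i.e.\ by transporting \cite[Lemma 2.5]{BRead} along the Cayley-type transform $y = x(1+x)^{-1}$ of Theorem \ref{oax}, which is exactly what you do (and your verifications that $x$ and $y$ share left/right support projections, ranges and kernels, via the invertible factors $(1+x)^{\pm 1}$ in the commutative algebra ${\rm oa}(1,x)$, are the right details to supply).
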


We write $s(x)$ for the support projection in the last result
for $x \in {\mathfrak r}_A$.

\begin{corollary}  \label{supp2}  {\rm (Cf.\ \cite[Corollary 2.6]{BRead})} \ For any operator algebra $A$, if $x \in A$ with $x + x^* \geq 0$ then
 the closure of $xA$ is an r-ideal in $A$ and $s(x)$ is the support projection
of this r-ideal.  We have $\overline{xA} = 
\overline{yA} = s(x) A^{**} \cap A$, where 
$y \in \frac{1}{2} {\mathfrak F}_A$ is as in Theorem  {\rm \ref{oax}}.  The analogous results hold for 
$\overline{Ax}$, and this  is the $\ell$-ideal matching $\overline{xA}$.   Also,
$\overline{xAx}$ is the HSA matching $\overline{xA}$.
\end{corollary}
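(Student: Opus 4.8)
The plan is to transport everything onto the element $y = x(I+x)^{-1} \in \frac{1}{2}{\mathfrak F}_A$ furnished by Theorem \ref{oax}, for which the desired conclusions are (in the unital case) \cite[Corollary 2.6]{BRead}, and then to descend from the unitization back to $A$ when $A$ is not approximately unital.

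First I would establish the set equalities $xA = yA$ and $xAx = yAy$ (not merely their closures). Representing $A \subset B(H)$ as in the proof of Theorem \ref{oax}, recall that $x = y(I+x) = (I+x)y$ and $(I+x)^{-1} = I - y$ with $y \in A$. Expanding products and using $x, y \in A$, one checks that $(I+x)a$, $a(I+x)$, $(I+x)a(I+x)$, $(I-y)a$, $a(I-y)$ and $(I-y)a(I-y)$ all lie in $A$ for every $a \in A$. Hence $xa = y\big((I+x)a\big) \in yA$ and $ya = x\big((I-y)a\big) \in xA$, so $xA = yA$; likewise $xax = y\big((I+x)a(I+x)\big)y \in yAy$ and $yay = x\big((I-y)a(I-y)\big)x \in xAx$, so $xAx = yAy$. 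In particular $\overline{xA} = \overline{yA}$ and $\overline{xAx} = \overline{yAy}$.

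Next I would move between $A$ and $A^1$. By Theorem \ref{sect}, ${\rm oa}(x) = {\rm oa}(y)$ has a cai $(e_t) \subset {\rm oa}(y) \subset A$; since $y e_t \to y$ and $y e_t y \to y^2$ in norm, we get $y \in \overline{yA}$ and $y^2 \in \overline{yAy}$, whence $\overline{yA} = \overline{yA^1}$ and $\overline{yAy} = \overline{yA^1y}$. Now $y \in \frac{1}{2}{\mathfrak F}_{A^1}$, so \cite[Corollary 2.6]{BRead} applied in the unital algebra $A^1$ gives that $\overline{yA^1}$ is an r-ideal in $A^1$ with support projection $s(y)$, and $\overline{yA^1} = s(y)(A^1)^{**} \cap A^1$, $\overline{A^1 y} = (A^1)^{**}s(y) \cap A^1$, $\overline{yA^1 y} = s(y)(A^1)^{**}s(y) \cap A^1$. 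Since $A$ is an ideal of $A^1$, the set $A^{\perp\perp}$ is a weak* closed ideal of $(A^1)^{**}$ which contains $s(y)$ (an open projection for $A$), so $s(y)(A^1)^{**}$ and $s(y)(A^1)^{**}s(y)$ lie in $A^{\perp\perp}$; combined with $A^1 \cap A^{\perp\perp} = A$ (as used in the proof of Corollary \ref{peakth2}) this yields $s(y)(A^1)^{**} \cap A^1 = s(y)A^{**} \cap A$, and the two analogous identities.

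Assembling the pieces: $\overline{xA} = \overline{yA} = \overline{yA^1} = s(y)A^{**} \cap A = s(x)A^{**} \cap A$, the last equality being Corollary \ref{supp}. This is a right ideal of $A$ admitting a left cai contained in $\overline{xA} \subset A$ (the left cai of $\overline{yA^1}$ lands in $\overline{yA^1} = \overline{yA}$, or one can use $(e_t)$ directly since $e_t \in {\rm oa}(y) \subset \overline{yA}$), hence an r-ideal of $A$ with support projection $s(x)$. Running the left-handed version of the same argument, $\overline{Ax} = \overline{Ay} = A^{**}s(y) \cap A = A^{**}s(x) \cap A$ is an $\ell$-ideal of $A$, and it has the same support projection $s(x)$ as $\overline{xA}$ because the left and right support projections of $x$ coincide by Corollary \ref{supp}; thus $\overline{Ax}$ and $\overline{xA}$ correspond under the r-ideal/$\ell$-ideal bijection of \cite{BHN}, i.e.\ $\overline{Ax}$ is the $\ell$-ideal matching $\overline{xA}$. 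Finally $\overline{xAx} = \overline{yAy} = \overline{yA^1y} = s(y)A^{**}s(y)\cap A = s(x)A^{**}s(x)\cap A$, which is precisely the HSA matching the r-ideal $\overline{xA}$. I expect the only real friction to be the non-unital bookkeeping in the third paragraph — in particular, making sure that $\overline{xA}$ is an r-ideal of $A$ and not merely of $A^1$, which is exactly where $y \in \overline{yA}$ and the identity $A^1 \cap A^{\perp\perp} = A$ come in; everything else is the Cayley-transform reduction together with a direct appeal to the already-established $\frac{1}{2}{\mathfrak F}$ theory.
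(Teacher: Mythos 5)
Your proof is correct and follows exactly the route the paper intends (the authors omit the details, saying they follow ``the route presented in \cite{BRead}'' and will appear elsewhere): reduce to $y = x(I+x)^{-1} \in \frac{1}{2}{\mathfrak F}_A$ via Theorem \ref{oax}, apply \cite[Corollary 2.6]{BRead} in $A^1$, and descend using $A^1 \cap A^{\perp\perp} = A$. The set identities $xA = yA$, $xAx = yAy$, the verification that $y \in \overline{yA}$ via the cai of ${\rm oa}(y)$, and the identification $s(y)=s(x)$ from Corollary \ref{supp} are all handled correctly.
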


Thus our descriptions of r-ideals and $\ell$-ideals and HSA's from \cite{BRead} 
in terms of ${\mathfrak F}_A$, may be rephrased in terms
of the $x \in A$ with $x + x^* \geq 0$.  
 Corollaries 2.7 and 2.8 of \cite{BRead} are 
true with $x \in {\mathfrak F}_A$ replaced by $ {\mathfrak r}_A$.  Most of Lemma 2.10 of \cite{BRead} also generalizes 
to this case, with the exception of (iv) and (v).
Theorem 3.2  of \cite{BRead} generalizes
to this case too. 
We present the easy details of the proofs elsewhere.

Similarly, all results in \cite[Section 8]{BRead} generalize.
For example, if one  defines a map $T : A \to B$  to be
{\em real completely positive} if $T(x) + T(x)^* \geq 0$ whenever  $x \in A$ with $x + x^* \geq 0$
(and the obvious matching matricial version of 
this assertion holding for  $x \in M_n(A)$, for all $n \in \Ndb$),
then  a map  on an approximately unital operator algebra or operator system
is real completely positive iff
it is OCP in the sense of \cite{BRead}.   The following is the analogue of
\cite[Lemma 8.1]{BRead}.
 
\begin{corollary}  \label{supp3}   Suppose that $A$ is an approximately unital operator algebra.
Then $\{ x \in A : x + x^* \geq 0 \}$ is 
weak* dense in  $\{ x \in A^{**} : x + x^* \geq 0 \}$. 
\end{corollary}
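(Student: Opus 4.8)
The plan is to derive this from the corresponding density statement for ${\mathfrak F}_A$ --- namely \cite[Lemma 8.1]{BRead}, to which the present corollary is the stated analogue --- by feeding it through Theorem \ref{cls}.

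First I would dispose of the easy inclusion, that the weak* closure of ${\mathfrak r}_A$ lies in $\{ x \in A^{**} : x + x^* \geq 0 \}$. By the opening paragraphs of this section, applied to the completely isometric homomorphism $A \hookrightarrow A^{**}$, the positivity of $a + a^*$ for $a \in A$ is the same whether computed in $A + A^*$ or in $A^{**} + (A^{**})^*$; hence ${\mathfrak r}_A \subseteq \{ x \in A^{**} : x + x^* \geq 0 \}$. Since $A^{**}$ is a dual operator algebra the latter cone is weak* closed, so it contains the weak* closure of ${\mathfrak r}_A$.

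For the reverse inclusion, let $C$ denote the weak* closure of ${\mathfrak r}_A$; this is a weak* closed cone, and $C \supseteq {\mathfrak F}_A$. Since $A$ is approximately unital, $A^{**}$ is a unital operator algebra, and by \cite[Lemma 8.1]{BRead} the set ${\mathfrak F}_A$ is weak* dense in ${\mathfrak F}_{A^{**}}$, so $C \supseteq {\mathfrak F}_{A^{**}}$. As $C$ is a cone it follows that $C \supseteq \Rdb^+ {\mathfrak F}_{A^{**}}$, and as $C$ is weak* closed, hence norm closed, $C$ contains the norm closure of $\Rdb^+ {\mathfrak F}_{A^{**}}$, which by Theorem \ref{cls} applied to the operator algebra $A^{**}$ is precisely $\{ x \in A^{**} : x + x^* \geq 0 \}$. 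Together with the previous paragraph this gives the asserted equality.

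So the only genuinely substantial ingredient is \cite[Lemma 8.1]{BRead}; everything else is formal manipulation of cones and closures, the remaining analytic input being the (elementary) norm density in Theorem \ref{cls}, seen there via $x = \lim_{t \to 0^+} \frac{1}{t}\, t x(1+tx)^{-1}$ with $t x(1+tx)^{-1} \in {\mathfrak F}_{A^{**}}$. I expect the main obstacle to lie entirely inside \cite[Lemma 8.1]{BRead}, which is genuinely delicate. For instance, the naive attempt to weak*-approximate a point $1 - v$ of ${\mathfrak F}_{A^{**}}$ (with $v$ in the ball of $A^{**}$) by replacing $1$ with a term of a cai of $A$ and $v$ by a ball element of $A$ (using Goldstine's theorem) fails, since the triangle inequality only bounds the resulting norm by $2$ rather than $1$; the cai must be exploited much more carefully. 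A softer route also stalls: separating a putative point of $\{ x \in A^{**} : x + x^* \geq 0 \}$ from $C$ by Hahn--Banach gives $f \in A^*$ with ${\rm Re}\, f \leq 0$ on ${\mathfrak r}_A$, and one checks this forces $f$ to annihilate the skew-adjoint elements of $A$, so $f$ descends to a positive functional on $A + A^*$; but to finish one would need to extend this to a positive functional on $\Cdb 1 + A + A^*$, and the relevant Krein-type cofinality condition --- that $A + A^*$ contain an element $\geq \lambda 1$ for each $\lambda > 0$ --- already fails for $A = c_0$, for which the corollary is nonetheless true.
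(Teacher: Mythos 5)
Your proof is correct and takes essentially the same route as the paper's: both reduce the statement to Theorem \ref{cls} applied to $A^{**}$ combined with the weak* density of ${\mathfrak F}_A$ in ${\mathfrak F}_{A^{**}}$ from \cite[Lemma 8.1]{BRead}. The only difference is organizational --- you argue via the weak* closed cone $C = \overline{{\mathfrak r}_A}^{w*}$, while the paper fixes $\eta$ in the target cone and approximates it in two stages --- so the content is identical.
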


\begin{proof}   We stated earlier that  $\{ x \in A^{**} : x + x^* \geq 0 \}$ is weak* closed.
If $\eta$ is in the latter set, it is a limit of elements of the form $t y$ for $t \geq 0$ and 
$y \in   {\mathfrak F}_{A^{**}}$, 
by Theorem \ref{cls}.  However $ty$ is a weak* limit 
of elements in $t  {\mathfrak F}_A \subset \{ x \in A : x + x^* \geq 0 \}$ by  \cite[Lemma 8.1]{BRead}.
\end{proof}

Many results from e.g.\ \cite{BNI, BNII} generalize too (i.e.\
using $\{ x \in A : x + x^* \geq 0 \}$ in place of ${\mathfrak F}_{A}$), by virtue of
 e.g.\ the rules for powers given in \cite[Lemma 1.1]{BNI},
which are still valid by an obvious proof using \cite[Corollary 1.3]{LRS}.

\section{Nonunital operator algebras}

If $A$ is a unital or approximately unital operator algebra then ${\mathfrak F}_A$ seems quite manageable.
Hitherto we had assumed that ${\mathfrak F}_A$ could be badly behaved if $A$ was not approximately
  unital.  However we shall 
see below that  in this case ${\mathfrak F}_A =  {\mathfrak F}_C$ for an approximately unital subalgebra $C$ (which might be $(0)$).

If $A$ is any operator algebra, define $A_H = \overline{{\mathfrak F}_A \, A {\mathfrak F}_A}$.
This will play an important role in the sequel.  
We define $A_r = \overline{{\mathfrak F}_A \, A}$
and $A_\ell = \overline{A \, {\mathfrak F}_A}$.   By e.g.\ \cite[Corollary 2.6]{BRead}
and the fact from the introduction
that the closure of a sum of r-ideals is an r-ideal,
$A_r$ is an r-ideal.  Similarly, $A_\ell$ is an $\ell$-ideal.  In 
fact these are the largest r-ideal and $\ell$-ideal in $A$ 
(as may be seen using \cite[Theorem 2.15]{BRead}).    By the proof in the introduction 
that the closure of a sum of r-ideals is an r-ideal, the support projection of 
$A_r$ is $p = \vee_{x \in  {\mathfrak F}_A} \, s(x)$, where $s(x)$ denotes the  support  projection of 
$x$ (see \cite[Section 2]{BRead}), since $s(x)$ is
the support projection of $\overline{xA}$ (by e.g.\ Corollary \ref{supp2}). 
 and joins).  Similarly, the support projection of 
$A_\ell$ is $p$, and now we see that $A_\ell$ is the $\ell$-ideal associated with $A_r$ (see \cite[Section 2]{BHN}).
We also see that $A_H = \overline{{\mathfrak F}_A \, A {\mathfrak F}_A} = A_r A_\ell$ is the 
HSA associated with this r-ideal  (see \cite[Section 2]{BHN}).

\begin{proposition} \label{chau}  An operator algebra $A$ has a cai
iff the  span of ${\mathfrak F}_A$ is dense in $A$.
\end{proposition}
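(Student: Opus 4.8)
The forward direction is essentially trivial: if $A$ has a cai $(e_t)$, then for each $t$ we can take $e_t$ close to being in $\mathfrak{F}_A$ in the following sense --- actually more directly, a cai can be chosen inside $\mathfrak{F}_A$ (this is essentially \cite[Lemma 2.1]{BRead} or follows from the fact that $A$ has a cai consisting of elements of $\frac12 \mathfrak{F}_A$), so certainly $\overline{\text{Span}\,\mathfrak{F}_A} \supset \overline{\text{Span}\{e_t\} \cdot A} = A$ using that $(e_t)$ is a cai. In any case this direction needs only a sentence or two. The real content is the converse.

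\textbf{The converse.} Suppose $\text{Span}\,\mathfrak{F}_A$ is dense in $A$. The natural strategy is to show this forces $A = A_r$ (equivalently $A = A_H$, the HSA constructed just above the proposition), since $A_r$ is always an r-ideal and if $A_r = A$ then $A$ is an r-ideal in itself, which happens precisely when $A$ has a left cai; combined with the symmetric statement $A = A_\ell$ (a left ideal with a right cai) one gets a two-sided cai. So the plan is: first, recall from the discussion preceding the proposition that $A_r = \overline{\mathfrak{F}_A \, A}$ and that it is the largest r-ideal in $A$, with support projection $p = \vee_{x \in \mathfrak{F}_A} s(x)$. Second, observe that since every $x \in \mathfrak{F}_A$ satisfies $s(x) \leq p$, we have $x = s(x)\, x \,s(x) \cdot (\text{something})$; more precisely $x \in \overline{xA}\subset A_r$ by Corollary \ref{supp2}, so $\mathfrak{F}_A \subset A_r$, hence $\text{Span}\,\mathfrak{F}_A \subset A_r$, and taking closures, $A = \overline{\text{Span}\,\mathfrak{F}_A} \subset A_r \subset A$. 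Thus $A = A_r$ is an r-ideal in itself; by \cite[Proposition 2.5.8]{BLM} (cited in the introduction of the excerpt) this means $A$ has a left cai, with support projection $p = 1_{A^{**}}$, i.e. $p$ is the identity of $A^{**}$.

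\textbf{Upgrading to a two-sided cai.} A left cai is not yet a cai. But the symmetric argument gives $A = A_\ell = \overline{A\,\mathfrak{F}_A}$, an $\ell$-ideal in itself, hence $A$ has a right cai, and its support projection (computed the same way) is again $p$, which is now a two-sided identity for $A^{**}$. Once $A^{**}$ has an identity $1_{A^{**}}$ of norm $1$ --- and $\Vert p \Vert = 1$ since $p$ is a projection (or since it is a weak* limit of elements of $\text{Ball}(A)$) --- the standard fact recalled in the introduction ("$A$ has a cai iff $A^{**}$ has an identity of norm $1$") finishes the proof. I expect the only mild subtlety is making sure the support projection of $A_r$ and of $A_\ell$ genuinely coincide and act as a true two-sided identity on $A^{**}$; but this is exactly what was spelled out in the paragraph just before the proposition (the support projection of $A_\ell$ "is $p$, and now we see that $A_\ell$ is the $\ell$-ideal associated with $A_r$"), so it can be invoked directly rather than reproved. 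The argument is therefore short and the main "obstacle" is really just bookkeeping with the results already assembled.
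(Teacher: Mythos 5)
Your converse direction is essentially the paper's own argument and is fine: density of ${\rm Span}({\mathfrak F}_A)$ forces $A = A_r = A_\ell$, since each $x \in {\mathfrak F}_A$ lies in $\overline{xA} \subset A_r$ (because $x$ has roots in ${\rm oa}(x)$, or equivalently because $\overline{xA} = s(x)A^{**}\cap A \ni x$ by Corollary \ref{supp2}); then $A$ has a left cai and a right cai, hence a cai by \cite[Proposition 2.5.8]{BLM}. The extra bookkeeping about the two support projections coinciding is not needed once you invoke that last cited fact.

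The gap is in the forward direction, which you dismiss as needing ``only a sentence or two.'' From a cai $(e_t)$ chosen in $\frac{1}{2}{\mathfrak F}_A$ you conclude $\overline{{\rm Span}}({\mathfrak F}_A) \supset \overline{{\rm Span}\{e_t\}\cdot A} = A$. But ${\rm Span}({\mathfrak F}_A)$ is merely a linear subspace: knowing $e_t \in {\mathfrak F}_A$ puts $e_t$ in it, not the products $e_t a$ for general $a \in A$. That ${\rm Span}({\mathfrak F}_A)$ absorbs right multiplication by $A$ (equivalently, is dense) is exactly what is to be proved, so the step is circular. In the unital case the direction really is trivial, since ${\mathfrak F}_A = 1 + {\rm Ball}(A)$ spans $A$ --- but that uses $1 \in A$. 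For merely approximately unital $A$ the paper has to work in the bidual: $A^{**}$ is unital, so $A^{**} = {\rm Span}({\mathfrak F}_{A^{**}})$; by \cite[Lemma 8.1]{BRead}, ${\mathfrak F}_A$ is weak* dense in ${\mathfrak F}_{A^{**}}$; hence any $\varphi \in A^*$ annihilating ${\mathfrak F}_A$ annihilates all of $A^{**}$ and is zero, and norm density of the span follows by Hahn--Banach. You need this weak*-density lemma (or some substitute for it) to make the forward implication go through.
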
 \begin{proof}  ($\Rightarrow$) \ If $A$ is unital then
it is the span of ${\mathfrak F}_A = 1 + {\rm Ball}(A)$ obviously. 
Thus in the general case $A^{**}$ is the  
span of ${\mathfrak F}_{A^{**}}$.  
If $\varphi \in ({\mathfrak F}_A)^{\perp}$ then
$\varphi$ annihilates the span of the weak* closure 
of ${\mathfrak F}_A$.  This weak* closure is ${\mathfrak F}_{A^{**}}$
by \cite[Lemma 8.1]{BRead}, and so $\varphi = 0$ on $A^{**}$ and so is zero.  Thus Span$({\mathfrak F}_A)$ is dense.

($\Leftarrow$) \ If the span of ${\mathfrak F}_A$ is dense in $A$
then $A_r = A_\ell = A$, using the existence of roots of elements of 
${\mathfrak F}_A$.  Hence $A$ has a right cai and a left cai,
and therefore has a cai (by e.g.\ \cite[Proposition 2.5.8]{BLM}). 
 \end{proof}  

\begin{theorem}  \label{sps}  If $A$ is any operator algebra then the closure of 
the linear  span of ${\mathfrak F}_A$
 is a HSA in $A$.  Indeed it is the biggest 
approximately unital operator algebra inside $A$, and equals $A_H$.  Moreover
${\mathfrak F}_A = {\mathfrak F}_{A_H}$.
\end{theorem}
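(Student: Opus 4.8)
The plan is to identify the closure $C$ of $\mathrm{Span}({\mathfrak F}_A)$ with the HSA $A_H = \overline{{\mathfrak F}_A \, A {\mathfrak F}_A}$ built just before the statement, and then to read off the remaining assertions from properties of $A_H$ already established. First I would show $C \subseteq A_H$. The key point is that every $x \in {\mathfrak F}_A$ lies in $A_H$: indeed by Theorem \ref{sect} (or by \cite[Theorem 0.1]{LRS} together with the root fact recalled in Section 3, since ${\mathfrak F}_A \subseteq \Rdb^+ {\mathfrak r}_A$) the element $x$ has roots $x^{1/m} \in \mathrm{oa}(x)$ with a normalization of $(x^{1/m})$ a cai for $\mathrm{oa}(x)$, hence $x = \lim_m x^{1/m} x \, x^{1/m}$-type approximations put $x$ in the closed set $\overline{{\mathfrak F}_A A {\mathfrak F}_A}$; more directly, $x^{1/m} \in {\mathfrak F}_A$ as well and $x = \lim_m x^{1/m} x = \lim_m x \, x^{1/m}$, so $x = \lim_m x^{1/m} x x^{1/m} \in A_H$. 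Since $A_H$ is a closed subspace, $C \subseteq A_H$.

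Next I would show $A_H \subseteq C$. This is immediate from the definition $A_H = \overline{{\mathfrak F}_A A {\mathfrak F}_A}$ once one checks that products $x a y$ with $x,y \in {\mathfrak F}_A$, $a \in A$, lie in $\overline{\mathrm{Span}({\mathfrak F}_A)}$. For this I would use that $A_H$ is a HSA with support projection $p = \vee_{x \in {\mathfrak F}_A} s(x)$, so $A_H$ has a cai; being an approximately unital operator algebra, $A_H$ equals the closure of the span of ${\mathfrak F}_{A_H}$ by Proposition \ref{chau}. Since $A_H \subseteq A$ is a subalgebra, ${\mathfrak F}_{A_H} = A_H \cap {\mathfrak F}_A \subseteq {\mathfrak F}_A$ (the identity ${\mathfrak F}_D = D \cap {\mathfrak F}_B$ for a closed subalgebra recalled in the introduction), so $A_H = \overline{\mathrm{Span}({\mathfrak F}_{A_H})} \subseteq \overline{\mathrm{Span}({\mathfrak F}_A)} = C$. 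Combining, $C = A_H$, which is a HSA in $A$.

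It then remains to see $A_H$ is the biggest approximately unital subalgebra of $A$ and that ${\mathfrak F}_A = {\mathfrak F}_{A_H}$. For the maximality: if $D \subseteq A$ is any approximately unital subalgebra, then by Proposition \ref{chau} applied to $D$, $D = \overline{\mathrm{Span}({\mathfrak F}_D)}$ and ${\mathfrak F}_D = D \cap {\mathfrak F}_A \subseteq {\mathfrak F}_A$, whence $D \subseteq \overline{\mathrm{Span}({\mathfrak F}_A)} = A_H$. (This also re-proves that $A_H$ is the HSA matching the largest r-ideal $A_r$ and largest $\ell$-ideal $A_\ell$, as noted before the statement; alternatively one invokes \cite[Theorem 2.15]{BRead} as already indicated there.) Finally ${\mathfrak F}_{A_H} = A_H \cap {\mathfrak F}_A$, and the reverse containment ${\mathfrak F}_A \subseteq A_H$ was shown in the first step, so ${\mathfrak F}_A = A_H \cap {\mathfrak F}_A = {\mathfrak F}_{A_H}$.

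The main obstacle is the first step — verifying that $x \in {\mathfrak F}_A$ forces $x \in A_H = \overline{{\mathfrak F}_A A {\mathfrak F}_A}$; everything else is bookkeeping with Proposition \ref{chau}, the identity ${\mathfrak F}_D = D \cap {\mathfrak F}_B$, and the HSA facts collected before the statement. One must be slightly careful that the approximating roots $x^{1/m}$ genuinely lie in ${\mathfrak F}_A$ (not merely in $\Rdb^+{\mathfrak F}_A$) and that they furnish a two-sided approximate identity for $x$; this is exactly what Theorem \ref{sect} (applied in $B(H)$ with $A \subseteq B(H)$) delivers, together with the remark that the roots lie in $\mathrm{oa}(x) \subseteq A$.
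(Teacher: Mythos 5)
Your proposal is correct and follows essentially the same route as the paper: the key step in both is that each $x\in{\mathfrak F}_A$ lies in $A_H$ because its roots $x^{1/m}$ stay in ${\mathfrak F}_A\cap{\rm oa}(x)$ (the paper simply writes $x=x^{1/3}x^{1/3}x^{1/3}$ rather than taking a limit), after which ${\mathfrak F}_{A_H}={\mathfrak F}_A$ and everything else follows from Proposition \ref{chau} exactly as you describe.
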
 

\begin{proof}   Let $D = \overline{\spn}({\mathfrak F}_A)$ and $C = A_H$.   Clearly  ${\mathfrak F}_C \subset 
 {\mathfrak F}_A$.  Conversely, since any $x \in  {\mathfrak F}_A$ has a third root,  we have
$x \in C$, so that $x \in  {\mathfrak F}_C$.  Thus ${\mathfrak F}_C = {\mathfrak F}_A$. 
 Hence $D =  \overline{{\rm Span}}({\mathfrak F}_C) =  C$ by Proposition \ref{chau}, and it
is clearly a HSA.    If $B$ is an approximately unital subalgebra
of $A$ then  ${\mathfrak F}_B \subset  {\mathfrak F}_A = {\mathfrak F}_C$, and so $B \subset C$
by  Proposition \ref{chau}.  
\end{proof} 

\begin{corollary}   \label{ah}  
Let  $A$ be any operator algebra.  \begin{itemize} 
 \item [(1)]    $A_H = \overline{{\rm Span}}({\mathfrak r}_A)$, and ${\mathfrak r}_A = {\mathfrak r}_{A_H}
\subset A_H$ .  
\item [(2)]   $A$ has a cai iff $A = \overline{{\rm Span}}({\mathfrak r}_A)$.
\end{itemize} 
 \end{corollary}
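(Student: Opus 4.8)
The plan is to deduce Corollary~\ref{ah} from Theorem~\ref{sps} together with the material of Section~3, essentially by transporting the $\mathfrak{F}_A$-statements to the $\mathfrak{r}_A$-setting via Theorems~\ref{oax} and~\ref{cls}. For part~(1), the first step is to show $\mathfrak{r}_A \subset A_H$: given $x \in \mathfrak{r}_A$, Theorem~\ref{oax} produces $y = x(I+x)^{-1} \in \tfrac12\mathfrak{F}_A$ with $\mathrm{oa}(x) = \mathrm{oa}(y)$, and since $y \in \mathfrak{F}_A \subset A_H$ and $A_H$ is a (closed) subalgebra, we get $x \in \mathrm{oa}(y) \subset A_H$. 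Hence $\overline{\mathrm{Span}}(\mathfrak{r}_A) \subset A_H$. For the reverse inclusion, note $\mathfrak{F}_A \subset \Rdb^+\mathfrak{F}_A \subset \mathfrak{r}_A$ (indeed $x \in \mathfrak{F}_A$ gives $x + x^* \geq x^*x \geq 0$, as recalled in the proof of Theorem~\ref{cls}), so $A_H = \overline{\mathrm{Span}}(\mathfrak{F}_A) \subset \overline{\mathrm{Span}}(\mathfrak{r}_A)$, using Theorem~\ref{sps} for the first equality. This gives $A_H = \overline{\mathrm{Span}}(\mathfrak{r}_A)$.

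Next I would prove $\mathfrak{r}_A = \mathfrak{r}_{A_H}$. The inclusion $\mathfrak{r}_{A_H} \subset \mathfrak{r}_A$ is immediate since $A_H \subset A$ and the condition $x + x^* \geq 0$ is computed in $A^1 + (A^1)^*$ in a way compatible with passing to subalgebras (as discussed at the start of Section~3, using that $A_H^1 \subset A^1$ completely isometrically, or simply that $\mathfrak{r}$ is intrinsic). For the reverse inclusion, the first step above already showed every $x \in \mathfrak{r}_A$ lies in $A_H$; and then $x + x^* \geq 0$ holds viewed inside $A_H$ as well (again by the compatibility of the real-part cone with subalgebras), so $x \in \mathfrak{r}_{A_H}$. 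This completes part~(1).

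For part~(2): if $A$ has a cai then $A = A_H$ (since $A$ is itself an approximately unital subalgebra of $A$, Theorem~\ref{sps} forces $A \subset A_H \subset A$), so by part~(1), $A = A_H = \overline{\mathrm{Span}}(\mathfrak{r}_A)$. Conversely, if $A = \overline{\mathrm{Span}}(\mathfrak{r}_A)$, then by part~(1) $A = A_H$, which is approximately unital by Theorem~\ref{sps}; so $A$ has a cai. Alternatively one can argue directly as in Proposition~\ref{chau}: density of $\mathrm{Span}(\mathfrak{r}_A)$ forces $A_r = A_\ell = A$ using the roots of elements of $\mathfrak{r}_A$ from Theorem~\ref{sect} (a normalization of $(x^{1/m})$ is a cai for $\mathrm{oa}(x)$, so $x \in A_r \cap A_\ell$ for $x \in \mathfrak{r}_A$), whence $A$ has a right and a left cai and therefore a cai.

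The steps here are all routine once Section~3 is in hand; the only point requiring a little care is the compatibility of the cone $\{x : x + x^* \geq 0\}$ with the passage between $A$ and the subalgebra $A_H$ — i.e.\ that $\mathfrak{r}_{A_H} = A_H \cap \mathfrak{r}_A$ — but this is exactly the well-definedness discussion opening Section~3 (Meyer's uniqueness of the unitization, extended to a unital complete order embedding), applied to the inclusion $A_H \hookrightarrow A$. So I expect no genuine obstacle; the proof is essentially an assembly of Theorems~\ref{oax}, \ref{cls}, \ref{sps} and Proposition~\ref{chau}.
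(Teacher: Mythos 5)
Your proposal is correct and follows essentially the same route as the paper: the paper deduces both parts by combining Theorem \ref{cls} (equivalently, the Cayley-transform mechanism of Theorem \ref{oax} that underlies it) with Theorem \ref{sps} and Proposition \ref{chau}, exactly the assembly you describe. Your extra care about ${\mathfrak r}_{A_H}=A_H\cap{\mathfrak r}_A$ via the well-definedness discussion at the start of Section 3 is the right justification for the step the paper calls ``obvious.''
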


\begin{proof} 
(1) \ The first assertion is obvious from Theorem \ref{cls} and Theorem  \ref{sps}.   So 
${\mathfrak r}_A \subset A_H$, and the second is now obvious.  

(2) \ Follows from Theorem \ref{cls} and Proposition \ref{chau}.
 \end{proof}

Thus a finite dimensional operator algebra has an identity of norm $1$ 
iff it is spanned by ${\mathfrak r}_A$, and it contains an
orthogonal projection iff  
${\mathfrak r}_A \neq (0)$.  For the latter, note that if ${\mathfrak r}_A \neq (0)$ then 
$A_H$ is a nontrivial unital algebra, and so its identity is a projection.
  
We recall that $A^2$ is the {\em closure} of the span of products of two elements from $A$.

\begin{corollary}  \label{jl}  If $A$ is an operator algebra such that  $A^2$ has a cai,
then $A_r = A_\ell = A_H =\overline{\spn}({\mathfrak F}_A)  = \overline{{\rm Span}}({\mathfrak r}_A) = A^2$.
\end{corollary}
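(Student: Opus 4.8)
The plan is to prove the string of equalities by establishing a cycle of inclusions
\[
A^2 \;\subseteq\; A_H \;\subseteq\; A_r \;\subseteq\; A^2, \qquad A^2 \;\subseteq\; A_H \;\subseteq\; A_\ell \;\subseteq\; A^2,
\]
which forces $A_r = A_\ell = A_H = A^2$, and then to read off the remaining two identifications $A_H = \overline{\spn}({\mathfrak F}_A) = \overline{\spn}({\mathfrak r}_A)$ directly from Theorem \ref{sps} and Corollary \ref{ah}(1), which are already available.

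First I would note that $A^2$ is a norm-closed subalgebra of $A$ (a product of two elements of the algebra $A^2$ can be regrouped as a product of two elements of $A$, so lies in $A^2$, and this passes to the closed span). Since $A^2$ has a cai by hypothesis, it is an approximately unital operator subalgebra of $A$; by the maximality clause of Theorem \ref{sps} (which says $A_H$ is the biggest such subalgebra) we obtain $A^2 \subseteq A_H$. For the reverse direction, I would use the defining formulas: since ${\mathfrak F}_A \subseteq A$, the definition $A_r = \overline{{\mathfrak F}_A\, A}$ gives $A_r \subseteq \overline{A\,A} = A^2$, and similarly $A_\ell = \overline{A\, {\mathfrak F}_A} \subseteq A^2$. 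Since $A_H = A_r A_\ell$ (recorded just before Proposition \ref{chau}) and $A_r$ is a right ideal, $A_H \subseteq A_r A \subseteq A_r$, and likewise $A_H \subseteq A_\ell$; hence $A_H \subseteq A_r \subseteq A^2$ and $A_H \subseteq A_\ell \subseteq A^2$. Combining with $A^2 \subseteq A_H$ closes both cycles, so $A_r = A_\ell = A_H = A^2$.

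It then remains only to quote that $A_H = \overline{\spn}({\mathfrak F}_A)$ by Theorem \ref{sps} and that $A_H = \overline{\spn}({\mathfrak r}_A)$ by Corollary \ref{ah}(1), which finishes the chain of equalities in the statement. I do not expect a genuine obstacle here: the argument is pure bookkeeping with the definitions $A_r = \overline{{\mathfrak F}_A A}$, $A_\ell = \overline{A {\mathfrak F}_A}$, $A_H = \overline{{\mathfrak F}_A A {\mathfrak F}_A} = A_r A_\ell$ and the maximality property of $A_H$. The only point requiring a word of care is the inclusion $A^2 \subseteq A_H$, where one must check that $A^2$, equipped with its cai, genuinely qualifies as one of the approximately unital subalgebras of $A$ governed by Theorem \ref{sps}; this is immediate once one observes $A^2$ is norm-closed and multiplicatively closed by its very construction.
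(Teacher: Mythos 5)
Your proof is correct and follows essentially the same squeeze of inclusions as the paper's ($A^2 \subseteq A_H \subseteq A_r \subseteq A^2$, and likewise with $A_\ell$), then reads off the remaining equalities from Theorem \ref{sps} and Corollary \ref{ah}(1). The only difference is minor: you obtain $A^2 \subseteq A_H$ from the maximality clause of Theorem \ref{sps}, whereas the paper closes the cycle via $A^2 = (A^2)_r \subseteq A_r$ using the argument of Proposition \ref{chau} applied to $A^2$; both routes are valid and rest on the same underlying facts.
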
 
 
\begin{proof}  Note that $A_r$ and $A_\ell$ are subsets of $A^2$,
and $(A^2)_r \subset A_r$.  If $A^2$ has a cai then by
the proof of Proposition \ref{chau} we have $A^2 = (A^2)_r \subset A_r \subset A^2$.
So $A_r = A^2$ and similarly $A_\ell  = A^2$.   The rest is clear from 
Theorem \ref{sps}, Corollary   \ref{ah} (1),  and the definition of $A_H$
(giving $A^2 \subset A_H \subset A_r$).  \end{proof}  

{\bf Remarks.} 1) \  If $A_r = A_\ell$ then this is the largest closed ideal with cai in $A$, since if $J$ is any
closed ideal with cai in $A$ then $J = J_r \subset A_r = A_\ell$.

2) \ A similar result to Corollary  \ref{jl} holds with $A^2$ replaced by $A^n$ for any $n \in \Ndb$, with
an almost identical proof.  

\medskip

One may use the above to extend 
much of the theory of operator algebras with cai, to arbitrary 
operator algebras.
 For example, there exist nice relationships between the states of a nonunital operator 
algebra $A$ (defined as the nonzero functionals
that extend to states on $A^1$) and quasistates on $A_H$  (we recall that a
quasistate is a state multiplied by a scalar in $[0,1]$).   If $\varphi$ is
a state on $A^1$ then $\varphi_{\vert A_H}$ is a  quasistate on $A_H$.  Indeed, $\varphi$
extends further to a state on $C^*(A^1)$, and if $p$ is the support 
projection of $A_H$ then $0 \leq \varphi(p) = \lim_t \, \varphi(e_t)$  for some 
cai for $A_H$, hence $\varphi(p) \leq \Vert \varphi_{\vert A_H} \Vert$.  Conversely, 
by the Cauchy-Schwarz inequality, if $x \in {\rm Ball}(A_H)$ then 
$|\varphi(x)| = |\varphi(pxp)|  \leq \varphi(p)$, so that $\varphi(p) \geq \Vert \varphi_{\vert A_H} \Vert$.
It follows by e.g.\ 2.1.18 and 2.1.19 in \cite{BLM} that  $\varphi_{\vert A_H}$ is a  quasistate on $A_H$. 
Note too that by \cite[Theorem 2.10]{BHN} a functional
$\psi$  on $A_H$ has a {\em unique} Hahn-Banach extension $\tilde{\psi}$ on
 $A^1$.  The latter will be a state if $\psi$ is, by the last line of 
2.1.19 in \cite{BLM} (taking $A_H^1 = A_H + \Cdb 1_{A^1}$, so that 
 $\tilde{\psi}(1) = 1 = \Vert \tilde{\psi} \Vert$).

From this it follows that 
results such as Lemma 2.9 in \cite{BRead} are true for 
nonunital operator algebras too.
However many results do not
carry over.  For example ${\mathfrak  F}_A$ need not be weak* dense in
   ${\mathfrak  F}_{A^{**}}$ if $A$ is nonunital (cf.\ \cite[Lemma 8.1]{BRead}).  A counterexample is given 
by $A$ equal to the functions in the disk 
algebra vanishing at $0$.
Here ${\mathfrak  F}_A = (0)$, but ${\mathfrak  F}_{A^{**}} \neq (0)$ since $A^{**}$ has many 
projections.  Indeed for example the function $f = z (z+1)/2 \in A$ peaks at $1$, and  
by Lebesgue's theorem $(f^n)$ converges weak* to the canonical 
copy $q$ in $C(\Tdb)^{**}$ of the characteristic function of $\{ 1 \}$,  and $q^2 = q$
and $\Vert q \Vert \leq 1$, so $q$ is
a nontrivial projection in $A^{**}$.  That the bidual has many projections can 
also be seen since the bidual of its unitization, the disk algebra, does;
 and if $A$ is any nonunital Arens regular Banach algebra, and $A^1$ is a
unitization of $A$, then as soon as $(A^1)^{**}$ has nontrivial projections, then so does
$A^{**}$.  To see this consider the continuous homomorphism  
$\chi : A^1 \to \Cdb : a + c 1 \mapsto c$, whose 
canonical weak* extension  $\tilde{\chi} : (A^1)^{**}  \to \Cdb$ has kernel  $A^{\perp \perp}$.
If $p$ is a nontrivial projection in $(A^1)^{**}$ then we have $\tilde{\chi}(p) = 1$ or $0$, and in the former case 
$\tilde{\chi}(1-p) = 0$.  So either $p$ or  $1-p$  is in Ker$(\tilde{\chi}) = A^{\perp \perp}$.

\section{Noncommutative peak interpolation again}

The following is a generalization of Theorem \ref{peakth} and 
Corollary \ref{peakth2}.  The  proof follows similar lines, but is a bit deeper.

\begin{theorem} \label{peakthang}   Suppose that $A$ is an operator algebra
(not necessarily approximately unital),
and that  $q$ is a closed projection in $(A^1)^{**}$.
If $b \in A$ with $b q= qb$, then $b$ achieves its distance to the
right ideal $J = \{ a \in A : q a = 0 \}$.
 If further $\Vert b q \Vert \leq 1$,
then  there exists an element $g \in {\rm Ball}(A)$ with $g q =  q g = b q$.
\end{theorem}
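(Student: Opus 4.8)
The plan is to mimic the proof of Theorem~\ref{peakth}, reducing to the approximately unital (indeed unital ideal) case where proximinality is automatic via the $M$-ideal property. First I would pass to the unitization $A^1$, where $q$ is a closed projection, so $p = 1 - q$ is an open projection in $(A^1)^{**}$; let $J_1 = \{a \in A^1 : qa = 0\} = p(A^1)^{**} \cap A^1$ be the associated r-ideal in $A^1$ and $D = p(A^1)^{**}p \cap A^1$ the associated HSA. The hypothesis $bq = qb$ gives $bp = pb$, hence (as in the proof of Theorem~\ref{peakth}) $bD \subset D$ and $Db \subset D$. The standard fact about r-ideals in a unital algebra gives $d(b, J_1) = \Vert (1-p) b \Vert = \Vert qb \Vert = \Vert bq \Vert$; I would also want $d(b,J) = d(b,J_1)$, which follows because $J = J_1 \cap A$ and, more to the point, because any best approximant produced will automatically land in $A$ (it will lie in $D \subset \overline{bA^1b} \subset A$ since $b \in A$).

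Next I would form $C$, the closed unital subalgebra of $A^1$ generated by $b$, $D$, and $1$. Then $D$ is an approximately unital \emph{two-sided} ideal of $C$ (using $bD, Db \subset D$), with support projection $p$ in $C^{**} \cong C^{\perp\perp} \subset (A^1)^{**}$. Since approximately unital ideals are $M$-ideals (Effros--Ruan, \cite[Theorem 4.8.5(1)]{BLM}), they are proximinal \cite{HWW, BAIC}, so there is $d \in D$ with $\Vert b - d \Vert = d(b, D)$. Now $d(b,D) = \Vert b(1-p) \Vert$ by the same unital r-ideal fact applied to the ideal $D$ of $C$, and this equals $d(b, J_1)$. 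Since $d \in D \subset J$ (as $qd = 0$) and $d \in A$ (because $D = p(A^1)^{**}p \cap A^1$ consists of elements $pxp$ with $x \in A^1$, and $p \in A^{\perp\perp}$ forces $pxp \in A$), we get that $b$ achieves its distance to $J$, with best approximant $d$.

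For the second assertion, set $g = b - d$. Then $gq = bq - dq = bq$ (since $dq = 0$), and $qg = qb$; but $qb = bq$, and we still must check $qg = bq$, which holds since $qg = qb - qd = qb = bq$. Finally $\Vert g \Vert = \Vert b - d \Vert = d(b, J_1) = \Vert bq \Vert \leq 1$, so $g \in \mathrm{Ball}(A)$. The main obstacle I anticipate is the bookkeeping around the unitization: verifying carefully that $J$ (the right ideal in $A$) and $J_1$ (the r-ideal in $A^1$) have the same distance function from $b$, and that the best approximant $d \in D$ genuinely lies in $A$ rather than merely in $A^1$ — this is where one uses that $p \in A^{\perp\perp}$ together with $A^1 \cap A^{\perp\perp} = A$. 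A secondary subtlety is confirming that $D$ is genuinely a \emph{two-sided} ideal in the algebra $C$ generated by $b$ and $D$; this is exactly where the commutation hypothesis $bq = qb$ is essential, since without it $bD \subset D$ can fail.
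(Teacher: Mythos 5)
Your overall strategy is the right one and matches the paper's skeleton (pass to $A^1$, form the hereditary subalgebra supported by $1-q$, adjoin $b$ and $1$ to get a unital algebra $C$ in which that HSA is an approximately unital two-sided ideal, hence an $M$-ideal, hence proximinal). But there is a genuine gap at exactly the point you flagged as the "main obstacle": your argument that the best approximant lands in $A$ is wrong. You assert that $p=1-q\in A^{\perp\perp}$, but the hypothesis of Theorem \ref{peakthang} is only that $q$ is closed in $(A^1)^{**}$ with respect to $A^1$; neither $q$ nor $1-q$ need lie in $A^{\perp\perp}$. (This is precisely how Theorem \ref{peakthang} differs from Theorem \ref{peakth}, where $p$ is assumed open in $A^{**}$, i.e.\ $p\in A^{\perp\perp}$, which is what makes $D=pA^{**}p\cap A$ sit inside $A$ there.) Consequently $\tilde D=(1-q)(A^1)^{**}(1-q)\cap A^1$ is in general \emph{not} contained in $A$: take $A=c_0\subset c=A^1$ and $q$ the indicator of a finite nonempty subset $E$ of $\Ndb$; then $\tilde D=\{x\in c: x|_E=0\}$ contains elements of $c\setminus c_0$. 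So proximinality of $\tilde D$ in $C$ only hands you an approximant in $A^1$, and your two justifications for membership in $A$ (the parenthetical "$D\subset\overline{bA^1b}\subset A$", and the $p\in A^{\perp\perp}$ claim) both fail.

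The paper repairs this with two extra steps that your proposal is missing. First it sets $I=C\cap A$ (an ideal in $C$) and $D=I\cap\tilde D=\{x\in I: qx=0\}$, and shows that the $L$-projection $P$ on $C^*$ associated with the $M$-ideal $\tilde D$ (namely multiplication by $q$) maps $I^\perp$ into $I^\perp$; by \cite[Proposition I.1.16]{HWW} this forces $D=I\cap\tilde D$ to be an $M$-ideal in $I$, hence proximinal in $I$, which is what actually produces an approximant inside $A$. Second, it identifies the distance: it invokes \cite[Proposition 3.1]{Hay} twice, once with $X=A$ to get $d(x,J)=\Vert qx\Vert$ for the right ideal $J$ of $A$ (your chain $d(b,J)\geq d(b,J_1)=\Vert qb\Vert$ plus exhibiting an approximant of norm distance $\Vert qb\Vert$ would also serve here, but only once the approximant is known to lie in $A$), and once with $X=I$ to get $d(b,D)=\Vert qb\Vert$. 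Without the restriction-of-$M$-ideals step your argument does not close; with it, the rest of your write-up (the verification that $\tilde D$ is a two-sided ideal in $C$ via $bq=qb$, and the final computation $gq=qg=bq$ with $\Vert g\Vert=\Vert qb\Vert\leq 1$) is correct.
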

\begin{proof}  
If $\varphi \in A^\perp$, then
$(q a)(\varphi) = 0$ for all $a \in A$, since $q$ is weak* approximable by
elements in $A^1$, and $A^1 A \subset A$.  Thus $q$ 
satisfies the hypothesis of \cite[Proposition 3.1]{Hay},
with $X = A$.  It follows from that result that if
$J = (1-q) (A^1)^{**} \cap A$ then $d(x,J) = \Vert q x \Vert$ for all $x \in A$.

Next, let $\tilde{D} = (1-q) (A^1)^{**} (1-q)  \cap A^1$.  By \cite[Section 2]{BHN},
$\tilde{D}$ is a HSA in $A^1$, and is approximately unital.  Let $C$ be the closed
subalgebra of $A^1$ generated by $\tilde{D}, b,$ and $1$.  Then $\tilde{D}$ is an ideal
in $C$: note for example
that if $d \in \tilde{D}$ then $db = (1-q) d b = d (1-q) b =  d b (1-q)$,
so $d b \in \tilde{D}$.   Since
$\tilde{D}^{\perp \perp} = (1-q) (A^1)^{**} (1-q)$, by
\cite[Section 2]{BHN}, we have that
$1-q \in \tilde{D}^{\perp \perp} \subset C^{\perp \perp}$,
and so $q \in C^{\perp \perp}$.  Indeed
 $q$ is in the commutant of $C$,
hence in the center of $C^{\perp \perp}$.
Thus $\tilde{D}^{\perp \perp} = (1-q) C^{\perp \perp}$,
and so $\tilde{D}  =
(1-q) C^{**}  \cap C$ is an $M$-ideal in $C$.  The associated
$L$-projection $P$
 onto the subspace $\tilde{D}^\perp$ of $C^*$, is multiplication by $q$,
since multiplication by $1-q$ is the $M$-projection from $C^{**}$
onto $\tilde{D}^{\perp \perp}$.   Let $I
= C \cap A$.
It is an easy exercise that $I$ is an ideal in $C$, 
for example using the fact that $\tilde{D} A \subset A^1 A \subset A$ and
similarly $A\tilde{D}  \subset A$.  
Let $D = I \cap      
\tilde{D} = \{ x \in I : q x = 0 \}$.

If $x \in I$ and
$\varphi \in I^\perp$ then $q \varphi(x) = \lim_t \varphi(c_t x) = 0$ if $(c_t)$ is a net in
$C$ with weak* limit $q$, since $c_t x \in I$.   We will make two deductions from this.
First, $P(I^\perp) \subset I^\perp$.
 So by \cite[Proposition I.1.16]{HWW}, we have that $D = I \cap \tilde{D}$ is an $M$-ideal in $I$, hence it is proximinal in $I$.  
Second, we deduce from \cite[Proposition 3.1]{Hay} with $X = I$, that         
$d(x,D) = \Vert q x \Vert$ for all $x \in I$.
So $d(b,D) = \Vert q b \Vert = d(b,J)$, the latter from the first paragraph of the proof.
By proximinality there exists a $y \in D \subset J$ such that
$\Vert b - y \Vert = d(b,D) = \Vert q b \Vert = d(b,J)$.   We finish as before: 
setting $g = b-y$ then $q g = g q = q b$, and so on.
  \end{proof}

\begin{theorem} \label{babint}   Suppose that $A$ is an operator algebra
(not necessarily approximately unital), a subalgebra of a unital $C^*$-algebra $B$.
Identify $A^1 = A + \Cdb 1_B$.  Suppose
 that  $q$ is a closed projection in $(A^1)^{**}$.
   If $b \in A$ with $b q= qb$,
 and $q b^* b q  \leq q d$ for an invertible positive $d \in B$ which commutes with $q$,
then  there exists an element $g \in A$ with $g q =  q g = b q$, and $g^* g \leq d$.
\end{theorem}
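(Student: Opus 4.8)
The plan is to prove this as the noncommutative analogue of the classical peak interpolation with domination (II.12.5 in \cite{Gam}) recalled in the introduction, with the positive invertible $d$ playing the role of $f^2$ and the closed projection $q$ the role of the peak set $E$; accordingly I would imitate the classical device of multiplying an interpolant by a high power of a peaking function, taking Theorem \ref{peakthang} as the starting point. First, the interpolation part of Theorem \ref{peakthang} (discarding the norm bound there) gives some $h_0 \in A$ with $h_0 q = q h_0 = b q$. Since $h_0 q = q h_0$, taking adjoints shows $q$ commutes with $h_0^*$, hence with $h_0^* h_0$; and $q$ commutes with $d$ by hypothesis, so $q$ commutes with the selfadjoint element $h_0^* h_0 - d$ of $B$, with $q(h_0^* h_0 - d)q = q b^* b q - q d q \le 0$. (The naive approach of rescaling by $d^{-1/2}$ so as to reduce the bound $g^*g \le d$ to a norm-$1$ bound and then quoting Theorem \ref{peakthang} fails to return the interpolant to $A$, so a genuinely ``interior'' damping argument is needed, as in the function-algebra case.)

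I would first dispatch the strict case $q b^* b q \le q d - \epsilon q$ for some $\epsilon > 0$, the general case following by applying this to $d + \epsilon 1$ in place of $d$ and letting $\epsilon \downarrow 0$ at the end. In the strict case $q(h_0^* h_0 - d)q \le -\epsilon q$, so the ``bad-set projection'' $s := \chi_{[-\epsilon/2,\infty)}(h_0^* h_0 - d)$, a closed projection of $B^{**}$ which is moreover compact (it is dominated by a positive contraction of $B$ of the form $\psi(h_0^* h_0 - d)$ for a suitable continuous $\psi$ with values in $[0,1]$ and $\psi \equiv 1$ on $[-\epsilon/2,\infty)$), satisfies $sq = qs = 0$. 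Now I want a peaking-type element $v \in \frac{1}{2}{\mathfrak F}_{A^1}$ with $vq = qv = q$ (so $v^n q = q = q v^n$ for all $n$) and with $\|v^n s\| \to 0$ as $n \to \infty$ — the noncommutative form of ``$|w| = 1$ on $E$, $|w| < 1$ off $E$, hence $|w| \le c < 1$ on the disjoint compact bad set, hence $|w^n| \le c^n \to 0$ there''; such a $v$ should be available from the peak-projection machinery for the closed projection $q$ and the disjoint compact projection $s$, as in \cite{BHN, BRead, BNII} and \cite[Proposition 3.1]{Hay}. Set $g := h_0 v^n$ for $n$ large. Then $g \in A$ (as $h_0 \in A$ and $v^n \in A^1$), and $g q = h_0 v^n q = h_0 q = b q$, while $q g = q h_0 v^n = b q v^n = b q$.

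It remains to check $g^* g \le d$, and this is where invertibility of $d$ enters, exactly as strict positivity of $f$ does classically: $d \ge \delta 1$ for some $\delta > 0$. The estimate splits into the three regimes of the classical proof: the $q$-corner is automatic, $q g^* g q = (g q)^*(g q) = q b^* b q \le q d q$; the $s$-part $s g^* g s$ is dominated by $\|h_0\|^2 \|v^n s\|^2 s$, hence is $\le \delta s \le s d s$ once $n$ is large; and off $q$ and $s$ one has $h_0^* h_0 \le d$ already, with $\|v^n\| \le 1$. Welding these into the single operator inequality $g^* g \le d$ is the crux and the expected main obstacle, since noncommutativity provides no honest partition of unity into ``near $q$'', ``$s$'', and ``good part''; it must be done through a Schur-complement/positivity argument built on the spectral resolution of $h_0^* h_0 - d$ together with the above smallness bounds. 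Finally, the passage $\epsilon \downarrow 0$ requires extracting a weak* cluster point of the $g_\epsilon$ and checking it still lies in $A$, which is automatic if one records $g_\epsilon = h_0 - y_\epsilon$ with the $y_\epsilon$ in a fixed bounded subset of the HSA of $A^1$ at $1-q$. I also expect producing the separating element $v$ inside $A^1$ (rather than merely inside $B$), with the power decay $\|v^n s\| \to 0$, to be a genuine part of the difficulty.
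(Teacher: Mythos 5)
Your route is not the paper's. The paper offers no self-contained proof of Theorem \ref{babint}: it states that the proof is a ``small modification'' of the proof of Theorem \ref{peakthang} --- i.e.\ the $M$-ideal/proximinality argument (exact distance formula $d(b,D)=\Vert qb\Vert$ from \cite[Proposition 3.1]{Hay} plus proximinality of the $M$-ideal $D$ in $I$, adapted to the weight $d$, the key point being that the relevant $L$-projection is multiplication by $q$, which commutes with $d$) --- and defers the details to \cite{Bnew}. You instead propose the Bishop powers-of-a-peaking-function method, which in this paper is used only for the one-sided Corollary \ref{peakth4}, and there only under the much stronger hypotheses that $|b|$ and $d$ commute with the peaking element $a$ itself, not merely with $q$.

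That difference is not cosmetic; it is where your argument has a genuine gap. First, the decay $\Vert v^n s\Vert\to 0$ is obtained in Lemma \ref{peaktr} by writing $\Vert a^n(e-u)\Vert=\Vert (a(e-u))^n\Vert$, which requires the cut-down projection to commute with the peaking element; your $s$ is a spectral projection of $h_0^*h_0-d$ and there is no reason a Urysohn-type $v\in A^1$ for the pair $(q,1-s)$ commutes with $s$ (the noncommutative Urysohn lemma, Theorem \ref{ncuch}(1), gives one element with $\Vert vs\Vert$ small, not power decay). Second, and fatally, even granting $\Vert v^ns\Vert$ small and $h_0^*h_0\le d+Ms$, you get $g^*g=v^{n*}h_0^*h_0v^n\le v^{n*}dv^n+(\text{small})$, and $v^{n*}dv^n\le d$ simply fails for a contraction $v$ that does not commute with $d$; the three corner estimates you list do not determine positivity of $d-g^*g$ (the off-diagonal corners are uncontrolled), and no Schur-complement device is supplied. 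The paper itself flags this obstruction: Remark 2 after Corollary \ref{peakth4} gives an explicit $2\times 2$ counterexample to the powers method when $d$ fails to commute with the peaking element, and records as \emph{open} whether commutativity with $q$ alone suffices even for the one-sided statement --- which is exactly the situation your $v$ is in. Finally, your $\epsilon\downarrow 0$ step is also unjustified: a weak* cluster point of the $g_\epsilon$ (equivalently of the $y_\epsilon$) lies in $A^{**}$, not in $A$; bounded norm-closed convex subsets of $A$ are weakly closed but not weakly compact, so the nested intersection you need may be empty. The proximinality route avoids all three problems by producing an exact best approximant in one step.
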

 
The proof of this is a small modification of the last proof \cite{Bnew}.
 We will however prove here
 a one-sided variant of Theorem \ref{babint}. 

\begin{lemma} \label{peaktr}   Let  $A$ be a unital operator
algebra, and suppose that $q$ is a peak projection, the peak for
an element $a \in {\rm Ball}(A)$.   Let $B$ be a $C^*$-algebra generated by $A$.
Suppose that $b \in B$ and $|b|$ commutes with $a$.
Suppose also that $q b^* b q \leq  q d q$,
for some invertible positive $d \in B$ which commutes with $a$.
Given an open projection $u \geq q$ which commutes with $a$, and
any $\epsilon > 0$, then there exists an $n \in \Ndb$
with $\Vert  b d^{-\frac{1}{2}} a^n \Vert \leq 1 + \epsilon$, and
$\Vert  b  d^{-\frac{1}{2}} a^n (1-u) \Vert < \epsilon$.
\end{lemma}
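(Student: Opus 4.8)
\textbf{Proof plan for Lemma \ref{peaktr}.}

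The plan is to exploit that $a$ peaks at $q$, so that powers $a^n$ converge weak* to $q$, together with the Lebesgue-type domination machinery available from the fact that all the relevant elements ($a$, $|b|$, $d$, $u$, and hence $d^{-1/2}$ and continuous functions of these) lie in a commutative $C^*$-subalgebra of $B$. More precisely, first I would observe that since $|b|$ commutes with $a$, and $d$ and $u$ commute with $a$, the (unital) $C^*$-algebra $C$ generated by $a$, $|b|$, $d$, $u$ (and the identity) is abelian, so we may model it as $C(K)$ for a compact Hausdorff $K$, with $a$ corresponding to a function with $\|a\|_\infty \le 1$. The peak projection $q$ then corresponds (in $C^{**}$, via the bidual picture) to the characteristic function of the peak set $E = \{ t \in K : a(t) = 1 \}$ (using $q = \lim_n a^n$ weak* and Lemma \ref{haye}; one checks $a$ peaks at $q$ forces $|a| < 1$ off $E$ on the part of the spectrum seen by $C$). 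The hypothesis $q b^*b q \le q d q$ becomes, on $E$, the pointwise inequality $|b|^2 \le d$, i.e. $|b| d^{-1/2} \le 1$ on $E$; and $u \ge q$ with $u$ open corresponds to an open set $U \supseteq E$.

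Next I would set $h = b d^{-1/2} \in B$ and note $\|h\| = \| |b| d^{-1/2} \|$ is some finite constant $M$ (as $d$ is invertible), with the key feature that the \emph{scalar-valued} function $|h| = |b| d^{-1/2} \in C(K)$ satisfies $|h| \le 1$ on the peak set $E$. Since $a$ peaks at $q$, for the closed set $K \setminus U$ (disjoint from $E$) we have $\sup_{K \setminus U} |a| = \delta < 1$; hence $\| |h| a^n \|_{\infty, K \setminus U} \le M \delta^n \to 0$, which gives the second assertion $\| b d^{-1/2} a^n (1-u) \| < \epsilon$ for $n$ large, once one argues that multiplying by $1-u$ on the right in $B$ is dominated by the sup over $K\setminus U$ (this uses that $1-u$ is the closed projection for $K \setminus U$ and $|h|$, $a$ lie in the commutative algebra, so $h a^n (1-u)$ has norm equal to a sup of $|h a^n|$ over the support of $1-u$, which sits over $\overline{K \setminus U} \subseteq K \setminus E$). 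For the first assertion, $\| b d^{-1/2} a^n \| = \| h a^n \|$: on $\overline U$ one has $|h| \le 1 + \epsilon'$ by continuity of $|h|$ and $|h| \le 1$ on $E$ (choosing $U$, or rather restricting attention via $u$, small enough — but $u$ is given, so instead one uses that $|h a^n| \le |h|$ wherever $|a| \le 1$, and splits $K = U' \cup (K \setminus U')$ for a small neighborhood $U'$ of $E$ on which $|h| \le 1 + \epsilon$, controlling the complement by $M \delta'^n \to 0$). Combining, $\| h a^n \| \le 1 + \epsilon$ for $n$ large.

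The main obstacle I anticipate is making rigorous the passage between the $C^*$-algebraic statements in $B$ (and $B^{**}$) and the pointwise/function-algebra statements in the commutative subalgebra $C \cong C(K)$ — in particular, justifying that $\| h a^n (1-u) \|_B$ is governed by $\sup |h a^n|$ over the appropriate portion of the Gelfand spectrum, given that $u$ is an open projection in $(A^1)^{**}$ rather than obviously in $C^{**}$. The cleanest fix is to enlarge $C$ to contain $u$ from the start (legitimate since $u$ commutes with $a$, and we may also assume $u$ commutes with $|b|$ and $d$ after replacing $u$ by a suitable element of the bicommutant if needed, or simply note the hypotheses already place everything in a single commutative $C^*$-algebra), pass to $C = C(K)$, identify $E$ and $U$ as above, and then the norm computations reduce to elementary estimates on continuous functions, with $a^n \to \chi_E$ pointwise off $E$ giving the decay. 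A secondary point to handle carefully is that $|b|d^{-1/2}$ is the relevant scalar, not $\|bd^{-1/2}x\|$ for Hilbert space vectors $x$; but since $q b^* b q \le qdq$ with everything commuting, $|b| \le d^{1/2}$ on $E$ genuinely holds, so $|h| \le 1$ on $E$ as needed.
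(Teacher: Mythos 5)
There is a genuine gap, and it occurs at the very first step: your whole argument runs through a Gelfand model $C\cong C(K)$ of ``the commutative $C^*$-algebra generated by $a$, $|b|$, $d$, $u$,'' but no such commutative algebra is available. First, $a$ is merely an element of ${\rm Ball}(A)$ for a (nonselfadjoint) operator algebra $A$; it is not assumed normal, so even $C^*(a)$ need not be abelian, and the picture ``$a$ is a function with peak set $E=\{a=1\}$ and $|a|<1$ off $E$'' simply does not exist. Second, the hypotheses only assert that each of $|b|$, $d$, $u$ commutes with $a$ --- not that they commute with one another --- so, contrary to your remark, the hypotheses do not place everything in a single commutative $C^*$-algebra; relatedly, the identity $|bd^{-1/2}|=|b|\,d^{-1/2}$ that you use to define your scalar function $|h|$ requires $|b|$ and $d$ to commute, which is not given. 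These are not technicalities one can patch by ``enlarging $C$'': the lemma is genuinely a statement about a non-normal contraction, and it is applied later (Corollary \ref{peakth4}) in exactly that generality.

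Your pointwise intuition does survive, but in operator-theoretic disguise, and that is how the paper proceeds. The substitute for ``$\sup_{K\setminus U}|a|<1$'' is Lemma \ref{haye}: since $1-u$ is a closed projection $\leq 1-q$, one has $\Vert a(1-u)\Vert =r<1$, and then $\Vert a^n(1-u)\Vert=\Vert (a(1-u))^n\Vert\leq r^n\to 0$, using only that $a$ commutes with $u$; this gives the second inequality. For the first, set $b'=bd^{-1/2}$ and deduce $\Vert\, |b'|\,q\Vert\leq 1$ from $qb^*bq\leq qdq$ (using that $q$, a weak* limit of powers of $a$, commutes with $d^{\pm 1/2}$). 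The role of your neighbourhood $U'$ of $E$ on which $|h|\leq 1+\epsilon$ is played by the spectral projection $p$ of the single positive element $|b'|$ for $[0,1+\epsilon)$: this is open, commutes with $a$ (since $a$ commutes with $|b'|^2=d^{-1/2}b^*bd^{-1/2}$), and satisfies $pq=q$, so $1-p$ is closed and $\leq 1-q$, whence $\Vert a^n(1-p)\Vert\to 0$ by Lemma \ref{haye} again; then $\Vert\,|b'|\,a^n\Vert\leq\max\{\Vert b'\Vert\,\Vert a^n(1-p)\Vert,\ \Vert\,|b'|\,p\Vert\}\leq 1+\epsilon$ for large $n$. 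If you want to keep a functional-calculus flavour, the correct move is to apply it only to the one positive element $|b'|$, and to channel every statement about $a$ through Lemma \ref{haye}.
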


\begin{proof}   We work in $B$; let $e = 1_{B}$ and $f = d^{\frac{1}{2}}$.
  By Lemma \ref{haye} we have
$\Vert a (e-u) \Vert = r < 1$, and so
$$\Vert   a^n (e-u) \Vert =  \Vert  (a (e-u))^n \Vert \leq r^n \to 0 .$$
  So the last inequality in the Lemma
will be easy.
 For the first, since $a$ and hence $q$ commutes with
$f, f^{-1},$ and $|b|$, we have
$$f^{-1} q b^* b qf^{-1}
= q f^{-1} b^* b f^{-1} q \leq f^{-1} qdq f^{-1}  = q ,$$ so  that
$\Vert q |b f^{-1}|^2  q \Vert
\leq 1$.    Let $b' = b f^{-1}$, so that $\Vert |b'|q \Vert
\leq 1$,
and let $p$ be a spectral projection for $[0, 1+ \epsilon)$ for $|b'|$.
Then $p$ is open, and it commutes with $f^{-1} b^* b f^{-1} = |b'|^2$ and $a^n$
(since $a$ commutes with $f^{-1} b^* bf^{-1} = |b'|^2$).  Also $p q = q$
since $\Vert |b'| q \Vert \leq 1$ (this is a nice exercise in the
Borel functional calculus, or follows easily
from \cite[Corollary 5.6.31]{KR}).   Then
$$\Vert b f^{-1}    a^n \Vert = \Vert |b'|  \, a^n \Vert = \max \{  \Vert |b'|
a^n (e- p) \Vert  ,  \Vert |b'|  \, a^n p \Vert  \}
\leq  \max \{  \Vert b'  \Vert \Vert a^n (e- p) \Vert  ,
 \Vert |b'| \, p \Vert  \},$$
which is dominated by $1 + \epsilon$
for $n$ large enough,
since by the first lines of the present proof we can choose $n$ with
$\Vert a^n (e- p) \Vert  = r^n < (1 + \epsilon)/ \Vert b'  \Vert$.
 \end{proof}

\begin{corollary} \label{peakth4}
Suppose that $A$ is an  operator
algebra (possibly not approximately unital), and
that $q$ is a projection in $(A^1)^{**}$ such that $q$ is the
peak of an element $a \in {\rm Ball}(A^1)$.
Suppose that $b \in A$, such that $|b|$ commutes with $a$.
Let $B$ be a unital $C^*$-algebra containing $A$,
and suppose also that $q b^* b q \leq  q d q$,
for some invertible positive  $d \in B$ which commutes with $a$.
Then  there exists an element $x \in A$ with $x^* x \leq d$
and $x q
 = b q$. \end{corollary}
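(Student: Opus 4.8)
\emph{Approach.} The plan is to produce $x$ as a norm limit of elements $ba^n\in A$, the main tool being Lemma~\ref{peaktr} (applied with $A^1$ playing the role of its unital algebra).

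\emph{Preliminaries, and the ``ideal solution''.} Since $d$ is selfadjoint and commutes with $a$, it commutes with $a^*$, hence with $q=u(a)$, the weak* limit of $(a^n)$. Thus $qdq=dq$ and $(1-q)d=(1-q)d(1-q)\geq 0$, so $qdq\leq d$; and writing $f=d^{1/2}$ we get $\Vert bf^{-1}q\Vert^2=\Vert f^{-1}qb^*bqf^{-1}\Vert\leq\Vert f^{-1}qdqf^{-1}\Vert=\Vert q\Vert\leq 1$. Consequently $bq$ lies in $A^{**}$ (because $A$ is an ideal in $A^1$, so $A^{\perp\perp}$ is an ideal in $(A^1)^{**}$), with $(bq)q=bq$ and $(bq)^*(bq)=qb^*bq\leq qdq\leq d$: so the whole content of the corollary is to realise this ``ideal solution'' by an honest element of $A$.

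\emph{An approximate solution in $A$.} For each $\delta>0$, apply Lemma~\ref{peaktr} with $u=1$ (and a small tolerance $\delta'$): there is $n$ with $\Vert bd^{-1/2}a^n\Vert\leq 1+\delta'$. Put $x_\delta:=ba^n\in A$. Then $x_\delta q=ba^nq=bq$, and, using that $d^{-1/2}$ commutes with $a^n$ and $a^{n*}$, $x_\delta^*x_\delta=a^{n*}b^*ba^n=d^{1/2}\bigl(a^{n*}d^{-1/2}b^*bd^{-1/2}a^n\bigr)d^{1/2}=d^{1/2}(bd^{-1/2}a^n)^*(bd^{-1/2}a^n)d^{1/2}\leq(1+\delta')^2 d\leq d+\delta\,1$ once $\delta'$ is small enough (since $(1+\delta')^2 d\leq d+(2\delta'+(\delta')^2)\Vert d\Vert\,1$). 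So for every $\delta>0$ there is $x_\delta\in A$ with $x_\delta q=bq$ and $x_\delta^*x_\delta\leq d+\delta\,1$.

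\emph{The main obstacle: upgrading to $x^*x\leq d$.} This cannot be done by a soft compactness argument, since the $x_\delta$ above are of the form $ba^{n(\delta)}$ with $n(\delta)\to\infty$, so they converge only weak*, to $bq$, and a soft argument thus lands one in $A^{**}$ rather than in $A$. What is needed is a norm-convergent construction, and here I would iterate Lemma~\ref{peaktr} itself: fix a summable sequence $\epsilon_k\downarrow 0$ together with a suitable sequence of open projections $u_k$ built from the peak $q$ of $a$ and commuting with $a$ (the non-normality of $a$ makes selecting these delicate), and at stage $k$ feed $(b,d,u_k,\epsilon_k)$ into Lemma~\ref{peaktr} to obtain $n_k$ with $\Vert bd^{-1/2}a^{n_k}\Vert\leq 1+\epsilon_k$ and $\Vert bd^{-1/2}a^{n_k}(1-u_k)\Vert<\epsilon_k$; from these one wants to build $x_k\in A$ with $x_kq=bq$ for all $k$, $x_k^*x_k\leq(1+\epsilon_k)^2 d$, and $\sum_k\Vert x_{k+1}-x_k\Vert<\infty$, whereupon $x:=\lim_k x_k\in A$ has $xq=bq$ and $x^*x=\lim_k x_k^*x_k\leq d$. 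The hard part — and the step I expect to resist — is the norm-Cauchy estimate for $(x_k)$: the ``$(1-u_k)$'' clause of Lemma~\ref{peaktr} is exactly what controls the piece of the increment $x_{k+1}-x_k$ lying away from $q$, while controlling its piece near $q$ forces a careful interplay between the growth of the $n_k$ and the shrinking of the neighbourhoods $u_k$ — the subtlety being that an open neighbourhood of $q$ with $q$ removed is again only open, so $a^n$ need not contract on it. Once this convergence is in hand, the corollary follows.
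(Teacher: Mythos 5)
There is a genuine gap, and you have in fact flagged it yourself: everything up to and including the ``approximate solution'' $x_\delta=ba^{n}$ with $x_\delta q=bq$ and $x_\delta^*x_\delta\leq d+\delta 1$ is correct and matches the paper's setup (Lemma~\ref{peaktr} applied in $A+\Cdb 1_B$), but the passage from $\leq d+\delta$ to $\leq d$ is exactly the content of the corollary, and your proposed iteration is left unfinished at its decisive step. Moreover the scheme you sketch --- produce individual elements $x_k$ with $x_k^*x_k\leq(1+\epsilon_k)^2d$ and force them to be norm-Cauchy --- is not the right mechanism, and the projections you propose to feed into Lemma~\ref{peaktr} (``built from the peak $q$ of $a$'') are adapted to the wrong operator.

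The missing idea is Bishop's convex-series trick. Set $b'=bd^{-1/2}$ and let $u_n$ be the \emph{spectral projection of $|b'|$} for $[0,1+2^{-(n+1)})$; these are open, dominate $q$ (since $\Vert\,|b'|q\Vert\leq 1$), and commute with $a$ because $a$ commutes with $|b'|^2$ --- this is where the hypothesis that $|b|$ and $d$ commute with $a$ is really spent. Lemma~\ref{peaktr} then yields $m_k$ with $g_k:=b'a^{m_k}$ satisfying $\Vert g_k\Vert\leq 1+2^{-(k+1)}$ and $\Vert g_k(1-u_k)\Vert\leq\frac12$. Now take the \emph{average} $g=\sum_k 2^{-k}g_k$ and $x=gd^{1/2}$. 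Convergence is trivial (geometric weights times bounded terms), and $gq=b'q$ automatically since the weights sum to $1$, whence $xq=bq$. The exact bound $\Vert g\Vert\leq 1$ --- which no single $g_k$ satisfies --- comes from decomposing $1=(1-u_2)+\sum_{n\geq 3}(u_{n-1}-u_n)+\wedge_ku_k$ and estimating $\Vert gs\Vert$ on each piece $s$ by splitting the series at index $n$: the late terms are $\leq\frac12$ there by the $(1-u_k)$ clause, the early terms are $\leq 1+2^{-n}$ there because $|b'|u_{n-1}$ has norm $\leq 1+2^{-n}$, and the weighted combination $(1+2^{-n})(1-2^{-(n-1)})+2^{-n}$ is $\leq 1$. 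Then $x^*x=d^{1/2}g^*gd^{1/2}\leq d$. So the obstruction you correctly identified (one cannot pass to a weak* limit, and individual terms refuse to have norm $\leq 1$) is circumvented not by a Cauchy-sequence argument but by averaging with weights $2^{-k}$ against a spectral decomposition of $|b'|$; your outline does not contain this step and does not close without it.
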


\begin{proof}  We follow the classical idea due to Bishop
(see II.12.5 in \cite{Gam}), with some variations of our own.
Set $f = d^{\frac{1}{2}}$.
  Letting $\epsilon = \frac{1}{4}$,
by Lemma \ref{peaktr} with respect to $A + \Cdb 1_B$,
there exists $m_1 \in \Ndb$
such that if $g_1 = b f^{-1}  a^{m_1}$
then $\Vert g_1 \Vert \leq \frac{5}{4}$, and $g_1 q = b f^{-1} q
= b q f^{-1}$.  Let $u_n$ be the spectral projection
for $[0, 1 + \frac{1}{2^{n+1}})$ for $|b'|$, for $n \geq 2$,
where $b'$ is as in the last result. Note that $(u_n)$ is a decreasing sequence of open projections, and $u_n \geq q$
and $u_n$ commutes with $a$, as in the proof of  Lemma \ref{peaktr}.
By Lemma \ref{peaktr} again, we can choose an  increasing sequence
of positive  integers $(m_n)$  with
$\Vert  b f^{-1}  a^{m_n} (1 - u_n) \Vert \leq \frac{1}{2}$ and
$\Vert b f^{-1}  a^{m_n} \Vert \leq  1 +  \frac{1}{2^{n+1}}$.
 Let $g_k = b f^{-1} a^{m_k} = b a^{m_k} f^{-1}$, and let
$g  = \sum_{k=1}^\infty \frac{g_k}{2^k},$ and $x = g f
= \sum_{k=1}^\infty \frac{g_k f}{2^k} \in A$.
 We have $g q =  \sum_{k=1}^\infty \frac{g_k q}{2^k} = b f^{-1}  q
= b q f^{-1},$  and so $$x q = gf q = g q f = bq ,$$
as desired.     We
also have $$\Vert  g (1 - u_2) \Vert
\leq  \frac{5}{8} + \sum_{k=2}^\infty \frac{\Vert   g_k (1 - u_k) \Vert}{2^k}
\leq   \frac{5}{8} + \frac{1}{2} \sum_{k=2}^\infty \frac{1}{2^k} =  \frac{7}{8} \leq 1.$$
  If $r$ is a projection dominated by $u_n$ for every $n$, then $$\Vert g_n r \Vert \leq \Vert g_n u_m \Vert \leq \Vert
|b'|  u_m \Vert \leq 1 + \frac{1}{2^{m+1}}, \qquad m \in \Ndb ,$$ so that $\Vert g_n r \Vert \leq 1$.
Hence $\Vert g r \Vert \leq 1$.   If $s = u_{n-1} - u_n$ for some $n \geq 3$, then for $k \geq n$ we have $$\Vert g_k s \Vert
\leq \Vert g_k (1- u_n) \Vert
\leq \Vert g_k (1 - u_k) \Vert
\leq \frac{1}{2}.$$  If $k < n$ then $$\Vert g_k s \Vert
\leq \Vert |b'| \, u_{n-1} \Vert
\leq 1 +  \frac{1}{2^{n}}.$$
Thus $$\Vert g s \Vert \leq \sum_{k=1}^{n-1} \, \frac{ \Vert g_k s \Vert}{2^k} +  \sum_{k=n}^\infty \,  \frac{1}{2^{k+1}}
\leq (1 +  \frac{1}{2^{n}})(1- \frac{1}{2^{n-1}}) + \frac{1}{2^{n}}  = 1 - \frac{1}{2^{2n-1}} \leq 1.$$
Since $\Vert g \Vert = \Vert |b'| \sum_{k=1}^\infty \frac{a^{m_k}}{2^k} \Vert$, and
the $(u_k)$ commute with $|b'|$ and $a^{m_k}$,  we see  that
$\Vert g \Vert$ is dominated by the maximum of
$\Vert g (1-u_2) \Vert , \Vert g \, (\wedge_k u_k ) \Vert$,
and $\sup_{n \geq 3} \, \Vert g \, (u_{n-1} - u_n) \Vert$, each of which is $\leq 1$.
So $g^* g \leq 1$ and $x^* x = f g^* g f\leq f^2 = d$.    \end{proof}

{\bf Remarks.} 1) \   Corollary \ref{peakth4},
Theorem \ref{peakthang},  and the matching results in Section 2,
lead to  `Rudin-Carleson theorems' of the type in
\cite[Proposition 3.4]{Hay}.

\smallskip

2) \ Corollary \ref{peakth4}  is not true if one drops the condition
that $d$ commutes with $a$.  For example, take $A$ to be the lower triangular
matrices in $M_2$, $q = a = E_{11}$, and $d = \epsilon E_{22} + |x
\rangle \langle x |$, where $x = [1 \; \; -1]$.  We do not know
if Corollary \ref{peakth4}  is true if one replaces the condition
that $d$ commutes with $a$ with $d$ commuting with $q$.


\section{Compact projections}

In a recent paper
\cite{BNII} the first author and Neal developed (generalizing work of Akemann and coauthors, see e.g.\ \cite{AP}) the
theory of compact projections in an approximately unital
operator algebra $A$.  
We defined a projection $q \in A^{**}$ to be {\em compact relative to}
 $A$ if it is closed  and $q x = q$
for some $x \in {\rm Ball}(A)$. 

\begin{lemma}  \label{coco} Let $A$ be an approximately unital
operator algebra.    A closed projection $q \in A^{**}$ is {\em compact} if $q x = q$
for some $x \in A$.
\end{lemma}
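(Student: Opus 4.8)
The plan is to reduce the assertion to the trivial fact that in a \emph{unital} operator algebra every closed projection is compact, since it is then absorbed by the identity (which lies in the unit ball). So, given a closed projection $q \in A^{**}$ and an $x \in A$ with $qx = q$, I will show that $q$ is a closed projection for the unitization $A^1$. Granting this, $q = q \cdot 1_{A^1}$ with $1_{A^1} \in {\rm Ball}(A^1)$ exhibits $q$ as a compact projection for the unital algebra $A^1$; and by the fact recalled in the introduction (from \cite[Theorem 2.2]{BNII}), namely that the compact projections for $A$ are precisely the projections in $A^{**}$ that are compact with respect to $A^1$, it follows that $q$ is compact for $A$.

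To show $q$ is closed for $A^1$, set $p = 1_{A^1} - q \in (A^1)^{**}$; the goal is to produce a net in $A^1$ witnessing that $p$ is an open projection for $A^1$. Since $q$ is closed for $A$, its complement $q^\perp = 1_{A^{**}} - q$ in $A^{**}$ is an open projection for $A$, so there is a net $(a_t)$ in $A$ with $a_t = q^\perp a_t$ and $a_t \to q^\perp$ weak*; then $q a_t = q q^\perp a_t = 0$, so $p a_t = a_t - q a_t = a_t$. The one real idea is the observation that $1_{A^1} - x$ lies in the left ideal of $A^1$ annihilated by $q$: indeed $q(1_{A^1} - x) = q - qx = 0$, and since $p + q = 1_{A^1}$ this is equivalent to $p(1_{A^1} - x) = 1_{A^1} - x$. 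Now put $c_t = (1_{A^1} - x) + a_t x \in A^1$. Then $p c_t = p(1_{A^1} - x) + (p a_t) x = (1_{A^1} - x) + a_t x = c_t$; and since right multiplication by the fixed element $x$ is weak* continuous, $a_t x \to q^\perp x = x - qx = x - q$, so that $c_t \to (1_{A^1} - x) + (x - q) = 1_{A^1} - q = p$ weak*. Hence $p$ is an open projection for $A^1$, and $q = 1_{A^1} - p$ is a closed projection for $A^1$.

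I do not anticipate any serious obstacle; the point to watch is the bookkeeping with the two unitizations, i.e.\ that the complement $q^\perp$ of $q$ inside $A^{**}$ and the complement $p$ of $q$ inside $(A^1)^{**}$ differ by the ``point at infinity'' projection $1_{A^1} - 1_{A^{**}}$, so that the net $(c_t)$ must simultaneously approximate $q^\perp$ (via the term $a_t x$) and carry that point-at-infinity part (via the term $1_{A^1} - x$). In effect one is gluing the approximate identity $(a_t)$ of the open projection $q^\perp$ to the element $1_{A^1} - x$ to manufacture one for the larger open projection $p$.
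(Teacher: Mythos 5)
Your argument is correct, but it takes a genuinely different route from the paper's. The paper invokes Theorem 2.2 of \cite{BNII} to reduce to the case where $A$ is a $C^*$-algebra, and then verifies Akemann's positivity criterion for compactness: from $qx=q$ one gets $x^*qx=q$, and compressing by $x^*(\cdot)x$ a suitable decreasing net in $(A^1)_+$ produces a positive $b\in A$ with $q\leq b$. You instead stay inside the operator algebra and work with the net characterization of open projections: you glue the element $1-x$ (which is annihilated on the left by $q$ precisely because $qx=q$) onto a net $(a_t)$ witnessing the openness of $1_{A^{**}}-q$ with respect to $A$, obtaining $c_t=(1-x)+a_tx\in A^1$ with $pc_t=c_t\to p$ weak* for $p=1_{A^1}-q$; thus $q$ is closed, hence trivially compact, with respect to $A^1$, and Theorem 2.2 of \cite{BNII} (as recalled in the introduction) brings this back down to $A$. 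Both arguments lean on that theorem, but in opposite directions: the paper uses it to pass to a containing $C^*$-algebra, you use it to pass to the unitization. Your version avoids adjoints and positivity altogether and isolates the real content of the hypothesis $qx=q$, namely that $1-x$ supplies exactly the ``point at infinity'' piece $1_{A^1}-1_{A^{**}}$ which is what prevents a merely closed projection from being closed for $A^1$; the paper's version is shorter once the $C^*$-algebraic characterization of compactness is in hand. I see no gap: beyond the algebra, you use only the separate weak* continuity of multiplication in $(A^1)^{**}$ and the net description of open projections from \cite[Theorem 2.4]{BHN}, both quoted in the paper.
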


\begin{proof}  One direction is obvious.
For the other, by \cite[Theorem 2.2]{BNII} we may assume that $A$ is a $C^*$-algebra.
We have $x^* q x = q$.  There is a net $(a_t)$ in $A^1$ which decreases to $q$
(since there is a net increasing to $1-q$),
and if $b = x^* a_t x$ then  $q \leq b \in A_+$, which is one of the
standard definitions of a compact projection (see e.g.\ 
the lines above \cite[Lemma 2.7]{AP}).
\end{proof}

We thank C. Akemann for communicating to us the idea for Lemma \ref{coco} in the $C^*$-algebra case
(it does not seem to appear in the literature).   

Turning to the case of a (not necessarily approximately unital) operator algebra $A$,
 there are   
many possible notions of compactness which come to mind.   Fortunately 
these collapse to two
notions, one stronger than the other.  
We will simply say that a  projection $q \in A^{**}$ is {\em compact} (relative to $A$, or
with respect to $A$)
if $q$ is closed in $(A^1)^{**}$ with respect to $A^1$.  This 
is the same as  
$q \in A^{\perp \perp}$ being compact with respect to $B^{**}$ for a containing $C^*$-algebra $B$,
by \cite[Theorem 2.2]{BNII}  and the $C^*$-algebra case of that result.

\begin{theorem}  \label{compgiv}  Suppose that $A$ is an  operator algebra
(not necessarily approximately unital), and that $q \in A^{**}$ is a projection.  
 The following
  are equivalent:
  \begin{enumerate} \item  $q$ is compact with respect to $A$ in the sense just defined.
 \item  $q$ is closed with respect to $A^1$ and there exists 
$a \in {\rm Ball}(A)$ with $aq = qa = q$.
\item  $q$ is a decreasing (weak*) limit of projections of
form $u(a)$ for $a \in {\rm Ball}(A)$.
\end{enumerate}  \end{theorem}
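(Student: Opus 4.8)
The plan is to establish the cycle $(3)\Rightarrow(2)\Rightarrow(1)\Rightarrow(3)$, with essentially all of the work in the last implication. The implication $(2)\Rightarrow(1)$ is a tautology, since ``compact with respect to $A$'' was defined to mean precisely ``closed in $(A^1)^{**}$ with respect to $A^1$'', which is the first clause of (2). For $(3)\Rightarrow(2)$: if $q=\wedge_i u(a_i)$ for a decreasing net with each $a_i\in{\rm Ball}(A)$, then each $u(a_i)=\lim_n a_i^n$ is a peak projection for $A^1$ by \cite[Corollary 6.9]{BHN}, hence closed with respect to $A^1$; and a decreasing meet of projections closed with respect to $A^1$ is again closed with respect to $A^1$, since its complement is a join of projections open with respect to $A^1$, which is open by the argument given in the introduction. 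So the first clause of (2) holds. For the element, fix any index $i_0$: since $q\leq u(a_{i_0})$ we have $q=u(a_{i_0})q$, and since $a_{i_0}u(a_{i_0})=u(a_{i_0})a_{i_0}=u(a_{i_0})$, we get $a_{i_0}q=a_{i_0}u(a_{i_0})q=u(a_{i_0})q=q$ and likewise $qa_{i_0}=q$; so $a=a_{i_0}$ works.

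For $(1)\Rightarrow(3)$, the cases $q=0$ and $A$ unital are trivial (in the latter case ${\rm Ball}(A^1)={\rm Ball}(A)$ and one applies \cite[Theorem 3.4]{BNII} directly), so assume $q\neq 0$ and $A$ non-unital. As $A^1$ is unital, $q$ closed with respect to $A^1$ is the same as $q$ compact with respect to $A^1$, so by \cite[Theorem 3.4]{BNII} applied to $A^1$ there is a decreasing net $(q_i)$ of peak projections for $A^1$ with $q_i\searrow q$, and we may take $q_i=u(a_i)$ with $a_i\in\frac{1}{2}{\mathfrak F}_{A^1}$; note $a_iq=qa_i=q$ because $q\leq q_i$. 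The first step is to obtain an element of $A$ (not merely of $A^1$) fixing $q$. Write $a_i=\mu_i 1+c_i$ with $c_i\in A$; applying the contractive unital character $\chi:A^1\to\Cdb$, whose kernel is $A$, to $\|1-2a_i\|\leq 1$ gives $|1-2\mu_i|\leq 1$. I claim some $\mu_i\neq 1$. Indeed, if every $\mu_i=1$ then $a_i=1-(-c_i)$ with $-c_i\in\frac{1}{2}{\mathfrak F}_{A^1}\cap A=\frac{1}{2}{\mathfrak F}_A$, so $q_i=u(a_i)=s(-c_i)^\perp$ by \cite[Proposition 2.22]{BRead}, whence $q=\wedge_i q_i=1-\vee_i s(-c_i)$; but each $s(-c_i)$ is an open projection for $A$ lying in $A^{\perp\perp}$ (Corollary \ref{supp}), so $\vee_i s(-c_i)\in A^{\perp\perp}$, forcing $1=q+\vee_i s(-c_i)\in A^{\perp\perp}$, which contradicts $\tilde{\chi}(1)=1$ (recall $A^{\perp\perp}={\rm Ker}(\tilde{\chi})$). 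Fixing $i$ with $\mu_i\neq 1$ and putting $b=(1-\mu_i)^{-1}c_i\in A$ we get $bq=qb=q$ with $\|bq\|=\|q\|\leq 1$, so Theorem \ref{peakthang} produces $g\in{\rm Ball}(A)$ with $gq=qg=bq=q$. (This already yields $(1)\Rightarrow(2)$.)

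The second step is to intercalate a peak projection of $A$ between $q$ and each $q_i$. Fix $i$ and set $v_i=a_i\,g\,a_i$; as $A$ is an ideal of $A^1$ and $a_i,g\in{\rm Ball}(A^1)$, we have $v_i\in{\rm Ball}(A)$ and $v_iq=qv_i=q$. By Lemma \ref{haye} applied to $a_i$ and the peak $q_i=u(a_i)$, $\|a_ip'\|<1$ for every closed projection $p'\leq 1-q_i$, so $\|v_ip'\|\leq\|a_i\|\,\|g\|\,\|a_ip'\|<1$ for every such $p'$. Since $\frac{1+v_i}{2}\in\frac{1}{2}{\mathfrak F}_{A^1}$, the peak projection $u(\frac{1+v_i}{2})$ exists; it lies in $A^{\perp\perp}$ (the powers $(\frac{1+v_i}{2})^n$ have scalar part $2^{-n}\to 0$, so $\tilde{\chi}$ annihilates the limit), and by the adjustment in the proof of \cite[Theorem 3.4(2)]{BNII} it equals $u(c_i)$ for a suitable $c_i\in{\rm Ball}(A)$; call it $r_i$. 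It contains $q$ since $\frac{1+v_i}{2}q=q$, and the estimate $\|\frac{1+v_i}{2}p'\|\leq\frac{1+\|v_ip'\|}{2}<1$ for all closed $p'\leq 1-q_i$, together with the lattice argument of \cite[Theorem 3.4]{BNII}, gives $r_i\leq q_i$. Hence $q\leq\wedge_i r_i\leq\wedge_i q_i=q$, and reorganizing $(r_i)$ into a decreasing net as in \cite{BNII} exhibits $q$ as in (3).

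The main obstacle is the first step of $(1)\Rightarrow(3)$: passing from an element of $A^1$ that fixes $q$ to one of $A$. The device is the dichotomy on the scalar parts $\mu_i$ of the peaking elements $a_i\in\frac{1}{2}{\mathfrak F}_{A^1}$ — either one of them differs from $1$, in which case normalizing and calling on Theorem \ref{peakthang} finishes the argument, or they are all equal to $1$, which is impossible because it would place the identity $1$ in $A^{\perp\perp}$. A subtler point is checking $r_i\leq q_i$ in the second step: that $\|v_ip'\|<1$ for a single closed $p'$ does not by itself make $p'$ orthogonal to $u(v_i)$, so one must use the hypothesis for \emph{every} closed $p'\leq 1-q_i$ in conjunction with the structure of open and closed projections, exactly as in the proof of \cite[Theorem 3.4]{BNII}; there is also the routine bookkeeping, again as in \cite{BNII}, of confirming that $v_i$ really determines a peak projection of $A$ and that the family $(r_i)$ can be arranged into a decreasing net.
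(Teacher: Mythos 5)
Your cycle $(3)\Rightarrow(2)\Rightarrow(1)\Rightarrow(3)$ is organized differently from the paper's $(1)\Rightarrow(2)\Rightarrow(3)\Rightarrow(1)$, but the genuinely new content --- producing an element of $A$ rather than of $A^1$ fixing $q$ --- is handled essentially as in the paper: take the peaking elements $a_i\in \frac12{\mathfrak F}_{A^1}$ from \cite[Theorem 3.4(1)]{BNII}, rule out the case that every scalar part equals $1$ (you do this via support projections and the impossibility of $1\in A^{\perp\perp}$; the paper does it by evaluating the character $\chi^{**}$ on $u(x)$ --- both work), normalize the $A$-part, and invoke Theorem \ref{peakthang}. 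Your $(2)\Rightarrow(1)$ and $(3)\Rightarrow(2)$ are fine. The difference is that the paper then disposes of the remaining implication by citing the \emph{proof} of \cite[Theorem 3.4(1)]{BNII}, whereas you attempt an explicit construction, and that is where there is a gap.

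The problematic step is the claim $r_i=u\bigl(\tfrac{1+v_i}{2}\bigr)\leq q_i$. The justification you offer --- that $\Vert \tfrac{1+v_i}{2}\,p'\Vert<1$ for \emph{every} closed projection $p'\leq 1-q_i$ --- is not sufficient, even together with $v_iq=qv_i=q$ for a nonzero closed $q\leq q_i$. For instance in $M_3$ take $q=E_{11}$, $q_i=E_{11}+E_{22}$, $\eta=(e_2+e_3)/\sqrt 2$ and $v=E_{11}+P_\eta$: then $vq=qv=q$, the only nonzero closed projection under $1-q_i$ is $E_{33}$ and $\Vert\tfrac{1+v}{2}E_{33}\Vert=\sqrt{10}/4<1$, yet $u(\tfrac{1+v}{2})=E_{11}+P_\eta\not\leq q_i$. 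Lemma \ref{haye} converts such norm conditions into an identification of a peak projection only under the standing hypothesis $xq=q$ for the \emph{target} projection, which you do not have for $q_i$ (you only know $gq=q$, not $gq_i=q_i$); and the lattice argument in \cite{BNII} places $u(a)$ below an \emph{open} projection $p$ with $a=pap$, not below a closed one. Your conclusion is nonetheless true for your particular $v_i=a_iga_i$, but for a different reason: since $u(a_i)=q_i$ is a projection, the spectral projection of $a_i^*a_i$ for $\{1\}$ equals $q_i$; if $\xi$ is a unit vector with ${\rm Re}\langle v_i\xi,\xi\rangle=1$ then $\Vert a_i\xi\Vert=\Vert a_i^*\xi\Vert=1$, so $\xi\in {\rm Ran}(q_i)$; hence the spectral projection of ${\rm Re}(v_i)$ for $\{1\}$ lies under $q_i$, and $u(\tfrac{1+v_i}{2})$ lies under that spectral projection. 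With that repair --- or by simply citing the proof of \cite[Theorem 3.4(1)]{BNII} once $g$ is in hand, as the paper does --- your argument closes.
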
 

\begin{proof}  (1) $\Rightarrow$ (2) \  We have by 
e.g.\ \cite[Theorem 3.4 (1)]{BNII} that $q$ is a decreasing limit of projections of
form $u(x)$ for $x \in {\rm Ball}(A^1)$.  Suppose that each $x$ is of the form  $1 + a$ for some
$a \in A$.  Then $x^n$ is of this form for each $n$, and if $\chi : A^1 \to A^1/A$ is the 
canonical quotient then $1 = \chi^{**}(x^n) \to \chi^{**}(u(x))$, so that 
$\chi^{**}(u(x)) = 1$.  Hence $\chi^{**}(q) = 1$, a contradiction.
Thus at least one such  $x = a + c1$ for some $a \in A, c \in \Cdb \setminus \{1 \}$.  Then 
$$q = q u(x) =  q u(x) x = q x = q a + cq, $$ so that $q z = z = zq$ where 
$z = \frac{1}{1-c} a$.   By Theorem \ref{peakthang} with $d = 1$, there exists 
$a \in {\rm Ball}(A)$ with $aq = qa = q$.

(2) $\Rightarrow$ (3) \  Follows from the proof of  \cite[Theorem 3.4 (1)]{BNII}. 

(3)  $\Rightarrow$ (1) \ Clear for example from 
 \cite[Theorem 3.4 (1)]{BNII} since $u(a)$ is a peak projection
 with respect to $A^1$.   
\end{proof}
 
{\bf Remark.}  The condition in (3) is equivalent to $q$ being an infimum of peak projections, as in \cite[Theorem 3.4 (1)]{BNII},
and with the same proof (note that $u(a) \wedge u(b) = u(\frac{a+b}{2})$ as in that paper, for $a, b \in {\rm Ball}(A)$ such that
$u(a)$ and $u(b)$ are projections).

\begin{corollary}  \label{noct} Let $A$ be a (not necessarily approximately unital)
 operator algebra.  If $q \in A^{**}$ is compact then $q$ is a weak* limit of a 
net $(a_t)$ in ${\rm Ball}(A)$  with $a_t q = q a_t = q$ for all $t$.
\end{corollary}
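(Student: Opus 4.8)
The plan is to combine Theorem~\ref{compgiv} with the observation that the class of ``good'' elements witnessing compactness can be made to form an approximate net. First I would invoke Theorem~\ref{compgiv}, specifically the implication $(1)\Rightarrow(3)$, to write $q = \lim_i \, q_i$ as a decreasing weak* limit of peak projections $q_i = u(a_i)$ with $a_i \in \Ball(A)$. By the Remark following that theorem, and since $u(a)\wedge u(b) = u(\tfrac{a+b}{2})$, I may in fact arrange that the $q_i$ form a decreasing net indexed by a directed set, so that for each $i$ we have $q \le q_i$.

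Next, the key point: for each fixed $i$, the projection $q$ is compact (being $\le$ the closed projection $q_i$, or directly from hypothesis) and $q \le q_i = u(a_i)$, so in particular $q$ is dominated by a peak projection of an element of $\Ball(A)$. I would then apply Theorem~\ref{compgiv} $(1)\Rightarrow(2)$ — or better, apply Corollary~\ref{peakth4} / Theorem~\ref{peakthang} with $d = 1$ — to produce, for each $i$, an element $a_t \in \Ball(A)$ with $a_t q = q a_t = q$. But this alone gives only a single such element, not a net converging weak* to $q$. To get weak* convergence to $q$, I would instead work with the $q_i$ themselves: for each $i$, since $q_i$ is a peak projection of $a_i \in \Ball(A)$, the powers $a_i^n \to q_i$ weak* as $n \to \infty$, and $a_i^n q = q a_i^n = q$ for all $n$ (because $a_i q = q a_i = q$, which follows as $q \le u(a_i)$ forces $q u(a_i) = q$, hence $q a_i = q$ and similarly $a_i q = q$). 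Thus for each $i$ we can pick $n_i$ with $a_i^{n_i}$ close to $q_i$ in the weak* topology; since the net $(q_i)$ decreases to $q$, a standard iterated-limit/subnet argument (or a direct indexing of the net by pairs $(i, \text{weak* neighborhood})$) yields a net $(b_j)$ drawn from $\{ a_i^{n} : i, n \}$ with $b_j \to q$ weak* and $b_j q = q b_j = q$ for all $j$. Each $b_j$ lies in $\Ball(A)$ since $\Vert a_i \Vert \le 1$.

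I expect the main obstacle to be the bookkeeping in the last step — converting the decreasing net $(q_i)$ together with the pointwise weak* convergence $a_i^n \to q_i$ into a single net converging to $q$. The clean way is: the set $\mathcal N$ of weak* open neighborhoods of $q$ is a directed set under reverse inclusion; given $V \in \mathcal N$, since $q_i \downarrow q$ weak* there is $i(V)$ with $q_{i(V)} \in V'$ for a slightly smaller neighborhood $V'$, and since $a_{i(V)}^n \to q_{i(V)}$ weak* there is $n(V)$ with $a_{i(V)}^{n(V)} \in V$; set $b_V = a_{i(V)}^{n(V)}$. Then $(b_V)_{V \in \mathcal N}$ is the desired net, each $b_V \in \Ball(A)$ with $b_V q = q b_V = q$, and $b_V \to q$ weak*. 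One small technical verification worth spelling out is that $q \le u(a_i)$ implies $q a_i = a_i q = q$: since $u(a_i) = \lim_n a_i^n$ is a projection with $q \le u(a_i)$, we get $q = q\, u(a_i) = q \lim_n a_i^n$; using $a_i u(a_i) = u(a_i)$ and $q u(a_i) = q$ one checks $q a_i = q$ (and symmetrically on the other side since $\Vert a_i\Vert \le 1$ makes the condition two-sided, as noted in the proof of Lemma~\ref{haye}).
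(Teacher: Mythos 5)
Your proposal is correct and is essentially the paper's own argument: obtain $u(a_i)\searrow q$ from Theorem~\ref{compgiv}(3), observe that $qa_i^n=qu(a_i)a_i^n=qu(a_i)=q$ (and symmetrically), and reindex the iterated weak* limit $\lim_i\lim_n a_i^n=q$ into a single net of contractions. The detour in your second paragraph through Theorem~\ref{compgiv}(1)$\Rightarrow$(2) and Theorem~\ref{peakthang} is unnecessary, as you yourself note before discarding it.
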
  \begin{proof}  
Choose $a_t \in {\rm Ball}(A)$
with $u(a_t) \searrow q$ (see Theorem \ref{compgiv} (3)).  Then $q a^n_t = q u(a_t) a^n_t = q u(a_t) = q$.
Since the double weak* limit $\lim_t \lim_n a^n_t = \lim_t u(a_t) =  q$,
 a reindexing of $(a^n_t)$ is a net of contractions $y_t \to q$ weak*
 with $qy_t = y_t q = q$.
\end{proof}

{\bf Remark.}   The fact in Corollary \ref{noct} was stated in 
\cite{BNII} for approximately unital algebras 
before Theorem 2.1 there.  Unfortunately there seems to be a typo there:
the  construction does not produce elements in ${\rm Ball}(A)$ in general.  This is easily 
fixed though by choosing the $e_t$ there in $\frac{1}{2} {\mathfrak F}_A$ by Read's theorem
 \cite{Read,Bnew}.   
 
\begin{proposition} \label{cpeakthang}  Suppose that
 $A$ is a (not necessarily approximately unital)
operator algebra.
  \begin{enumerate} \item  If $q$ is a projection in
$A^{**}$ and $q = u(x)$, 
for some $x \in  {\rm Ball}(A^1)$, then  $q = u(a)$, 
for some $a \in  {\rm Ball}(A)$.
\item  If $A$ is separable then the 
compact projections in $A^{**}$ are precisely the projections in $A^{**}$ of the form $u(a)$,
for some $a \in  {\rm Ball}(A)$.
\end{enumerate}  \end{proposition}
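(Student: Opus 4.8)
The plan is to prove (1) directly and then obtain (2) as a corollary of (1) together with the fact recalled in the introduction that, for a separable unital operator algebra, every closed projection in the bidual is a peak projection.

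For (1) the one new idea is that if $\tilde x \in {\rm Ball}(A^1)$ peaks at $q$ and $h \in {\rm Ball}(A)$ fixes $q$ (meaning $hq = qh = q$), then the product $a := h\tilde x$ lies in $A$, has norm at most $1$, and still peaks at $q$. First I would observe that $q = u(\tilde x)$ is a peak projection for $A^1$, hence closed with respect to $A^1$; since $q \in A^{**}$ by hypothesis, $q$ is therefore compact with respect to $A$ in the sense of Section 6, so Theorem \ref{compgiv} (the implication (1)$\Rightarrow$(2)) supplies an $h \in {\rm Ball}(A)$ with $hq = qh = q$. Set $a = h\tilde x$. Then $a \in A^1 A \subset A$ and $\Vert a \Vert \leq \Vert h \Vert \, \Vert \tilde x \Vert \leq 1$; moreover $aq = h(\tilde x q) = hq = q$ and $qa = (qh)\tilde x = q\tilde x = q$, using $\tilde x q = q\tilde x = q$, which holds since $q = u(\tilde x) = \lim_n \tilde x^n$ weak*. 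Now fix a $C^*$-algebra $B$ containing $A^1$. For any closed projection $p \in B^{**}$ with $p \leq 1-q$, Lemma \ref{haye} applied to $\tilde x$ gives $\Vert \tilde x p \Vert < 1$, whence
$$ \Vert ap \Vert = \Vert h\tilde x p \Vert \leq \Vert h \Vert \, \Vert \tilde x p \Vert \leq \Vert \tilde x p \Vert < 1 . $$
Applying Lemma \ref{haye} once more, now to $a$ (a contraction with $aq = q$), the condition just verified is precisely condition (3) of that lemma, so condition (1) holds: $q = u(a)$ with $a \in {\rm Ball}(A)$. This proves (1).

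For (2), the inclusion $\{ u(a) : a \in {\rm Ball}(A) \} \subseteq \{\text{compact projections}\}$ needs no separability: if $q = u(a)$ is a projection for some $a \in {\rm Ball}(A)$, then $q = \lim_n a^n$ lies in the weak* closure $A^{\perp \perp} = A^{**}$ of $A$, and $q$ is a peak projection for $A^1$ (as $a \in {\rm Ball}(A^1)$), hence closed with respect to $A^1$; so $q$ is compact with respect to $A$. Conversely, if $q \in A^{**}$ is compact, then by definition $q$ is closed with respect to $A^1$; since $A^1 = A + \Cdb 1$ is separable and unital, the result cited in the introduction shows $q$ is a peak projection for $A^1$, say $q = u(\tilde x)$ with $\tilde x \in {\rm Ball}(A^1)$, and then part (1) produces $a \in {\rm Ball}(A)$ with $q = u(a)$.

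The step I expect to be the crux — and the only place any depth enters, beyond quoting Theorem \ref{compgiv} — is placing the factor $h$ on the correct side: writing $a = h\tilde x$ keeps $\Vert ap \Vert \leq \Vert \tilde x p \Vert$ for closed $p \leq 1-q$ while preserving $aq = qa = q$, whereas $a = \tilde x h$ would spoil the norm estimate against closed projections below $1-q$. Everything else is routine bookkeeping with Lemma \ref{haye} and the definitions of compact and peak projections.
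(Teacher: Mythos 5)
Your proof is correct and follows essentially the same route as the paper: both obtain $h\in{\rm Ball}(A)$ with $hq=qh=q$ from compactness of $q$ (via Theorem \ref{compgiv}), form the product $h\tilde x\in A$, and verify it still peaks at $q$, with part (2) then following from part (1) and the separable peaking fact for $A^1$. The only difference is cosmetic: where you check $q=u(h\tilde x)$ directly through condition (3) of Lemma \ref{haye}, the paper simply cites the last lines of the proof of Theorem 3.4\,(2) in \cite{BNII} for the same computation (and note that the symmetry of Lemma \ref{haye} means $\tilde x h$ would have worked just as well, via condition (2)).
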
  

\begin{proof}  (1) \  Since $u(x)$ is closed in $A^1$, $q$ is compact for $A$, and so by the previous result
$q = qb$ for some $b \in {\rm Ball}(A)$.  By the last lines of proof of \cite[Theorem 3.4 (2)]{BNII}
we also have $q = u(bx)$.  

(2) \ This follows from (1) and the fact from  \cite[Theorem 3.4 (2)]{BNII}
that since $A^1$ is separable any projection compact with respect to $A^1$
equals $u(x)$, 
for some $x \in  {\rm Ball}(A^1)$.   
\end{proof}

We  define 
a {\em ${\mathfrak F}$-peak projection} for $A$ to be $u(x)$, the weak* limit
of the powers $x^n$, for 
some $x \in \frac{1}{2} {\mathfrak F}_A$.   See 
 \cite[Corollary 3.3]{BNII} for the fact that this
weak* limit exists and is a projection, which is 
nonzero if $\Vert x \Vert = 1$.  We define a
projection in $A^{**}$ to be  ${\mathfrak F}$-compact if it is a decreasing 
limit of  ${\mathfrak F}$-peak projections.  If $A$ is approximately unital then
the compact projections relative to $A$ as defined above,
the ${\mathfrak F}$-compact projections, and 
the compact projections  in \cite{BNII},
are the same, by Theorems 2.2 and  3.4 in that reference.  However 
if $A$ is not approximately unital then there may exist 
compact projections in the sense above which are not ${\mathfrak F}$-compact projections.
(This is the case in the 
example at the end of Section 4, where
${\mathfrak F}_A = (0)$, yet the copy of the characteristic
function of $\{ 1 \}$ in $C(\Tdb)^{**}$ equals $u(f)$ for $f = z (z+1)/2 \in A$.)

\begin{proposition} \label{coh}  If 
 $A$ is any  
operator algebra, then 
\begin{itemize}  
\item [(1)]  A projection in $A^{**}$  is
 ${\mathfrak F}$-compact in the sense above iff it is a compact projection
in the sense of {\rm \cite{BNII}} for $A_H$.   
\item [(2)]  A projection in $A^{**}$  is a 
 ${\mathfrak F}$-peak projection in the sense above iff it is a peak projection
in the sense of {\rm \cite{BNII}} for $A_H$, and iff it equals $u(a)$ for some
$a \in {\rm Ball}(A_H)$.  
\item [(3)] If $A$ is separable then every 
 ${\mathfrak F}$-compact projection in $A^{**}$ is a  ${\mathfrak F}$-peak projection.  
 \end{itemize} \end{proposition}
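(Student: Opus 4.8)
The plan is to reduce all three assertions to the case of the approximately unital operator algebra $A_H = \overline{\spn}({\mathfrak F}_A)$, where they are known results from \cite{BNII} together with the facts recalled in Section 1; the point is that ${\mathfrak F}$-peakness and ${\mathfrak F}$-compactness depend only on $A_H$.

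First I would record the reduction. By Theorem \ref{sps}, $A_H$ is a HSA in $A$, say with support projection $p$, and ${\mathfrak F}_A = {\mathfrak F}_{A_H}$, so that $\frac{1}{2} {\mathfrak F}_A = \frac{1}{2} {\mathfrak F}_{A_H}$. Being a HSA, $A_H$ satisfies $A_H^{\perp \perp} = p A^{**} p$, which is a weak* closed subalgebra of $A^{**}$ with identity $p$, and the canonical embedding $A_H^{**} = A_H^{\perp \perp} \hookrightarrow A^{**}$ is an injective weak* continuous homomorphism. Hence, for $x \in \frac{1}{2} {\mathfrak F}_A = \frac{1}{2} {\mathfrak F}_{A_H}$, all powers $x^n$ lie in $A_H$, and the weak* limit $u(x) = \lim_n x^n$ (which exists by \cite[Corollary 3.3]{BNII}) is computed identically in $A^{**}$ and in $A_H^{**}$, lies in $p A^{**} p$, and is a projection in $A^{**}$ if and only if it is a projection in $A_H^{**}$. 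Thus the ${\mathfrak F}$-peak projections of $A$ are exactly the ${\mathfrak F}$-peak projections of $A_H$; and since a decreasing weak* limit of projections in the weak* closed set $p A^{**} p$ again lies there and is computed identically in both biduals, the ${\mathfrak F}$-compact projections of $A$ are exactly the ${\mathfrak F}$-compact projections of $A_H$.

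It then remains to invoke \cite{BNII} for the approximately unital algebra $A_H$. As recalled in Section 1, for an approximately unital operator algebra the ${\mathfrak F}$-compact projections are precisely the compact projections in the sense of \cite{BNII} (by \cite[Theorems 2.2 and 3.4]{BNII}); together with the reduction this proves (1). Likewise, using \cite[Theorem 3.4]{BNII} together with the lemma of Section 1 characterizing peak projections as the weak* limits $\lim_n a^n$ with $a \in {\rm Ball}(A)$, one gets that for $A_H$ the ${\mathfrak F}$-peak projections coincide with the peak projections in the sense of \cite{BNII}, and that these are exactly the projections of the form $u(a)$ with $a \in {\rm Ball}(A_H)$; together with the reduction this proves (2). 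Finally, if $A$ is separable then so is $A_H$, and then by the statement recalled near the end of Section 1 (from \cite[p.\ 200]{BRead} and \cite[Theorem 3.4]{BNII}) every compact projection of $A_H$ is a peak projection of $A_H$; feeding this through (1) and (2) gives (3). The only step needing genuine care is the reduction, and there the whole content is the elementary observation that $A_H^{\perp \perp} = p A^{**} p$ is a weak* closed subalgebra of $A^{**}$ already containing every power $x^n$ (for $x \in \frac{1}{2} {\mathfrak F}_A$) as well as every relevant projection, so that the defining weak* limits cannot distinguish $A$ from $A_H$; beyond this bookkeeping I anticipate no obstacle.
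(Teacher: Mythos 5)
Your proposal is correct and follows essentially the same route as the paper: reduce to the approximately unital algebra $A_H$ via the identity ${\mathfrak F}_A = {\mathfrak F}_{A_H}$ from Theorem \ref{sps}, then quote the characterizations of compact and peak projections from \cite{BNII} (and separability of $A_H$ for part (3)). Your explicit verification that the weak* limits defining $u(x)$ are computed identically in $A^{**}$ and in $A_H^{**} \cong p A^{**} p$ is a point the paper leaves implicit, but it is the same argument.
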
 \begin{proof}  Most of these
use the fact from Theorem
\ref{sps} that
${\mathfrak F}_A = {\mathfrak F}_{A_H}$, together with facts stated in the introduction,
or above the Proposition, about peak projections.  

(2) \ By Theorem 3.4 in \cite{BNII},
 $q$ is a compact projection in the sense of \cite{BNII} for $A_H$,  iff it is a decreasing
limit of terms of the form $u(x)$ for $x \in \frac{1}{2} {\mathfrak F}_{A_H} = 
\frac{1}{2} {\mathfrak F}_A$.  That is, iff  it is  ${\mathfrak F}$-compact.

 (1) \ By definition $q$ is ${\mathfrak F}$-peak iff $q = u(x)$ for $x \in \frac{1}{2} {\mathfrak F}_A
= \frac{1}{2} {\mathfrak F}_{A_H}$,
which by what we said in the 
introduction is equivalent to the other conditions.

(3) \ This is obvious  from (1), (2), and \cite[Theorem 3.4 (2)]{BNII},
 since in this case $A_H$ is separable too.   
\end{proof}

It now is  a simple matter to generalize to algebras with no approximate identity 
other results from \cite{BNII} concerning compact 
projections, using the two generalizations of compactness above.
For the second, ${\mathfrak F}$-compactness, this is usually easier than for the first. 
We mention for example that results 2.3, 2.4, and 5.1 from that paper are clearly true for 
what we have called compact projections above,
for algebras with no approximate identity (with all occurrences of 
the words `approximately unital' removed).  The proofs are unchanged.  Also both 
noncommutative Urysohn lemmas from \cite{BNII} do  generalize:
  
\begin{theorem} \label{ncuch} {\rm (Noncommutative Urysohn lemma for
general operator algebras) } \ Let $A$ be a
(not necessarily approximately unital) operator algebra, a subalgebra of a $C^*$-algebra $B$,
and let $q$ be a compact projection  in $A^{**}$.  Then
\begin{itemize}
\item [(1)]  For any open
projection $p \in B^{**}$ with $p \geq q$, and any $\epsilon > 0$,
there exists an $a \in {\rm Ball}(A)$ with
$a q = q$ and $\Vert a (1-p) \Vert < \epsilon$
and $\Vert (1-p)  a \Vert < \epsilon$.
\item [(2)]  For any open
projection $p \in A^{**}$ with $p \geq q$, 
there exists $a \in {\rm Ball}(A)$ with $q = q a, a= ap$.   
\end{itemize}
\end{theorem}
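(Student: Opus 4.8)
The plan is to reduce both parts to results already available in the excerpt, namely Theorem \ref{ncuch} for approximately unital algebras in \cite{BNII} (results 2.3, 2.4, 5.1 and the two Urysohn lemmas there), combined with the reduction machinery of Section 4 (especially that $\mathfrak{F}_A = \mathfrak{F}_{A_H}$ and that $A_H = \overline{\rm Span}(\mathfrak{F}_A)$ is the largest approximately unital subalgebra of $A$) and the interpolation results of Section 5.

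For part (2), I would first observe that $q$, being compact for $A$, is $\mathfrak{F}$-compact only when it lies under the support projection of $A_H$; but in general $q$ need not be $\mathfrak{F}$-compact, so the cleanest route is to use Theorem \ref{compgiv}: $q$ is a decreasing weak* limit of projections $u(a_s)$ with $a_s \in {\rm Ball}(A)$, and (by Corollary \ref{noct}) is a weak* limit of contractions $y_t \in A$ with $q y_t = y_t q = q$. Given the open projection $p \in A^{**}$ with $p \geq q$, I want $a \in {\rm Ball}(A)$ with $qa = q$ and $a = ap$, i.e.\ $a \in pA^{**} \cap A$, the r-ideal $J'$ supported by $p$. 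Here I would apply Theorem \ref{peakthang} (or rather Corollary \ref{peakth4}/Theorem \ref{babint} style reasoning): working in $A^1$, the projection $q$ is closed there, $1-p$ is closed, $q(1-p) = 0$, so picking any contraction $b \in A$ with $qb = bq = q$ (available by Corollary \ref{noct}) and noting $q$ commutes with the relevant data, one applies the peak interpolation theorem to the right ideal $\{x \in pA^{**} \cap A\}$ — more precisely, restrict attention to the HSA/r-ideal pair determined by $p$ and use that $q$ is compact inside it. Actually the slick argument: since $q \leq p$ with $p$ open, $q$ is compact relative to the HSA $D = pA^{**}p \cap A$, which is approximately unital; then apply the $C^*$-style / approximately-unital Urysohn lemma (result analogous to Theorem \ref{ncuch}(2) proved in \cite{BNII}) inside $D$ to get $a \in {\rm Ball}(D) \subset {\rm Ball}(A)$ with $qa = q$ and $a = ap$ (the latter automatic since $a \in D$ so $a = pap$).

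For part (1), the target $p$ is open in $B^{**}$ and need not lie in $A^{**}$, so the HSA reduction is unavailable; instead I would mimic the proof of the noncommutative Urysohn lemma in \cite{BNII} directly. The idea: $q$ is a decreasing limit of peak projections $u(a_k)$, each $a_k \in {\rm Ball}(A)$ (using Theorem \ref{compgiv}(3) — choose one $a$ with $u(a) \geq q$, $u(a)$ compact for $A$ and close to $q$); by Lemma \ref{haye}, since $1-p$ is dominated by a closed projection $\leq 1 - q$... but one must be careful because $1-p$ need not be $\leq 1-q$ unless $p \geq q$ in $B^{**}$, which is the hypothesis. So $1-p \leq 1-q$, and Lemma \ref{haye} gives $\Vert a(1-p)\Vert < 1$ and $\Vert (1-p)a \Vert < 1$ for a suitable peak $a$ with $u(a) \geq q$. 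Then raising to a power, $\Vert a^n(1-p)\Vert = \Vert (a(1-p))^n\Vert \to 0$ and similarly on the left, while $a^n q = q$ since $qa = q$ (as $q \leq u(a)$ forces $u(a)a = a u(a)$ hence $qa = q$ — here I'd use that $q = q\,u(a)$ and $u(a) = \lim a^m$, so $qa = q u(a) a = q u(a) = q$). Choosing $n$ large gives $a^n \in {\rm Ball}(A)$ with $a^n q = q$ and both $\Vert a^n(1-p)\Vert, \Vert (1-p)a^n \Vert < \epsilon$, exactly (1).

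The main obstacle I anticipate is part (2): producing $a$ with the \emph{exact} equalities $qa = q$ and $a = ap$ (as opposed to the $\epsilon$-approximate statement of (1)), since this is genuinely a best-approximation/proximinality phenomenon and Theorem \ref{nomi} shows such exact interpolation can fail in the absence of a commutativity hypothesis. The resolution is that here the relevant commutativity \emph{is} present — $p$ is open in $A^{**}$ so everything takes place inside the approximately unital HSA $D = pA^{**}p \cap A$, where $q$ is compact, and the approximately unital noncommutative Urysohn lemma of \cite{BNII} (result analogous to the assertion being proved) applies verbatim. So the real content of the proof is just correctly setting up this reduction to $D$ and checking $q$ is compact relative to $D$, which follows since $q \leq p = 1_{D^{**}}$ and $q$ is closed in $(A^1)^{**}$, hence closed in $D^{**} \cong pA^{**}p$, hence compact in $D^{**}$ (it is dominated by an element of ${\rm Ball}(D)$, e.g.\ restrict the $b$ from Corollary \ref{noct} and use $pbp \in D$ with $q(pbp) = q$, then invoke Lemma \ref{coco}).
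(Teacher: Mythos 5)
For part (2), your reduction to the HSA $D = pA^{**}p\cap A$ founders at the one point that carries all the weight: verifying that $q$ is compact relative to $D$, i.e.\ producing an element of ${\rm Ball}(D)$ that fixes $q$. Your candidate $pbp$ (with $b$ from Corollary \ref{noct}) lies in $pA^{**}p$, but there is no reason it lies in $A$: the projection $p$ is in $A^{**}$ and not in $A$, so compressions of elements of $A$ by $p$ generally leave $A$. Hence ``$pbp\in D$'' is unjustified and Lemma \ref{coco} cannot be invoked. Producing an element of $A$ supported in $pA^{**}p$ (or in $A^{**}p$) that fixes $q$ is precisely the content of the theorem, so as set up the reduction is essentially circular (the same objection applies to the unproved claim that $q$ closed in $(A^1)^{**}$ implies $q$ closed relative to $D$). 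The paper's route avoids this entirely: apply the approximately unital Urysohn lemma \cite[Theorem 2.6]{BNII} in the unital algebra $A^1$ (where $q$ is by definition compact and $p$ is open) to get $a\in{\rm Ball}(A^1)$ with $qa=q$ and $a=ap$; then $a = ap \in A^{\perp\perp}$ since $A^{\perp\perp}$ is an ideal in $(A^1)^{**}$, and $A^{\perp\perp}\cap A^1 = A$, so $a$ lies in $A$ after all.

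For part (1), two steps fail. First, Lemma \ref{haye} yields $\Vert a(1-p)\Vert<1$ only for closed projections $1-p\le 1-u(a)$, that is $p\ge u(a)$; you only know $p\ge q$, and the peak projections $u(a_k)\searrow q$ supplied by Theorem \ref{compgiv}(3) need not individually be dominated by $p$ --- arranging $q\le u(a)\le p$ is again essentially the Urysohn lemma you are trying to prove. Second, the identity $\Vert a^n(1-p)\Vert = \Vert (a(1-p))^n\Vert$ requires $(1-p)a(1-p)=a(1-p)$, i.e.\ commutation of $a$ with $p$ (this is exactly the hypothesis under which that identity is used in Lemma \ref{peaktr}); without it, $\Vert a(1-p)\Vert<1$ does not force $\Vert a^n(1-p)\Vert\to 0$. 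The paper's proof of (1) instead reruns the proof of \cite[Theorem 2.1]{BNII} with the net $(y_t)$ of Corollary \ref{noct} substituted for the net used there; that argument does not proceed through a single peaking element, and you would need to reproduce it (or something equivalent) to close these gaps.
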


 \begin{proof}
(2) \ Apply  \cite[Theorem 2.6]{BNII} in $A^1$:  if $a \in A^1, p \in 
A^{\perp \perp}$ and $a p = a$, then $a \in 
 A^{\perp \perp} \cap A^1 = A$ (since $A^{\perp \perp}$ is an ideal in $(A^1)^{**}$).

(1) \ The  proof of \cite[Theorem 2.1]{BNII} works, 
taking $(y_t)$ to be the net in  \ref{noct}.
 \end{proof}   

We say that a right ideal $J$ in $A$ is regular (resp.\ $1$-regular) if
there exists an $x \in A$ (resp.\ $x \in {\rm Ball}(A)$) with $(1-x) A \subset J$.

\begin{proposition} \label{1reg2}  If $q$ is a compact projection in $A^{**}$
then  the right ideal $\{ a \in A : qa = 0 \}$ is 1-regular.  
  \end{proposition}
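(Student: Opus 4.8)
The plan is to apply Corollary \ref{noct} and Theorem \ref{compgiv} to produce the single element $x \in {\rm Ball}(A)$ witnessing $1$-regularity of $J = \{ a \in A : qa = 0 \}$. Recall that by Theorem \ref{compgiv} $(2)$, since $q$ is compact there exists $a \in {\rm Ball}(A)$ with $aq = qa = q$. The natural candidate is $x = a$: I claim $(1-a)A \subset J$. Indeed, for any $c \in A$ we have $q(1-a)c = (q - qa)c = (q-q)c = 0$, so $(1-a)c \in J$. This is immediate once the element $a$ is in hand, so the entire content of the proof is the \emph{existence} of such an $a \in {\rm Ball}(A)$ with $qa = q$ — equivalently, the implication $(1)\Rightarrow(2)$ of Theorem \ref{compgiv}, which is already proved in the excerpt and rests ultimately on Theorem \ref{peakthang}.

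Concretely, the steps I would carry out are: first, invoke Theorem \ref{compgiv} to get $a \in {\rm Ball}(A)$ with $aq = qa = q$ (this is exactly condition (2) there, equivalent to compactness). Second, verify the one-line computation $q(1-a)c = 0$ for all $c \in A$, showing $(1-a)A \subset J$. Third, observe that $a \in {\rm Ball}(A)$, so by definition $J$ is $1$-regular. I would write this as essentially a one-paragraph proof:

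\begin{proof}  By Theorem \ref{compgiv} there exists $a \in {\rm Ball}(A)$ with $aq = qa = q$.  For any $c \in A$ we have $q(1-a)c = qc - (qa) c = qc - qc = 0$, so $(1-a)c \in J$; that is, $(1-a)A \subset J$.  Since $a \in {\rm Ball}(A)$, the right ideal $J$ is $1$-regular.
\end{proof}

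The only subtlety — and hence the "main obstacle," though it is one that has already been handled upstream — is that the existence of $a \in {\rm Ball}(A)$ with $qa = q$ (not merely $qa = qa \cdot(\text{something})$, and not merely in $(1+\epsilon){\rm Ball}(A)$) is genuinely the hard part; it is precisely the $\epsilon = 0$ interpolation supplied by Theorem \ref{peakthang} applied with $d = 1$ inside the proof of Theorem \ref{compgiv}$(1)\Rightarrow(2)$. Given that machinery, nothing further is needed. One should double-check that the definition of "$1$-regular" is being used in the correct variance (a \emph{right} ideal $J$ with $(1-x)A \subset J$), which matches the computation above since we multiply $q$ on the left and $c$ ranges on the right.
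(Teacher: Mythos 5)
Your proof is correct and is essentially identical to the paper's one-line argument: both obtain $a \in {\rm Ball}(A)$ with $qa = q$ from Theorem \ref{compgiv} and then observe that $(1-a)A \subset \{c \in A : qc = 0\}$. Your identification of the existence of such an $a$ (the $\epsilon = 0$ interpolation from Theorem \ref{peakthang}) as the real content is also accurate.
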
 \begin{proof}
If $q a = q$ with
$a \in {\rm Ball}(A)$ then $(1-a)A \subset (1-q) (A^1)^{**} \cap A$.
 \end{proof}

\begin{proposition} \label{1reg}  An r-ideal $J$ in an approximately unital
operator algebra $A$ is regular iff it is $1$-regular,
and iff the complement of the support projection of $J$ is a compact projection. \end{proposition}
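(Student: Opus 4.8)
The plan is to prove the chain of equivalences in Proposition \ref{1reg} by relating the regularity of an r-ideal $J$ to properties of the complement $q = p^\perp$ of its support projection $p$, where $A$ is approximately unital so that $q$ is a closed projection in $A^{**}$. The implication ``$1$-regular $\Rightarrow$ regular'' is trivial. For ``regular $\Rightarrow$ $q$ compact'': if $(1-x)A \subset J$ for some $x \in A$, then apply the canonical weak* continuous extension to deduce $(1-x)p^\perp = 0$ in $A^{**}$, i.e.\ $q = xq$; since $q$ is closed, Lemma \ref{coco} (using that $A$ is approximately unital) immediately gives that $q$ is compact. For ``$q$ compact $\Rightarrow$ $1$-regular'': this is precisely the content of Proposition \ref{1reg2}, once we observe that $J = \{a \in A : qa = 0\} = p^\perp(A^1)^{**} \cap A = pA^{**} \cap A$, which is the r-ideal with support projection $p$ (as recalled in the introduction); indeed a compact $q$ satisfies $qa = q$ for some $a \in {\rm Ball}(A)$ by Theorem \ref{compgiv}(2) (or Lemma \ref{coco} in reverse), and then $(1-a)A \subset J$.

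Concretely, the steps I would carry out in order are: (i) identify $J$ with $\{a \in A : qa = 0\}$ where $q$ is the complement of the support projection $p$ of $J$ — this uses the correspondence between r-ideals and open projections recalled in the introduction, noting $J = pA^{**} \cap A$ and $p a = a \iff q a = 0$; (ii) prove ``regular $\Rightarrow$ $q$ compact'' as above, taking weak* limits to pass from $(1-x)A \subset J$ to $(1-x)q = q$, then closing the argument with Lemma \ref{coco}; (iii) invoke Proposition \ref{1reg2} for ``$q$ compact $\Rightarrow$ $1$-regular''; (iv) note ``$1$-regular $\Rightarrow$ regular'' is immediate. This closes the cycle regular $\Rightarrow$ $q$ compact $\Rightarrow$ $1$-regular $\Rightarrow$ regular, and throws in the equivalence with compactness of $q$ for free.

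One subtlety worth spelling out in step (ii): from $(1-x)A \subset J$ one wants $(1-x)q = q$, equivalently $xq = q$. Since $A$ is approximately unital with cai $(e_t)$, we have $e_t \to 1_{A^{**}}$ weak*, so $(1-x)e_t \in (1-x)A \subset J$ for each $t$, hence $(1-x)e_t \cdot p = 0$ (as elements of $J$ are killed by $p$ on the right... more precisely $Jp = 0$ since $J \subset A^{**}q$). Wait — I should be careful about sidedness. Elements $a \in J$ satisfy $ap = a$, i.e.\ $aq = 0$. So $(1-x)e_t q = 0$ for all $t$; taking weak* limits in $t$ and using separate weak* continuity of multiplication, $(1-x)q = 0$, i.e.\ $xq = q$. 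Then $q$ closed plus $xq = q$ with $x \in A$ gives $q$ compact by Lemma \ref{coco}. This is the one place requiring a little care with Arens products and weak* limits, but it is routine.

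The main obstacle, such as it is, is really just bookkeeping: making sure the identification $J = \{a \in A : qa = 0\} = pA^{**}\cap A$ is invoked correctly and that the various notions of ``closed'' and ``compact'' projection line up with what Lemma \ref{coco} and Theorem \ref{compgiv} require (in particular that for approximately unital $A$, ``compact'' in the sense of Section 6 agrees with ``closed and $qx = q$ for some $x \in A$'', which is exactly Lemma \ref{coco} together with Theorem \ref{compgiv}(2)). No deep new input is needed — the proposition is essentially an assembly of Lemma \ref{coco}, Theorem \ref{compgiv}, and Proposition \ref{1reg2}, specialized to the approximately unital setting where support projections of r-ideals have complements that are genuine closed projections.
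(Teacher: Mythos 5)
Your overall strategy is exactly the paper's: translate $(1-x)A \subset J$ into a statement about $q = p^\perp$ using the cai, then close the loop with Lemma \ref{coco} and Proposition \ref{1reg2}. However, there is a concrete error of sidedness in the one place you flagged as needing care, and you resolved it the wrong way. For an r-ideal $J$ with support projection $p$, the projection $p$ is the weak* limit of a \emph{left} cai for $J$ and $J = pA^{**}\cap A$, so elements $a \in J$ satisfy $pa = a$, equivalently $qa = 0$ --- which is what you correctly wrote in step (i). In your detailed computation you then switch to ``elements $a \in J$ satisfy $ap = a$, i.e.\ $aq = 0$'', which is false in general (take $A = M_2$, $p = E_{11}$, $J$ the first row: then $E_{12} \in J$ but $E_{12}p = 0 \neq E_{12}$). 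Consequently you derive $(1-x)q = 0$, i.e.\ $xq = q$, which neither follows from the hypothesis nor matches the hypothesis $qx = q$ of Lemma \ref{coco}. The correct computation is: $q(1-x)e_t = 0$ for all $t$ since $(1-x)e_t \in J$; letting $e_t \to 1$ weak* (multiplication in $A^{**}$ being separately weak* continuous, as it is inherited from a containing von Neumann algebra) gives $q(1-x) = 0$, i.e.\ $qx = q$, and then Lemma \ref{coco} applies directly. With that one side corrected your argument coincides with the paper's proof, which runs the chain $(1-x)A \subset J$ iff $p(1-x)A = (1-x)A$ iff $p(1-x) = 1-x$ iff $p^\perp x = p^\perp$, and then cites Lemma \ref{coco}. (Minor typo as well: you want $(1-x)q = 0$, not $(1-x)q = q$, to be equivalent to $xq = q$.)
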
 \begin{proof}
Let $p$ be  the support projection of $J$, and suppose that $x \in A$.
Then $(1-x) A \subset J$ iff
$p (1-x) A  = (1-x) A$ iff $p (1-x) = (1-x)$ iff $p^\perp  x = p^\perp$.
The result now follows from Lemma \ref{coco}.
 \end{proof}

{\bf Remark.}  1) \ In the last proposition, one may further choose
$x \in \frac{1}{2} {\mathfrak F}_A$ if $A$ is approximately unital \cite{BNII}.

2) \  Every r-ideal in a unital operator algebra $A$ is 1-regular by \cite[Theorem 1.2]{BRead}, but this
is not true if $A$ has a cai (this may be seen using Proposition \ref{1reg} and the
fact that some algebras have no compact projections \cite{BNII}).  We also remark that the first `iff' in
Proposition \ref{1reg} is
false with `approximately unital' removed (as one may see in three dimensional algebras of upper triangular matrices).

\begin{corollary}  \label{noc} Let $A$ be a (not necessarily approximately unital)
 operator algebra.  The following are equivalent:
\begin{enumerate} \item There exist no nonzero compact projections in $A^{**}$, \item
The spectral radius $r(x) < \Vert x \Vert$ for all $x \in A$, \item  The numerical radius
$\nu(x) < \Vert x \Vert$ for all $x \in A$, \item $\Vert 1  +x \Vert < 2$ for all $x \in {\rm Ball}(A)$,
\item $\overline{(1-x)A} = A$ for all $x \in {\rm Ball}(A)$.  \end{enumerate} \end{corollary}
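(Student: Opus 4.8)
The plan is to prove the equivalences through a cycle, exploiting the characterizations of peak and compact projections established earlier in the excerpt, together with Lemma \ref{abr}. The overall strategy is: (5) $\Leftrightarrow$ (4) is essentially immediate from the fact that $\overline{(1-x)A}=A$ iff the r-ideal $\overline{(1-x)A}$ has trivial complement, combined with Proposition \ref{1reg2} and the description of regular ideals; (4) $\Leftrightarrow$ (1) is the substantive step linking geometry of $\mathrm{Ball}(A)$ to the existence of compact projections; and (1) $\Leftrightarrow$ (2) $\Leftrightarrow$ (3) is a matter of relating spectral radius, numerical radius, and peak/compact projections, using that a peak projection $u(x)$ exists precisely when some power behaves well, and scaling.

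First I would show (1) $\Leftrightarrow$ (4). If some $x\in\mathrm{Ball}(A)$ has $\Vert 1+x\Vert=2$, then applying the construction around Lemma \ref{abr} and the peak-projection theory: write $y=\frac{1+x}{2}\in\frac12{\mathfrak F}_{A^1}$, so $\lim_n y^n=u(y)$ exists and is a nonzero projection (nonzero since $\Vert y\Vert=1$, by \cite[Corollary 3.3]{BNII}) which is closed with respect to $A^1$. By the last lines of the proof of \cite[Theorem 3.4 (2)]{BNII}, $u(y)=u(z)$ for $z=\frac{y+y^2}{2}$, and one checks $z\in\mathrm{Ball}(A)$ (here is where one must verify $z\in A$, not merely $A^1$: since $1+x\in A^1$ and $x\in A$, expanding $z$ gives a constant term, so one instead takes $z' = \frac{x+x^2}{4} + \cdots$ — I would follow the exact computation in \cite[Theorem 3.4 (2)]{BNII} to land in $A$). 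Thus $u(z')$ is a nonzero compact projection in $A^{**}$. Conversely, if $q$ is a nonzero compact projection, by Corollary \ref{noct} there is $a\in\mathrm{Ball}(A)$ with $aq=qa=q$, whence $\Vert 1+a\Vert\geq\Vert(1+a)q\Vert=\Vert 2q\Vert=2$.

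Next, (1) $\Leftrightarrow$ (2) $\Leftrightarrow$ (3): for (3) $\Rightarrow$ (4) note $\Vert 1+x\Vert\leq 1+\nu(x)$ trivially fails to give $<2$, so instead I would argue contrapositively at the level of $A^1$ — if $\nu(x)=\Vert x\Vert=1$ for some $x\in A$, then after rotating $x$ by a unimodular scalar so that a state witnesses the numerical radius near $1$, one passes to a peak projection for a suitable function of $x$; the cleanest route is to use that $\nu(x)=1=\Vert x\Vert$ forces, via the numerical-range/Cayley-transform machinery (Theorem \ref{oax} and Lemma \ref{abr}), the existence of a nonzero peak projection, hence a nonzero compact projection. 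For (2): $r(x)=\Vert x\Vert=1$ for some $x\in A$ means $\lim_n\Vert x^n\Vert^{1/n}=1$ while $\Vert x\Vert=1$; one shows $\lim_n x^n$ or a weak* cluster point thereof yields a nonzero peak projection $u(x)$ (recall from the excerpt that $u(x)$ is a peak projection whenever the weak* limit of $a^n$ exists for $a\in\mathrm{Ball}(A)$). The implications (1) $\Rightarrow$ (2) and (1) $\Rightarrow$ (3) go the other way: if $r(x)=\Vert x\Vert=1$ (resp. $\nu(x)=1$), produce a nonzero compact projection as just sketched, contradicting (1). Finally (4) $\Leftrightarrow$ (5): $\overline{(1-x)A}=A$ for all $x\in\mathrm{Ball}(A)$ says no proper $1$-regular r-ideal exists; by Proposition \ref{1reg2} and Proposition \ref{1reg} (applied in $A^1$ or directly) this is equivalent to there being no nonzero compact projection, i.e. (1), and we already have (1) $\Leftrightarrow$ (4).

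The main obstacle I expect is the passage from a scalar condition like $r(x)=\Vert x\Vert$ or $\nu(x)=\Vert x\Vert$ to an actual nonzero peak (hence compact) projection: one must ensure the relevant weak* limit of powers exists and is nonzero, and land the peaking element inside $A$ (not merely $A^1$), which requires care with constant terms exactly as in the proof of \cite[Theorem 3.4 (2)]{BNII}. The geometric equivalence (4) $\Leftrightarrow$ (1) and the ideal-theoretic equivalence (4) $\Leftrightarrow$ (5) should be routine given the cited results. I would organize the write-up as (1) $\Rightarrow$ (2) $\Rightarrow$ (3) $\Rightarrow$ (4) $\Rightarrow$ (5) $\Rightarrow$ (1), or a small star of implications through (1), whichever minimizes repetition of the ``produce a peak projection'' argument.
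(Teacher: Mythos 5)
Your treatment of the link between (1) and (4) is essentially sound and close to what the paper does: when $\Vert 1+x\Vert=2$ one takes $q=u(\frac{1+x}{2})$, a nonzero peak projection for $A^1$; the paper simply notes it lies in $A^{\perp\perp}$ by \cite[Proposition 6.9(i)]{BHN} and is closed with respect to $A^1$, which is all that the Section 6 definition of compactness requires, so your effort to relocate the peaking element inside $A$ (via the $\frac{a+a^2}{2}$ trick) is unnecessary, though Proposition \ref{cpeakthang}(1) would let you carry it out. Your converse, producing $a\in{\rm Ball}(A)$ with $aq=qa=q$ from Corollary \ref{noct} and observing $\Vert (1+a)q\Vert=2$, is a correct variant of the paper's route, which instead goes from a nonzero compact projection to the failure of (5) via Proposition \ref{1reg2}.

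The genuine gap is in (2) and (3). The paper does not prove the equivalence of (2), (3), (4), (5) at all: it cites \cite[Theorem 3.5 and Proposition 3.7]{ABS}, where these conditions are shown to be mutually equivalent and equivalent to every element of ${\rm Ball}(A)$ being quasi-invertible. Your substitute arguments do not work as sketched. For (2), the claim that $r(x)=\Vert x\Vert=1$ lets you form a nonzero peak projection from ``$\lim_n x^n$ or a weak* cluster point thereof'' fails: the weak* limit of $x^n$ need not exist when $r(x)=\Vert x\Vert$ (take $x=z$ in the disk algebra; $\int z^n\,d\mu$ diverges for the point mass at $-1$, so $(z^n)$ has no weak* limit in $C(\Tdb)^{**}$), and an arbitrary weak* cluster point of the powers need not be a projection. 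The peak-projection machinery in the excerpt is available only for $a\in\frac12{\mathfrak F}_{A^1}$ or when the limit of powers is assumed to exist, neither of which follows from $r(x)=\Vert x\Vert$. For (3), the appeal to ``numerical-range/Cayley-transform machinery'' is only a gesture; the actual passage from $\nu(x)=\Vert x\Vert$ to $\Vert 1+\lambda x\Vert=2$ for a suitable unimodular $\lambda$ requires a state/Hilbert-space argument, and the link from $r(x)=\Vert x\Vert$ to the rest is harder still. Unless you reprove these facts, you should cite \cite{ABS} as the paper does and confine your work to the two implications connecting (1) with the cycle.
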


\begin{proof}  If any of these five conditions hold, then it is easy to see that
$A$ contains no projections.
By \cite[Theorem 3.5 and Proposition 3.7]{ABS}
we have that (2), (3), (4), (5) are all equivalent, and are also  equivalent to
every element of Ball$(A)$ being quasi-invertible.  However
if $q$ was a nonzero compact projection for $A$, then $\{ a \in A : q a = 0 \}$ is not $A$, 
and so $\overline{(1-x)A} \neq A$ by Proposition \ref{1reg2} and its proof, where 
$q = qx$ with $x  \in {\rm Ball}(A)$.

Conversely, 
if $\Vert 1  +x \Vert < 2$ for $x \in {\rm Ball}(A)$, then $q = u(\frac{1+x}{2})$ is
a nonzero projection by e.g.\  \cite[Corollary 3.3]{BNII}, is in $A^{**}$ by e.g.\ 
\cite[Proposition 6.9(i)]{BHN}, and is closed in $(A^1)^{**}$ like all
peak projections.
\end{proof}

\bigskip

{\em Acknowledgements.}  We thank Sonia Sharma for several discussions, and Damon
Hay for reading a draft of the paper.  We thank M. Neal for discussions on
compactness of projections and the Urysohn lemma.
We are deeply grateful 
to the referee for carefully  reading the manuscript and making 
several  helpful stylistic suggestions,
 and especially for providing,  
and allowing us to include,
 the beautiful proof of what is now  Theorem \ref{sect}.   Also
the current proof of Theorem \ref{cls} is due 
to the referee, and is shorter than our original one.

\end{document}